\documentclass[11pt, twoside, leqno]{article}

\usepackage{amssymb,latexsym}
\usepackage{enumerate}
\usepackage{color}
\usepackage{amsmath, amsthm, amsfonts, amssymb, mathrsfs}
\pagestyle{myheadings} \markboth{Y.Chen, Y. Ding, G. Hong, J. Xiao}{Jump and
variational inequalities}

\newtheorem{theorem}{Theorem}[section]
\newtheorem{corollary}[theorem]{Corollary}
\newtheorem{lemma}[theorem]{Lemma}

\theoremstyle{definition}

\def\dint{\displaystyle\int}
\def\dsum{\displaystyle\sum}
\def\dsup{\displaystyle\sup}

\numberwithin{equation}{section}
\def\dfrac{\displaystyle\frac}

\numberwithin{equation}{section}

\frenchspacing

\textwidth=16cm \textheight=21cm
\parindent=16pt
\oddsidemargin=-0.3cm \evensidemargin=-0.3cm \topmargin=-0.5cm
\numberwithin{equation}{section}

\begin{document}
\title{{\LARGE\bf     {Some jump and variational inequalities for the Calder\'{o}n commutators and related operators}
 \footnotetext{\small
{{\it MR(2000) Subject Classification}:\ } 42B20, 42B25}
\footnotetext{\small {\it Keywords:\ }
Jump and variational inequalities; Calder\'{o}n commutators; pseudo-differential calculus; stopping times; Carleson measure }
\thanks {The research was supported by
NSF of China (Grant: 11471033, 11371057, 11571160, 11601396), NCET of China (Grant: NCET-11-0574), Thousand Youth Talents Plan  of China (Grant: 429900018-101150(2016)),  Funds for Talents of China (Grant: 413100002), the Fundamental Research Funds for the Central Universities (FRF-TP-12-006B, 2014KJJCA10) and SRFDP of China   (Grant: 20130003110003).}}}

\date{ }
\maketitle

\begin{center}
{\bf Yanping Chen \footnote{\small {Corresponding author.\ }}}\\
Department of Applied Mathematics,  School of Mathematics and Physics,\\
 University of Science and Technology Beijing,\\
Beijing 100083, The People's Republic of China \\
 E-mail: {\it yanpingch@ustb.edu.cn}
\vskip 0.1cm

{\bf Yong Ding}\\
School of Mathematical Sciences, Beijing Normal University,\\
Laboratory of Mathematics and Complex Systems (BNU),
Ministry of Education,\\
Beijing 100875,  The People's Republic of China \\
E-mail: {\it dingy@bnu.edu.cn}\vskip 0.1cm

{\bf Guixiang Hong}\\
School of Mathematics and Statistics, Wuhan University,\\
Wuhan 430072,  The People's Republic of China \\
E-mail: {\it guixiang.hong@whu.edu.cn}\vskip 0.1cm and

{\bf Jie Xiao}\\
Department of Mathematics and Statistics, Memorial University of Newfoundland,\\
St. Johns, Newfoundland and Labrador, Canada A1C 5S7\\
E-mail: {\it jxiao@mun.ca}

\end{center}

\begin{center}
\begin{minipage}{135mm}{\small
{\bf ABSTRACT\ }
In this paper, we establish jump and variational inequalities for  the Calder\'{o}n commutators, which are typical examples of non-convolution Calder\'on-Zygmund operators. For this purpose, we also show jump and variational inequalities for para-products and commutators from pseudo-differential calculus, which are of independent interest. New ingredients in the proofs involve identifying Carleson measures constructed from sequences of stopping times, in addition to many Littlewood-Paley type estimates with gradient.}
\end{minipage}
\end{center}
\vspace{0.3cm}

 \section{Introduction}\label{sect1}

Motivated by the modulus of continuity of Brownian motion, L\'epingle \cite{Lep76} established the first variational inequality for general martingales among many other interesting results. In \cite{PiXu88}, Pisier and Xu established implicitly the jump inequality (explicitly stated in Lemma 6.7 of \cite{JKRW98}), and then by real interpolation provided another proof of L\'epingle's variational inequality. The advantage of Pisier and Xu's approach is that it works also for vector-valued martingales.

\bigskip

Bourgain \cite{Bou89} is the first one who exploited L\'epingle's result to obtain corresponding variational estimates for the Birkhoff ergodic averages along subsequences of natural numbers and then directly deduce point-wise convergence results without previous knowledge that point-wise convergence holds for a dense subclass of functions, which are not available in some ergodic models. In particular, Bourgain's work \cite{Bou89} has initiated a new research direction in ergodic theory and harmonic
analysis. In \cite{JKRW98, JRW03, JRW00, CJRW2000, CJRW2002}, Jones and his collaborators systematically studied variational inequalities for ergodic averages and truncated singular integrals of homogeneous type. Since then many other publications came to enrich the literature on this subject (cf. e.g. \cite{GiTo04, LeXu2, DMT12, JSW08, MaTo, OSTTW12, Hon1}). Recently, several works on weighted as well as vector-valued variational inequalities in ergodic theory and harmonic analysis have also appeared (cf. e.g. \cite{MTX, KZ, HLP}); and several results on $\ell^p(\mathbb Z^d)$-estimates of $q$-variations for discrete operators of Radon type have also been established (cf. e.g. \cite{Kra14, MiTr14, MST15, MTZ14, Zor14}).

\bigskip

All the operators considered in the previous cited papers have nice symmetry properties, for instance, semigroup property or dilation invariance properties. So far as we know, it is still unknown whether jump and variational inequalities hold for all singular integrals of convolution type (see \cite{MST15}), let alone for all standard Calder\'on-Zygmund operators. However, in this paper, we manage to show jump and variational inequalities for the Calder\'{o}n commutators, which are typical examples of non-convolution Calder\'on-Zygmund operators. For this purpose, we first show jump and variational inequalities for para-products and commutators from pseudo-differential calculus, which are of independent interest.

\bigskip

The Calder\'{o}n commutators (see \cite{C1, C2}) originate from a representation of linear differential operators by means of singular integral operators, which is an approach to the uniqueness of the Cauchy problem for partial differential equations (see \cite{C3}).
Given a positive integer $m$, every linear partial differential operator $L$ of homogeneous order $m$ with bounded variable coefficients on Euclidean space $\mathbb R^n$ can be expressed as
$$ Lf=T\Lambda^mf,$$ where $\widehat{\Lambda f}=\varphi(\xi)\widehat{f}(\xi)$, $\varphi (\xi)$ is a positive infinitely differentiable
function such that $\varphi(\xi)=|\xi|$ if $|\xi|\ge 1,$
and $T$ is a singular integral operator
$$
Tf =\dint  |\xi|^{-m}\dsum_{|\gamma|=m}b_\gamma(x)(-i\xi)^\gamma e^{ix\cdot \xi}\widehat{f}(\xi) d\xi +\int r(x,\xi)e^{ix\cdot \xi}\widehat{f}(\xi) d\xi$$
with $\gamma$ being an multi-indices of non-negative integers and $|\gamma|=\gamma_1+\dotsm+\gamma_n$.
Let
$B$ be the operator given by the multiplication of the
Lipschitzian function $ b(x)$. For simplicity, let us consider the case $n=1$, let $H$ be the Hilbert transform, as it is well known,
this transform can be expressed as follows
$$ Hf(x)= -\frac{i}{2\pi}\int_{-\infty}^\infty sgn \xi e^{ix\cdot\xi}\widehat{f}(\xi)\,d\xi$$
and this makes it clear that $ B, H$ and $B H$ are operators of the type of the generalized
$T$ and the simplest of their kind. In order to show that  $ HB $ is of the same
type, since $HB=B H-[b, H],$ it would suffice to show that $[b, H]\frac{d}{dx}$ is bounded in $L^p(\Bbb R),\,1<p<\infty.$  Calder\'{o}n [\ref{C2}] introduced the first Calder\'{o}n commutator which is defined by
$$ [b, H\frac{d}{dx}]f(x):=\text{p.v.}\int_{-\infty}^\infty\frac{(-1)}{x-y}\frac{(b(x)-b(y))}{x-y}f (y)\,dy.$$The integral on the right, which in the case $b(x)=x$ reduces to the Hilbert transform,
is the one studied in [\ref{C1}].
Note that
$$[b, H]\frac{d}{dx}=[b, H\frac{d}{dx}]-H[b,\frac{d}{dx}]$$
and since the operator $[b,\frac{d}{dx}]$ is multiplication by $b'(x),$ which is a bounded function if $b(x)$ is Lipschitizian, $H[b,\frac{d}{dx}]$ is bounded in $L^p(\Bbb R)$ and the continuity of $[b, H]\frac{d}{dx}$ is equivalent to that of $[b, H\frac{d}{dx}].$ Thus, the role of the first Calder\'on commutator
in the theory of partial differential equations becomes apparent.
 Commutator $[b, H\frac{d}{dx}]$ also plays an important role in the theory of  Cauchy integral along Lipschitz curve in $\Bbb C$ and the Kato square root problem on $\Bbb R$ (see \cite{C3, Fe, Me, MC} for the details).

\bigskip


As Calder\'on did in \cite{C1}, there are large classes of commutators which are of independent interest in harmonic analysis. For $\varepsilon>0,$ suppose that  $\mathcal{C}_\varepsilon f$ is the truncated Calder\'{o}n commutator which is defined by
 \begin{equation}\label{Cal}
\mathcal{C}_\varepsilon f
(x)=\int_{|x-y|>\varepsilon}\cfrac{\Omega(x-y)}{|x-y|^{n+1}}\ (b(x)-b(y))f(y)dy,
 \end{equation}
  where  $\Omega$ is
homogeneous of degree zero, integrable on $\mathbf S^{n-1}$ (
the unit sphere in $\mathbb R^n$) and satisfies
\begin{equation}\label{mean zero}\int_{\mathbf S^{n-1}}\Omega(x')(x'_k)^N\,d\sigma(x')=0,\, k=1,\dots, n,\end{equation} for all integers $0\le N\le 1$. Then for $f\in C_0^\infty(\mathbb R^n),$ the  Calder\'{o}n commutator
\begin{align}\label{point pv of C}
\mathcal{C}f(x)=\lim_{\varepsilon\rightarrow0^+}\mathcal{C}_\varepsilon f(x), \ a.e. \ x\in\mathbb R^n.
\end{align}
Using the method of rotation, Calder\'{o}n [\ref{C1}] has shown the boundedness of the commutator $\mathcal{C}$ and as a consequence obtained the boundedness of the operators $[b,T]\nabla$ and $\nabla[b,T]$, where $T$ is a homogeneous singular integral operator with some symbol $K$ which can be defined similarly as $\mathcal{C}$: For $f\in C_0^\infty(\mathbb R^n)$
\begin{equation}\label{SIO}
Tf(x)=\lim_{\varepsilon\rightarrow0^+}T_\varepsilon f(x), \ a.e. \ x\in\mathbb R^n
\end{equation}
with
 \begin{equation}\label{tr of S}
T_\varepsilon
f(x)=\int_{|y|>\varepsilon}K(y)f(x-y)dy,
 \end{equation}
where  $K$ is homogeneous of degree $-n$, belongs to  $ L_{\rm loc}^1({\Bbb R}^n)$ and
 satisfies the cancelation condition
\begin{equation}\label{can of O}
\int_{\mathbf S^{n-1}}K(y')d\sigma(y')=0.
 \end{equation}
Later on, many authors made important progress on the Calder\'on commutators, one can consult \cite{Co, CM, CM1, Mu, SH, SH1, Y, Ta3, MMT, MuWh71, GH, CDH1}, among
numerous references, for its development and applications.

\bigskip

 That the point-wise principle value (\ref{point pv of C}) exists for all $f\in L^p(\mathbb R^n)$ follows from the maximal inequality which was established in \cite{MuWh71}. In the present paper, the variational inequality that we will show implies the maximal inequality due to \eqref{number contr ineq} below. Moreover our result provide quantitative information of the convergence.

\bigskip

In order to present our results in a precise way, let us fix some notations.
 Given a family of complex numbers $\mathfrak{a}=\{a_t: t\in\mathbb{R}_+\}$ and $0<q<\infty$, the strong and the weak $q$-variation norm of the family $\mathfrak{a}$ is defined respectively by
\begin{equation}\label{q-ver number family}
\|\mathfrak{a}\|_{V_q}=\sup\|(a_{t_k}-a_{t_{k-1}})_{k\geq1}\|_{\ell^{q}},
\end{equation}
and
\begin{equation}\label{weak q-ver number family}
\|\mathfrak{a}\|_{V_{q,\infty}}=\sup\|(a_{t_k}-a_{t_{k-1}})_{k\geq1}\|_{\ell^{q,\infty}}
\end{equation}
where the supremum runs over all increasing sequences $\{t_k:k\geq0\}$. Here $\ell^q$ (resp. $\ell^{q,\infty}$) denote the Lebesgue $L^q$ (resp. weak $L^q$) norm on the set of integers.
From the definition, it is quite clear that the following inequalities hold: For any $0<r<q<\infty$,
\begin{align}\label{embed}
\|\mathfrak{a}\|_{V_{q,\infty}}\leq \|\mathfrak{a}\|_{V_{q}}\leq \|\mathfrak{a}\|_{V_{r,\infty}}.
\end{align}
On the other hand, it is also trivial that
\begin{equation}\label{number contr ineq}
\|\mathfrak{a}\|_{L^\infty}:=\sup_{t\in\mathbb R_+}|a_t|
\le\|\mathfrak{a}\|_{V_q}+|a_{t_0}|\quad\text{for}\ \ 0<q<\infty,
\end{equation}
for some fixed $t_0$.

\bigskip

Via the definition of the strong and weak $q$-variation norm of a family of numbers, one may define the strong and the weak $q$-variation function $V_q(\mathcal F)$ and $V_{q,\infty}(\mathcal F)$ of a family $\mathcal F$ of functions.
Given a family of Lebesgue measurable functions $\mathcal F=\{F_t:t\in\mathbb{R}_+\}$ defined on $\mathbb{R}^n$, for fixed $x$ in $\mathbb{R}^n$, the value of the strong $q$-variation function $V_q(\mathcal F)$ of the family $\mathcal F$ at $x$ is defined by
\begin{equation}\label{defini of Vq(F)}
V_q(\mathcal F)(x)=\|\{F_t(x)\}_{t\in\mathbb{R}}\|_{V_q},\quad q>0;
\end{equation}
while the value of the weak $q$-variation function $V_{q,\infty}(\mathcal F)$ of the family $\mathcal F$ at $x$ is defined by
\begin{equation}\label{defini of Jq(F)}
V_{q,\infty}(\mathcal F)(x)=\|\{F_t(x)\}_{t\in\mathbb{R}}\|_{V_{q,\infty}},\quad q>0;
\end{equation}

\bigskip

 Suppose $\mathcal{A}=\{{A}_t\}_{t>0}$ is a family of operators  on $L^p(\Bbb R^n)\, (1\le p\le\infty)$. The related strong and weak $q$-variation operator are simply defined respectively as
$$V_q(\mathcal Af)(x)=\|\{A_t(f)(x)\}_{t>0}\|_{V_q},\quad\forall f\in L^p(\mathbb{R}^n)$$
and
$$V_{q,\infty}(\mathcal Af)(x)=\|\{A_t(f)(x)\}_{t>0}\|_{V_{q,\infty}},\quad\forall f\in L^p(\mathbb{R}^n). $$
It is easy to observe from the definition of $q$-variation norm that for any $x$ if $V_{q,\infty}(\mathcal Af)(x)<\infty$ for some $q<\infty$, then $\{A_t(f)(x)\}_{t>0}$ converges when $t\rightarrow0$ or $t\rightarrow\infty$. In particular, if $V_{q,\infty}(\mathcal Af)$ belongs to some function spaces such as $L^p(\mathbb{R}^n)$ or $L^{p,\infty}(\mathbb{R}^n)$, then the sequence converges almost everywhere without any additional condition. This is why mapping property of strong or weak $q$-variation operator is so interesting in ergodic theory and harmonic analysis. On the other hand, from \eqref{number contr ineq}, variational inequality is much stronger than corresponding maximal inequality. Namely, for any $f\in L^p(\Bbb R^n)$ and $x\in\Bbb R^n$, we have
 \begin{equation}\label{control of maxi opera}
A^\ast(f)(x)\le V_{q}(\mathcal{A}f)(x)\quad\text{for}\ \ q\ge1,
\end{equation}
where $A^\ast$ is the maximal operator defined by
$$A^\ast(f)(x):=\sup_{t>0}|{A}_t(f)(x)|$$ and thus is more interesting.

\bigskip

As we know for a family of Lebesgue measurable functions $\mathcal F=\{F_t(x):t\in\mathbb{R}_+\}$, there is another related notion called $\lambda$-jump function $N_\lambda(\mathcal F)$ whose value at $x$ is defined as the supremum over all $N$ such that there exist $t_0<t_1<t_2<\dotsc<t_N$ with
$$|F_{t_k}(x)-F_{t_{k-1}}(x)|>\lambda$$
for all $k=1,\dotsc, N$. It is easy to check that this function is related with the weak $q$-variation norm as follows
$$V_{q,\infty}(\mathcal F)(x)=\sup_{\lambda>0}\lambda (N_\lambda(\mathcal F)(x))^{1/q}.$$ We refer the reader to \cite{JSW08} for more information on $\lambda$-jump functions.

\bigskip

Now, we can formulate our main result as follows.

\bigskip

\begin{theorem}\label{thm:H}  Let $b\in Lip(\mathbb R^n)$ and $\mathscr C =\{\mathcal C_{\varepsilon} \}_{\varepsilon>0}$ where $\mathcal C_{\varepsilon} (f)$ are as in \eqref{Cal}  with $\Omega$ satisfying \eqref{mean zero}.
 If $\Omega\in L(\log^+\!\! L)^2(\mathbf S^{n-1})$, then the following jump inequality holds for $1< p<\infty,$ namely,
\begin{equation}\label{N12w}
\sup_{\lambda>0}\|\lambda\sqrt{N_\lambda(\mathscr C f)}\|_{L^p}\le
C_{p,n,\Omega}\|\nabla b\|_{L^\infty}\|f\|_{L^p}.\end{equation}
\end{theorem}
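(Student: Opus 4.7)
The strategy is to reduce the jump inequality for the family $\mathscr C$ to jump inequalities for three simpler families via the paraproduct/pseudo-differential decomposition classical in Calder\'on commutator theory, then to invoke the jump inequalities for para-products and pseudo-differential commutators established separately in this paper. Since the quasi-norm $\sup_{\lambda>0}\|\lambda\sqrt{N_\lambda(\cdot)}\|_{L^p}$ satisfies a quasi-triangle inequality (inherited from $\|\cdot\|_{V_{2,\infty}}$ via the identity $V_{2,\infty}(\mathcal F)(x)=\sup_\lambda\lambda\sqrt{N_\lambda(\mathcal F)(x)}$), it suffices to bound each summand in the decomposition separately.

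First, fix a radial $\phi\in C_c^\infty(\mathbb R^n)$ with $\phi\equiv 1$ on $\{|x|\le 1\}$ and $\mathrm{supp}\,\phi\subset\{|x|\le 2\}$, and split $\chi_{\{|x|>\varepsilon\}}=(1-\phi(x/\varepsilon))+(\phi(x/\varepsilon)-\chi_{\{|x|\le\varepsilon\}})$. The second piece yields a family whose kernel is supported in the annulus $\{\varepsilon\le|x|\le 2\varepsilon\}$; because $|b(x)-b(y)|\le \|\nabla b\|_\infty|x-y|$, it is dominated pointwise by $\|\nabla b\|_\infty M_\Omega f$, where $M_\Omega$ is the rough maximal operator associated to $\Omega$. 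As this dominating operator is independent of $\varepsilon$, it contributes to the jump quasi-norm only through \eqref{number contr ineq}, and its $L^p$-boundedness holds under the hypothesis $\Omega\in L(\log^+\!L)^2$. This reduces matters to the smoothly truncated family $\widetilde{\mathcal C}_\varepsilon$.

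Next, use $b(x)-b(y)=\int_0^1 \nabla b(x+s(y-x))\cdot(x-y)\,ds$, perform a Littlewood--Paley decomposition of $\nabla b$, and regroup into Bony's paraproduct form to obtain schematically
\begin{equation*}
\widetilde{\mathcal C}_\varepsilon f = \Pi_{\nabla b}\bigl(T^{(1)}_\varepsilon f\bigr) + T^{(2)}_\varepsilon\bigl(\Pi_{\nabla b} f\bigr) + R_\varepsilon f,
\end{equation*}
where the $T^{(i)}_\varepsilon$ are smoothly truncated homogeneous singular integrals with kernel of type $\Omega_k(x)/|x|^n$ (the $\Omega_k$ inheriting the required cancellation and Orlicz condition from $\Omega$), $\Pi_{\nabla b}$ is the Bony paraproduct with symbol $\nabla b$, and $R_\varepsilon$ is a pseudo-differential commutator remainder of order zero. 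The $T^{(i)}_\varepsilon$ pieces are handled by the rough-kernel jump inequality for homogeneous singular integrals in the spirit of \cite{JSW08, DMT12}; the $R_\varepsilon$ piece by the pseudo-differential-commutator jump inequality stated as an auxiliary result; and the paraproduct summand $\Pi_{\nabla b}(T^{(1)}_\varepsilon f)$ by the paraproduct jump inequality also stated separately. Combining these with the $L^p$-boundedness of $\Pi_{\nabla b}$ (operator norm $\lesssim \|\nabla b\|_\infty$) delivers \eqref{N12w}.

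The main obstacle is the paraproduct summand $\Pi_{\nabla b}(T^{(1)}_\varepsilon f)$: because the paraproduct is neither pointwise positive nor independent of the $\varepsilon$-parameter buried inside $T^{(1)}_\varepsilon$, one cannot simply factor $\Pi_{\nabla b}$ out of the $\lambda$-jump quasi-norm. This is precisely where the new ingredients announced in the introduction must enter. One should realise the paraproduct through a wavelet/Haar expansion whose square coefficients form a Carleson measure of intensity $\|\nabla b\|_\infty^2$, using stopping times to isolate the scales at which $\nabla b$ oscillates; coupled with Littlewood--Paley square-function estimates in which a gradient is allowed to fall on the kernel of $T^{(1)}_\varepsilon$, this produces the single factor $\|\nabla b\|_\infty$ on the right-hand side of \eqref{N12w}. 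The final technical step, passing from smooth $\Omega$ to $\Omega\in L(\log^+\!L)^2(\mathbf S^{n-1})$, should follow by an Orlicz-type decomposition of $\Omega$ in the manner of the rough-kernel variational-inequality literature.
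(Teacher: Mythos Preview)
Your proposal contains a genuine gap at the very first reduction step. You split $\chi_{\{|x|>\varepsilon\}}$ into a smooth piece and an annular piece supported in $\{\varepsilon\le|x|\le 2\varepsilon\}$, observe that the annular commutator satisfies $|E_\varepsilon f(x)|\le C\|\nabla b\|_\infty M_\Omega f(x)$, and conclude that because the dominating function is independent of $\varepsilon$ this piece ``contributes to the jump quasi-norm only through \eqref{number contr ineq}''. This is false: a uniform pointwise bound on a family $\{a_\varepsilon\}$ does \emph{not} control $\sup_\lambda \lambda\sqrt{N_\lambda(\{a_\varepsilon\})}$ or $\|a\|_{V_{2,\infty}}$ (consider $a_\varepsilon$ oscillating between $0$ and $1$). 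Moreover, \eqref{number contr ineq} goes the wrong way --- it bounds the sup by the variation, not conversely. The annular family still oscillates as $\varepsilon$ varies and requires its own variational estimate; in the paper this is exactly the content of the short $2$-variation bound \eqref{short variation singular}, proved in Section~7 via the $V_2$ interpolation trick $\|\mathfrak a\|_{V_2}\le \|\mathfrak a\|_{L^2}^{1/2}\|\mathfrak a'\|_{L^2}^{1/2}$ together with several Littlewood--Paley/commutator lemmas (Lemmas~\ref{Deltacom}--\ref{CD1}).

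Relatedly, you never perform the long/short variation split \eqref{dyadic jump singular}--\eqref{short variation singular}, which is the standard and essential mechanism (from \cite{JSW08}) for handling a continuously indexed family of truncations. The paper's proof does exactly this: the dyadic piece \eqref{dyadic jump singular} is decomposed as $\mathcal C_{2^k}=\mathcal C^1_k+\mathcal C^2_k-\mathcal C^3_k$ using $\phi_k*T^1$, and it is only for the low-frequency piece $\mathcal C^1_k=[b,\Phi_k]T^1+\Phi_k[b,T^1]$ that the pseudo-differential commutator Theorem~\ref{Phi1} enters (and behind it the para-product Theorem~\ref{Phi0} with its Carleson/stopping-time machinery). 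The remaining pieces $\mathcal C^2_k,\mathcal C^3_k$ are handled by square-function estimates with explicit decay in the Littlewood--Paley parameter, and the passage to $\Omega\in L(\log^+L)^2$ is obtained by the layer-cake decomposition $\Omega=\sum_m\Omega_m$. Your Bony-paraproduct scheme for $\widetilde{\mathcal C}_\varepsilon$ is plausible in spirit but is only stated schematically, and --- as you yourself note --- the key obstacle (the paraproduct acting on a family that still depends on $\varepsilon$) is not resolved; the paper sidesteps this by working with the dyadic family $\{\mathcal C_{2^k}\}_k$ first and then handling the continuous parameter via the short variation.
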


\bigskip

Quite remarkably, on the one hand, as in \cite{PiXu88, Bou89, JSW08}, the jump inequality \eqref{N12w} implies all the strong $q$-variational inequality ($2<q<\infty$) by a real interpolation argument (see for instance Lemma 2.1 of \cite{JSW08}), and thus implies all the weak $q$-variational inequality and maximal inequality by \eqref{embed} and \eqref{number contr ineq}. For this reason, we will not state explicitly any $q$-variational inequality in the present paper. On the other hand, the strong $2$-variational inequality, and thus any $q$-variational inequality ($q<2$) may fail, see \cite{Qia98} and \cite{AJS98} for related information. However, it is still unknown whether the weak $2$-variational inequality holds, that is, whether the estimate \eqref{N12w} is still true if the supremum over $\lambda$ can be put inside the norm.

\bigskip

Consequently, we have the following Calder\'{o}n-type estimates.

\begin{corollary}\label{thm:H2}  Let $1<p<\infty$. Let $b\in Lip(\mathbb R^n)$ and $\mathcal T_b =\{[b,T_{\varepsilon}]\}_{\varepsilon>0}$ where $T_{\varepsilon}$ are as in \eqref{tr of S}.
Suppose that  $K(x)$ have locally integrable first-order derivatives in $|x|>0$
and suppose that $K(x)$ and the partial derivatives  of $K(x)$ belong locally to $L(\log^+\!\!L)^2$ in $|x|>0$. If $f$ is continuously differentiable and have compact support, then the following jump inequality holds  namely,
$$
\sup_{\lambda>0}\|\lambda\sqrt{N_\lambda(\mathcal T_b (\nabla f))}\|_{L^p}\le
C_{p,n,K}\|\nabla b\|_{L^\infty}\|f\|_{L^p}.$$ Furthermore, if for $\varepsilon>0,$ $[b,T_{\varepsilon}]f $ has first-order derivatives
in $L^p({\Bbb R}^n)$, then $$\sup_{\lambda>0}\|\lambda\sqrt{N_\lambda(\nabla \mathcal T_b  f )}\|_{L^p}\le
C_{p,n,K}\|\nabla b\|_{L^\infty}\|f\|_{L^p}.$$
\end{corollary}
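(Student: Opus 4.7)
The plan is to reduce both inequalities of Corollary \ref{thm:H2} to the jump inequality for Calder\'on commutators in Theorem \ref{thm:H}, combined with the classical jump inequalities for truncated singular integrals of homogeneous type and for spherical averages.

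For the first inequality, I fix $j\in\{1,\ldots,n\}$, write
\[
[b, T_\varepsilon](\partial_j f)(x) = \int_{|x-y|>\varepsilon} K(x-y)\bigl(b(x)-b(y)\bigr)\partial_{y_j}f(y)\,dy,
\]
and integrate by parts in $y_j$ on the region $\{|x-y|>\varepsilon\}$; this is legitimate because $f\in C_c^1$ and both $K$ and $\partial_j K$ are locally $L^1$ away from the origin. The resulting identity is
\[
[b, T_\varepsilon](\partial_j f) = \widetilde{\mathcal C}_\varepsilon^{(j)} f + T_\varepsilon\bigl((\partial_j b)f\bigr) + R_\varepsilon^{(j)} f,
\]
where $\widetilde{\mathcal C}_\varepsilon^{(j)}$ is a Calder\'on commutator with kernel $(\partial_j K)(x-y)(b(x)-b(y))$, and the boundary contribution is
\[
R_\varepsilon^{(j)} f(x) = \varepsilon^{-1}\int_{S^{n-1}} K(\theta)\bigl(b(x)-b(x-\varepsilon\theta)\bigr) f(x-\varepsilon\theta)\,\theta_j\,d\sigma(\theta).
\]
I then estimate the three pieces. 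For $\widetilde{\mathcal C}_\varepsilon^{(j)}$: since $\partial_j K$ is homogeneous of degree $-n-1$, one has $(\partial_j K)(z) = \Omega_j(z/|z|)|z|^{-n-1}$ with $\Omega_j\in L(\log^+L)^2(S^{n-1})$ by hypothesis, and after subtracting the $L^2(S^{n-1})$-projection of $\Omega_j$ onto the span of $\{1,\theta_1,\ldots,\theta_n\}$ in order to enforce \eqref{mean zero}, Theorem \ref{thm:H} applies; the subtracted low-order pieces produce Riesz-type convolution commutators with $b$ whose jump inequalities are classical. For $T_\varepsilon((\partial_j b)f)$: the hypothesis $K\in L(\log^+L)^2(S^{n-1})$ together with \eqref{can of O} places this in the Campbell--Jones--Reinhold--Wierdl framework, giving a contribution $\le C\|\nabla b\|_\infty\|f\|_p$. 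For $R_\varepsilon^{(j)} f$: the Lipschitz bound $|b(x)-b(x-\varepsilon\theta)|\le \|\nabla b\|_\infty\varepsilon$ dominates it by $\|\nabla b\|_\infty$ times a spherical average of $f$ with weight $|K(\theta)\theta_j|$, whose jump inequality lies in the scope of \cite{JSW08}.

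For the second inequality, the additional hypothesis that $[b, T_\varepsilon] f$ has $L^p$ first derivatives lets me invoke (after an approximation argument on $f$) the algebraic identity
\[
\partial_j [b, T_\varepsilon] f = (\partial_j b)\,T_\varepsilon f + [b, T_\varepsilon](\partial_j f) + \bigl(b\,[\partial_j, T_\varepsilon] f - [\partial_j, T_\varepsilon](bf)\bigr),
\]
where $[\partial_j, T_\varepsilon]$ is again purely a boundary/spherical-average operator. The first summand is $\|\nabla b\|_\infty$ times $T_\varepsilon f$, controlled by the jump inequality for $T_\varepsilon$; the second summand is precisely the first inequality; the bracketed correction is a pair of spherical averages weighted by Lipschitz factors, and therefore is again handled by \cite{JSW08}.

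The main obstacle will be the Calder\'on commutator step above: the kernel $\partial_j K|_{S^{n-1}}$ does not satisfy the cancellation conditions \eqref{mean zero} for free (a short computation via the divergence theorem gives only $\int_{S^{n-1}} \partial_j K\,d\sigma = -\int_{S^{n-1}} K(\theta)\theta_j\,d\sigma$, which need not vanish), so the projection correction is unavoidable and one must carefully identify the lower-order operators it produces (Riesz transforms composed with multiplication by $\nabla b$) and appeal to independently known jump inequalities for them. Verifying uniform-in-$\varepsilon$ control of the spherical-average boundary terms is the other technical point, but it falls within existing theory.
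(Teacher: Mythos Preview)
Your strategy coincides with the paper's: both inequalities are reduced to Theorem~\ref{thm:H} applied with $\Omega=\nabla K|_{\mathbf S^{n-1}}$, together with the jump inequality for the truncations $\{T_\varepsilon g\}_{\varepsilon>0}$ (from \cite{DHL}) applied to $g=(\partial_j b)f$ in the first case and to $g=f$ (carrying the outer factor $\nabla b$) in the second. For the second inequality the paper uses exactly the split $\nabla[b,T_\varepsilon]f=(\nabla b)\,T_\varepsilon f+[b,\nabla T_\varepsilon]f$, matching yours.

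Where you diverge is in verifying that $\nabla K|_{\mathbf S^{n-1}}$ satisfies the moment conditions \eqref{mean zero}. The paper handles this directly on the Fourier side: from $\widehat{\nabla K}(\xi)=i\xi\,\widehat K(\xi)$ it reads off $\widehat{\nabla K}(0)=0$, and from the product rule for $(x_k\nabla K)^\wedge$ it obtains $(x_k\nabla K)^\wedge(0)=0$, so \eqref{mean zero} holds and Theorem~\ref{thm:H} applies with no correction. Accordingly the paper writes only the two-term identity $[b,T_\varepsilon]\nabla=-T_\varepsilon[b,\nabla]+[b,\nabla T_\varepsilon]$ and never displays a spherical boundary term. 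Your route instead integrates by parts explicitly (producing $R_\varepsilon^{(j)}$) and then proposes to subtract the projection of $\partial_j K|_{\mathbf S^{n-1}}$ onto $\mathrm{span}\{1,\theta_1,\dots,\theta_n\}$ to force \eqref{mean zero}. This adds work, and the step you flag as the main obstacle is genuinely incomplete as written: the constant-mode correction yields the family $\varepsilon\mapsto\int_{|x-y|>\varepsilon}|x-y|^{-(n+1)}(b(x)-b(y))f(y)\,dy$, which is \emph{not} a Riesz-type commutator and is not covered by any off-the-shelf jump inequality; calling it ``classical'' oversells what is available. If you stay with your decomposition you would have to treat that piece (and its interaction with $R_\varepsilon^{(j)}$) by a separate argument; the paper's Fourier computation is precisely what bypasses this.
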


As in most of the previously cited paper (in particular see Lemma 1.3 in \cite{JSW08}), we shall show estimate \eqref{N12w} by showing separately the corresponding inequalities for the long and short variation. That is, we are reduced to prove for $1<p<\infty$
\begin{align}\label{dyadic jump singular}
\sup_{\lambda>0}\|\lambda\sqrt{N_\lambda(\{\mathcal C_{2^k}f\}_k)}\|_{L^p}\le
C_{p,n,\Omega}\|\nabla b\|_{L^\infty}\|f\|_{L^p}
\end{align}
and
\begin{align}\label{short variation singular}
\|S_2(\mathscr Cf)\|_{ L^p}\le
 C_{p,n,\Omega}\|\nabla b\|_{L^\infty}\|f\|_{L^p},
\end{align}
where
 $$
S_2(\mathscr Cf)(x)=\bigg(\sum_{j\in\mathbb Z}[V_{2,j}(\mathscr Cf)(x)]^2\bigg)^{1/2},
$$
with
 $$
V_{2,j}(\mathscr Cf)(x)=\bigg(\sup_
{2^j\leq t_0<\cdots<t_N<2^{j+1}}{\sum_{k=0}^{N-1}}|\mathcal C_{t_{k+1}}f(x)-\mathcal C_{t_k}f(x)|^2\bigg)^{1/2}.
 $$

Although we encounter some difficulties in proving \eqref{short variation singular} (see for instance some lemmas in Section 5), the novelty of the proof lies in showing \eqref{dyadic jump singular}. We need two results which are of independent interest.

\bigskip

The first auxiliary result is variational inequality for some kind of para-product (see \cite{DMT12} for related results), whose formulation is motivated by the results on maximal operators of Duoandikoetcea and Rubio de Francia {\cite{DR86}} and para-products \cite{GR}. Let $\mu$ be a compactly supported finite Borel measure on ${\Bbb R}^n,$ that is, $\mu$ is absolutely continuous on
the Lebesgue measure $dx$, its Radon-Nikod\`{y}m derivative is a nonnegative Lebesgue measurabe
function on ${\Bbb R}^n$ with compact support set. We consider dilates $\mu_k$  of $\mu$  defined with respect to a group of dilations $\{2^k\}_{k\in \Bbb Z}$ defined by \begin{equation*} \int f(x)\,d\mu_k(x):=\int_{{\Bbb R}^n}f(2^kx)\,d\mu.\end{equation*}  Let $\Upsilon_kf(x)=\mu_k\ast f(x)$ for $k\in \Bbb Z.$ A well known fact is, if $\mu$ satisfies the Fourier transform:
\begin{align}\label{condition}|\widehat{\mu}(\xi)|\le C|\xi|^{-\alpha},\end{align} for some $\alpha>0,$ then  the maximal operator defined as $M_\mu f(x)=\sup_{k\in \Bbb Z}|\Upsilon_kf(x)|$ is bounded on $L^p({\Bbb R}^n),\,1<p<\infty$. Further, if the Radon-Nikod\`{y}m derivative $\zeta$ of $\mu$ satisfies the stronger condition:\begin{align}\label{condition1}\int_{{\Bbb R}^n}|\zeta(x+y)-\zeta(x)|\,dx\le C|y|^\tau\end{align} for some $\tau>0$, then  $M_\mu$  is of weak type $(1,1)$(see \cite{S1993}). Clearly \eqref{condition1} implies \eqref{condition}.  Suppose that $\phi(x)\in \mathscr{S}({\Bbb R}^n)$. Denote  $\Phi_k f(x)=\phi_k\ast f(x),$  where $\phi_k(x)=2^{-kn}\phi(2^{-k}x).$  Give a function $b$ on ${\Bbb R}^n$, we define the operator as follows: \begin{equation}\label{ub} \mathscr U_b f=\{ (\Phi_kb)(\Upsilon_kf)\}_{k\in \Bbb Z},\end{equation} for $f\in L_{\rm loc}^1({\Bbb R}^n).$
 They are not operators of convolution type. We note that the transpose of the family of operators $\mathscr U_b$ is formally given by the identity
\begin{equation*} \mathscr U_b^t f=\{ \Upsilon_k(f\Phi_kb)\}_{k\in \Bbb Z}.\end{equation*}
We are now ready to state the first auxiliary result as follows.

\begin{theorem}\label{Phi0} Suppose that $b\in L^\infty({\Bbb R}^n).$ Let $\mathscr U_b $ be defined as in \eqref{ub} with $\mu$ being a compactly supported finite Borel measure on ${\Bbb R}^n$ and  $\phi(x)\in \mathscr{S}({\Bbb R}^n)$.

{\rm (i)} If $\mu$ satisfies \eqref{condition}, then for $1<p<\infty$ and $f\in L^p({\Bbb R}^n)$,
we have$$\sup_{\lambda>0}\|\lambda\sqrt{N_\lambda(\mathscr U_b  f)}\|_{L^p}\le C_{p,n}\| b\|_{L^\infty}\|f\|_{L^p}.$$

{\rm (ii)}  In addition for $p=1$, if $\mu$ satisfies \eqref{condition1}, we have $$\sup_{\lambda>0}\|\lambda\sqrt{N_\lambda(\mathscr U_b  f)}\|_{L^{1,\infty}}\le C_{p,n}\| b\|_{L^\infty}\|f\|_{L^1}.$$
\end{theorem}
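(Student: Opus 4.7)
The plan is to decompose the family $\{(\Phi_k b)(\Upsilon_k f)\}_{k\in\mathbb Z}$ as a pointwise product of $b$ with the smooth averages of $f$, minus a correction term whose $2$-variation is small. Writing
\[
(\Phi_k b)(\Upsilon_k f)=b\cdot\Upsilon_k f-(b-\Phi_k b)\Upsilon_k f=:B_k(x)-D_k(x),
\]
and using the elementary subadditivity $\sup_\lambda\lambda\sqrt{N_\lambda(A)}\le 2\sup_\mu\mu\sqrt{N_\mu(B)}+2\sup_\mu\mu\sqrt{N_\mu(D)}$ for $A=B-D$, the problem reduces to handling the two families separately.

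For the main piece $\{B_k\}$, the identity $|B_k(x)-B_j(x)|=|b(x)|\,|\Upsilon_k f(x)-\Upsilon_j f(x)|$ gives the pointwise bound $\sup_\lambda\lambda\sqrt{N_\lambda(\{B_k\})(x)}\le\|b\|_{L^\infty}\sup_\mu\mu\sqrt{N_\mu(\{\Upsilon_k f\})(x)}$, so the task collapses to a jump inequality for the smooth dyadic averages $\{\mu_k\ast f\}_k$. I would prove the latter by comparison with the dyadic martingale $\mathbb E_k f$: the Pisier--Xu result handles $\{\mathbb E_k f\}_k$, and for the remainder $\{\Upsilon_k f-\mathbb E_k f\}_k$ I dominate the jump quantity by its $V_2$-norm, telescope, and bound the two resulting square functions (a Littlewood--Paley function built from the mean-zero family $\{\mu_k-\mu_{k-1}\}_k$, and the standard martingale square function) in $L^p$ using the Fourier decay \eqref{condition}.

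For the correction $\{D_k\}$, the pointwise bound $\sup_\lambda\lambda\sqrt{N_\lambda(\{D_k\})}\le V_2(\{D_k\})\le\bigl(\sum_k|D_k-D_{k-1}|^2\bigr)^{1/2}$ together with the telescoping identity
\[
D_k-D_{k-1}=(\Phi_{k-1}b-\Phi_k b)\,\Upsilon_k f+(b-\Phi_{k-1}b)(\Upsilon_k f-\Upsilon_{k-1}f)
\]
splits the estimate into two square functions. The second summand is pointwise dominated by $C\|b\|_{L^\infty}$ times a Littlewood--Paley piece of $f$, and $\bigl\|(\sum_k|(\mu_k-\mu_{k-1})\ast f|^2)^{1/2}\bigr\|_{L^p}\lesssim\|f\|_{L^p}$ by \eqref{condition}. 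The first is the main obstacle: one needs
\[
\Bigl\|\bigl(\sum_k|\Phi_k b-\Phi_{k-1}b|^2\,|\Upsilon_k f|^2\bigr)^{1/2}\Bigr\|_{L^p}\lesssim\|b\|_{L^\infty}\|f\|_{L^p}.
\]
At $p=2$ this is immediate: the family $\{|\Phi_k b-\Phi_{k-1}b|^2\,\delta_{2^k}\,dx\}$ is a Carleson measure of norm $\lesssim\|b\|_{L^\infty}^2$ (since $\phi_k-\phi_{k-1}$ has mean zero and $L^\infty\subset BMO$), and $\sup_k|\Upsilon_k f|\lesssim Mf$ by \eqref{condition}. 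For general $p\in(1,\infty)$ my plan is the stopping-time/Carleson-measure argument flagged in the abstract: at each level select a good set on which $|\Upsilon_k f|\lesssim Mf$, control the exceptional part via the level distribution, and iterate.

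For part (ii), a Calder\'on--Zygmund decomposition $f=g+\sum_j h_j$ at height $\lambda$ reduces the endpoint $p=1$ estimate to (a) the $p=2$ bound from part (i) applied to $g$, and (b) an off-diagonal estimate on $\sum_j\Upsilon_k h_j$ outside dilates of the CZ cubes, using the mean-zero property $\int h_j=0$ together with the smoothness hypothesis \eqref{condition1} (exactly what upgrades $M_\mu$ to a weak-$(1,1)$ operator and yields a H\"older-type bound on the translates of $\mu$). The chief remaining technical point is to push the stopping-time/Carleson machinery from part (i) through the CZ framework at the endpoint.
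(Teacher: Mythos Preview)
Your decomposition $(\Phi_k b)(\Upsilon_k f) = b\,\Upsilon_k f - (b-\Phi_k b)\Upsilon_k f =: B_k - D_k$ is appealing, but both halves hit genuine obstructions.

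\textbf{The correction piece $D_k$ rests on a false inequality.} You assert $V_2(\{D_k\})\le\bigl(\sum_k|D_k-D_{k-1}|^2\bigr)^{1/2}$. This is not true: for $q>1$ the $V_q$ norm is \emph{not} dominated by the $\ell^q$ norm of consecutive increments. Already for the scalar sequence $(0,1,2)$ the right-hand side equals $\sqrt2$ while $V_2=2$ (take the two-term partition). The valid pointwise bound is $V_2(\{D_k\})\le 2(\sum_k|D_k|^2)^{1/2}$, but that would require $\sum_k|b-\Phi_k b|^2\,\delta_{2^k}\,dx$ to be a Carleson measure, and it is not: $I-\Phi_k$ has no cancellation, and for bounded $b$ of compact support one has $|b-\Phi_kb|\approx|b|$ for all large $k$, so the sum diverges. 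Your telescoped splitting of $D_k-D_{k-1}$ therefore does not control $V_2(\{D_k\})$.

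\textbf{The main piece $B_k$ needs a stronger fact than the one being proved.} Because $b(x)$ is a \emph{variable} scalar, $N_\lambda(\{B_k\})(x)=N_{\lambda/|b(x)|}(\{\Upsilon_k f\})(x)$, so bounding $\|\lambda\sqrt{N_\lambda(\{B_k\})}\|_{L^p}$ uniformly in $\lambda$ forces you to control
\[
\bigl\|\sup_\mu \mu\sqrt{N_\mu(\{\Upsilon_k f\})}\bigr\|_{L^p}=\|V_{2,\infty}(\{\Upsilon_k f\})\|_{L^p},
\]
i.e.\ the jump inequality with the supremum \emph{inside} the norm. The theorem only asserts the sup-outside version, and the paper explicitly flags the sup-inside question as open. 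You would have to establish it separately for $\{\Upsilon_k f\}$ before your reduction goes through.

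The paper avoids both issues by a different decomposition. After a high/low frequency splitting of $\mu$ it compares with conditional expectations:
\[
\Phi_k b\,\Gamma_k f=\Phi_k b\,(\Gamma_k f-\mathbb E_k f)+(\Phi_k b-\mathbb E_k b)\,\mathbb E_k f+\mathbb E_k b\,\mathbb E_k f.
\]
The first two terms are controlled by honest square functions, using Carleson measures built from $\Phi_k-\mathbb E_k$ (which \emph{does} have cancellation, unlike your $I-\Phi_k$). The product term $\mathbb E_k b\,\mathbb E_k f$ is the crux: for each fixed $\lambda$ the paper constructs a sequence of stopping times $\{t_i\}$ (Lemma~\ref{lem:finding stop time}) so that $\lambda\sqrt{N_\lambda}$ is dominated by the square function along $\{t_i\}$; since $\{\mathbb E_{t_i}f\}$ is again a martingale, Burkholder--Gundy together with a stopping-time Carleson lemma (Lemma~\ref{Phib1}) closes the $L^2$ estimate. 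The $\lambda$-dependence of the stopping times is precisely what sidesteps the sup-inside trap your $B_k$ reduction falls into. The extension to $p\neq 2$ then proceeds by Calder\'on--Zygmund for $1<p\le 2$ and by a duality/interpolation argument for $p>2$, not by the stopping-time mechanism you sketch for the Carleson piece.
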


The proof of Theorem \ref{Phi0} involves identifying two Carleson measures constructed from sequences of conditional expectations, one of which is in turn constructed from sequences of stopping times, see below Lemma \ref{Phib}, \ref{Phib1} and \ref{lem:finding stop time}.

\bigskip

The second  auxiliary result we need is the variational inequalities for commutators  of pseudo differential calculus with Lipschitz functions.   Since the 1960s, the theory of pseudo differential operators has played an important role in many exciting and deep investigations into linear PDE (see \cite{Fe1,  CM, B, Ho, St, Ta0, Ta2, AT, Me, MC}).

\begin{theorem}\label{Phi1} For $k\in \Bbb Z,$ let $\Phi_k$ be defined as Theorem \ref{Phi0}. For  $b\in Lip({\Bbb R}^n),$ set $\mathscr F_b =\{[b, \Phi_k]  \}_{k}$. Suppose that $f$ is continuously differentiable and has compact support.

{\rm (i)} Then for $1<p<\infty,$ we have $$\sup_{\lambda>0}\|\lambda\sqrt{N_\lambda(\mathscr F_b (\nabla f))}\|_{L^p}\le C_{p,n}\|\nabla b\|_{L^\infty}\|f\|_{L^p}$$ and
$$\sup_{\lambda>0}\|\lambda\sqrt{N_\lambda( \nabla \mathscr F_bf)}\|_{L^p}\le C_{p,n}\|\nabla b\|_{L^\infty}\|f\|_{L^p}.$$

{\rm (ii)}  In addition for $p=1$ we have $$\sup_{\lambda>0}\|\lambda\sqrt{N_\lambda(\mathscr F_b (\nabla f))}\|_{L^{1,\infty}}\le C_{p,n}\|\nabla b\|_{L^\infty}\|f\|_{L^1}$$ and $$\sup_{\lambda>0}\|\lambda\sqrt{N_\lambda( \nabla \mathscr F_bf)}\|_{L^{1,\infty}}\le C_{p,n}\|\nabla b\|_{L^\infty}\|f\|_{L^1}.$$
\end{theorem}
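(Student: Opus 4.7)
The plan is to reduce both jump inequalities in Theorem~\ref{Phi1} to Theorem~\ref{Phi0}, together with the base jump inequality for the Schwartz-convolution family $\{\Phi_k g\}_k$ at dyadic scales (the latter following from Theorem~\ref{Phi0} applied with $b\equiv1$, after writing $\phi\in\mathscr S$ as a superposition of compactly supported bumps satisfying \eqref{condition}, respectively \eqref{condition1} for the endpoint $p=1$). First I would verify the commutator identity
\[
[b,\Phi_k](\partial_j f)=\partial_j[b,\Phi_k]f-[\partial_j b,\Phi_k]f,
\]
which shows that the two inequalities in each of (i) and (ii) are equivalent modulo the family $\{[\partial_j b,\Phi_k]f\}_k=\{\partial_j b\cdot\Phi_k f-\Phi_k(\partial_j b\cdot f)\}_k$; since $\partial_j b\in L^\infty$, this correction is controlled by the base inequality. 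It therefore suffices to prove the first inequality in each part.

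A second integration by parts, combined with the Lipschitz formula $b(x)-b(y)=\int_0^1\nabla b(sx+(1-s)y)\,ds\cdot(x-y)$, yields
\[
[b,\Phi_k](\partial_j f)(x)=\int_0^1\!\!\int_{\mathbb R^n} \xi_k(x-y)\cdot\nabla b\big(sx+(1-s)y\big)\,f(y)\,dy\,ds+\Phi_k\big((\partial_j b)f\big)(x),
\]
where $\xi(z)=z\,\partial_j\phi(z)\in\mathscr S$ is vector-valued (the $2^{-k}$ from $\partial_j\phi_k$ cancels the $(x-y)$ from the Lipschitz expansion). Writing $\nabla b(sx+(1-s)y)=\nabla b(x)+[\nabla b(sx+(1-s)y)-\nabla b(x)]$ splits the main integral into the principal term $\nabla b(x)(\xi_k\ast f)(x)$, whose jump norm is pointwise $\leq|\nabla b(x)|\,V_{2,\infty}(\{\xi_k\ast f\}_k)(x)$ and so $\leq C\|\nabla b\|_\infty\|f\|_p$, plus a remainder $E_k f$.

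For $E_k f$, I would insert a smooth dyadic mollifier $P_k$ with compactly supported bump at scale $2^k$ and decompose
\[
\nabla b(sx+(1-s)y)-\nabla b(x)=[P_k\nabla b(sx+(1-s)y)-P_k\nabla b(x)]+(\nabla b-P_k\nabla b)(sx+(1-s)y)-(\nabla b-P_k\nabla b)(x).
\]
The last summand contributes the pointwise product $-(\nabla b-P_k\nabla b)(x)(\xi_k\ast f)(x)$, which is of the para-product form $\mathscr U_{\nabla b} f$ covered by Theorem~\ref{Phi0}. The first summand, by the scale-$2^k$ smoothness of $P_k\nabla b$, is of size $\lesssim 2^{-k}|x-y|\,\|\nabla b\|_\infty$; absorbing the factor $2^{-k}|x-y|$ into $\xi_k$ yields another dyadic Schwartz kernel and a second para-product family covered by Theorem~\ref{Phi0}. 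For the middle summand, the change of variable $u=sx+(1-s)y$ re-indexes to the dyadic scale $(1-s)2^k$ and once more reduces to a para-product family handled by Theorem~\ref{Phi0}, albeit with a bookkeeping argument to control the $s$-dependence uniformly.

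The main obstacle is that, since $\nabla b\in L^\infty$ only, the oscillation $\nabla b-P_k\nabla b$ admits no pointwise control; the reductions above to Theorem~\ref{Phi0} hinge on the fact that $\{|\nabla b-P_k\nabla b|^2\}_k$ generates a Carleson measure of norm $\lesssim\|\nabla b\|_\infty^2$ on the upper half-space. This is the Littlewood-Paley gradient-type estimate announced in the introduction, and its proof follows the same stopping-time construction that drives Theorem~\ref{Phi0}. The weak-type endpoint $p=1$ requires in addition a Calder\'on-Zygmund decomposition of $f$ at height $\lambda$ together with the stronger smoothness hypothesis \eqref{condition1} on the underlying measure, through which the bad part of $f$ can be absorbed into the Carleson-measure bound.
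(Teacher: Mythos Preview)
Your reduction of the two inequalities to one another via $[b,\Phi_k](\partial_j f)=\partial_j[b,\Phi_k]f-[\partial_j b,\Phi_k]f$ is fine, as is the integration-by-parts identity. The genuine gap is in the treatment of the remainder $E_kf$.

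The kernel of $E_k$ is
\[
K_k(x,y)=\xi_k(x-y)\cdot\int_0^1\bigl[\nabla b(sx+(1-s)y)-\nabla b(x)\bigr]\,ds,
\]
and since $\nabla b$ is merely bounded, $K_k$ has no H\"older regularity in $y$ (or in $x$): the standard Littlewood--Paley kernel conditions fail. Your mollifier decomposition does not repair this. For the ``first summand'' you only produce the pointwise bound $|P_k\nabla b(sx+(1-s)y)-P_k\nabla b(x)|\lesssim 2^{-k}|x-y|\|\nabla b\|_\infty$, and then claim that absorbing $2^{-k}|x-y|$ into $\xi_k$ yields ``another para-product family.'' But a pointwise size bound on the integrand gives only $|E_k^{(1)}f(x)|\lesssim\|\nabla b\|_\infty\,(|\tilde\xi_k|\ast|f|)(x)$, which is a single $k$-uniform maximal bound and carries no jump or square-function control across $k$; there is no cancellation left to sum. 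The ``middle summand'' via the change of variable $u=sx+(1-s)y$ is not a para-product either: after the substitution the kernel is $\xi_k((x-u)/(1-s))$ multiplied by the Jacobian, and the family is indexed by the \emph{continuous} parameter $(1-s)2^k$, not the dyadic scale $2^k$; Theorem~\ref{Phi0} does not apply, and the ``bookkeeping'' you allude to would have to supply the missing almost-orthogonality from scratch. Finally, the Carleson-measure reduction you sketch for the ``last summand'' $(\nabla b-P_k\nabla b)(x)(\xi_k\ast f)(x)$ is essentially the content of Case~2 of Theorem~\ref{Phi0}, but with $\Phi_k$ replaced by $I-P_k$, which is not covered by the hypotheses there (it is not convolution with a Schwartz function).

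The paper's argument avoids all of this by a $T1$-type subtraction: write
\[
[b,\nabla\Phi_k]f=P_kf+([b,\nabla\Phi_k]1)\,\Phi_kf,\qquad P_k:=[b,\nabla\Phi_k]-([b,\nabla\Phi_k]1)\,\Phi_k.
\]
The point is that the kernel of $P_k$ involves $b(x)-b(y)$ and $\phi_k\ast\nabla b$, never a bare value of $\nabla b$; since $b$ is Lipschitz and $\phi_k\ast\nabla b$ is automatically smooth at scale $2^k$, one checks directly that $P_k$ satisfies the standard kernel conditions \eqref{ChJ1}--\eqref{ChJ2} with constants $C\|\nabla b\|_\infty$, and $P_k1=0$ by construction. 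Lemma~\ref{ChJ} (Christ--Journ\'e) then gives $\|(\sum_k|P_kf|^2)^{1/2}\|_{L^p}\le C\|\nabla b\|_\infty\|f\|_{L^p}$ and the weak $(1,1)$ analogue, which dominates the jump. The remaining piece $([b,\nabla\Phi_k]1)\Phi_kf=-\Phi_k(\nabla b)\,\Phi_kf$ is exactly of the form $\mathscr U_{\nabla b}f$ in Theorem~\ref{Phi0}. Your decomposition, by contrast, leaves $\nabla b$ evaluated at the interior point $sx+(1-s)y$, where no regularity is available, and this is why the subsequent steps cannot be closed.
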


The paper is organized as follows. In Section 2,   some key lemmas  will be introduced for the proof of  \ref{Phi0}.
In Section 3 and Section 4,  we give the proof of Theorem  \ref{Phi0} and  Theorem  \ref{Phi1}, respectively.  In Section 5, we give some lemmas for the proof of Theorem \ref{thm:H}. Section 6 and   Section 7 are devoted to  the proof of  Theorem \ref{thm:H}. In Section 8, we give the proof of Corollary \ref{thm:H2}.
For $p\ge 1,$ $p'$ denotes the conjugate
exponent of $p$, that is, $p'=p/(p-1).$  Throughout this paper, the letter $``C\,"$ will stand for a positive
constant which is independent of the essential variables and not
necessarily the same one in each occurrence.

\section{Some key lemmas}

Let us begin with some lemmas and their proofs, which will play a key role in proving Theorem \ref{Phi0}.
We borrow some notations and results from \cite[pp.6724]{JSW08}. For $j\in\mathbb Z$ and $\beta=(m_1,\cdots,m_n)\in\mathbb Z^n$,  we denote the dyadic cube $\prod_{k=1}^n(m_k2^j,(m_k+1)2^j]$ in $\mathbb R^n$ by $Q_\beta^j$, and the set of all dyadic cubes with side-length $2^j$ by $\mathcal D_j$. The conditional expectation of a local integrable $f$ with respect to $\mathcal D_j$ is given by
$$
\mathbb E_jf(x)=\sum_{Q\in \mathcal D_j}\frac1{|Q|}\int_{Q}f(y)dy\cdot\chi_{Q}(x)
$$
for all $j\in\mathbb Z$.

\begin{lemma}\label{Phib} Let $\phi\in {\mathscr
S}({\Bbb R}^n)$ and $\widehat{\phi}(0)=1.$ For $k\in \Bbb Z,$ denote by $\Phi_k f(x)=\phi_k\ast f(x),$ where $\phi_k(x)=2^{-kn}\phi(2^{-k}x).$  Let  $\mathbb E_k$ be given above and  $b\in BMO({\Bbb R}^n).$ Let $\delta_{2^{k}}(t)$ be Dirac mass at the point $t=2^{k}.$ Then there is a constant $C>0$ such that $$d\nu(x,t)=\dsum_{k\in \Bbb Z}|\Phi_kb(x)-\mathbb E_k b(x)|^2\,dx\,\delta_{2^k}(t)$$ is a Carleson measure on $\Bbb R_{+}^{n+1}$ with norm at most $C\|b\|_{\ast}^2.$
 \end{lemma}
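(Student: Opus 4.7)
The plan is to verify the Carleson condition in its dyadic form: for every dyadic cube $B\in\mathcal D_{j_0}$,
\[
\sum_{k\le j_0}\int_{B}\bigl|(\Phi_k-\mathbb E_k)b(x)\bigr|^2\,dx \;\le\; C\,\|b\|_{\ast}^{2}\,|B|,
\]
which, upon passing to general cubes via a standard covering argument, delivers the Carleson norm bound. Write $T_k:=\Phi_k-\mathbb E_k$; its kernel $K_k(x,y)=\phi_k(x-y)-|Q(x,k)|^{-1}\chi_{Q(x,k)}(y)$ has $\int K_k(x,y)\,dy=0$ (since $\widehat\phi(0)=1$ forces $\int\phi=1$) and enjoys Schwartz decay at scale $2^k$. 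In particular both $T_k$ and $T_k^{\ast}$ annihilate constants.

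Following the classical BMO--Carleson recipe, I would fix $B\in\mathcal D_{j_0}$, set $B^{\ast}:=5B$, and split $b=c+b_1+b_2$ with $c:=b_{B^{\ast}}$, $b_1:=(b-c)\chi_{B^{\ast}}$, $b_2:=(b-c)\chi_{(B^{\ast})^c}$. The constant $c$ is killed by $T_k$. For $b_1$ I would invoke the global square-function estimate
\[
\sum_{k\in\mathbb Z}\|T_k f\|_{L^2(\mathbb R^n)}^2 \;\le\; C\,\|f\|_{L^2}^{2}\qquad (f\in L^2(\mathbb R^n))\qquad(\ast)
\]
together with the John--Nirenberg bound $\|b_1\|_{L^2}^2\le C\|b\|_{\ast}^2|B^{\ast}|\le C'\|b\|_{\ast}^2|B|$. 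For $b_2$, whenever $x\in B$ and $2^k\le\ell(B)$ the dyadic cube $Q(x,k)$ is contained in $B^{\ast}$, so $\mathbb E_k b_2(x)=0$ and $T_k b_2(x)=\Phi_k b_2(x)$; Schwartz decay of $\phi$ combined with the standard BMO growth $|b_{2^{\ell}B^{\ast}}-b_{B^{\ast}}|\lesssim \ell\,\|b\|_{\ast}$ then yields $|\Phi_k b_2(x)|\lesssim \|b\|_{\ast}\bigl(2^k/\ell(B)\bigr)^{N-1}$ for any $N\ge 1$, whose square sums geometrically in $k\le j_0$ to $C\|b\|_{\ast}^{2}|B|$.

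The core technical step is $(\ast)$. My plan is to expand $f$ via the martingale differences $D_j:=\mathbb E_j-\mathbb E_{j+1}$ (which satisfy $\sum_j\|D_j f\|_{L^2}^2\le\|f\|_{L^2}^2$) and decompose
\[
T_k f \;=\; \sum_{j<k}\Phi_k D_j f \;+\; \sum_{j\ge k}(\Phi_k-I)D_j f.
\]
For $j<k$, the mean-zero property of $D_j f$ on each parent cube of $\mathcal D_{j+1}$ played against the Lipschitz smoothness of $\phi_k$ yields a Schur-type bound $\|\Phi_k D_j\|_{L^2\to L^2}\lesssim 2^{-(k-j)}$. For $j\ge k$, the piecewise-constant structure of $D_j f$ on cubes of $\mathcal D_j$ combined with a direct kernel estimate produces the symmetric bound $\|(\Phi_k-I)D_j\|_{L^2\to L^2}\lesssim 2^{-(j-k)}$. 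Young's convolution inequality on $\ell^2(\mathbb Z)$ together with martingale orthogonality then delivers $(\ast)$.

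The main obstacle is the second Schur estimate, $\|(\Phi_k-I)D_j\|_{L^2\to L^2}\lesssim 2^{-(j-k)}$ for $j\ge k$. Unlike the regime $j<k$, where cancellation stems from the mean-zero of $D_j f$ played against a smooth kernel, one must here exploit the contracted action of $\Phi_k-I$ on functions that are piecewise constant at the coarser scale $2^j\ge 2^k$, with no Fourier-side orthogonality available (since $\mathbb E_k$ is not a Fourier multiplier). The remedy is a careful direct kernel estimate, tracking the effect of $\Phi_k$ only near the boundaries of dyadic cubes in $\mathcal D_j$, where the cancellation $\int\phi_k=1$ can be used via a first-order Taylor expansion to extract the gain $2^{-(j-k)}$.
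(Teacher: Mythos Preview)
Your proposal follows essentially the same route as the paper: split $b$ into a constant, a near part supported on a dilate of the test cube, and a far part; kill the constant, control the near part by the global square-function bound $\sum_k\|(\Phi_k-\mathbb E_k)f\|_{L^2}^2\le C\|f\|_{L^2}^2$, and control the far part by observing that $\mathbb E_k b_2$ vanishes on the test cube (dyadic nesting) while $\Phi_k b_2$ is handled by Schwartz decay against BMO growth. The paper simply cites \cite{JSW08} for the square-function estimate $(\ast)$, whereas you rederive it by almost orthogonality with the martingale differences $D_j$; one caveat is that your claimed Schur bound $\|(\Phi_k-I)D_j\|_{L^2\to L^2}\lesssim 2^{-(j-k)}$ for $j\ge k$ is too optimistic---the boundary-layer heuristic gives only $2^{-(j-k)/2}$ (there is no Taylor expansion available here since $D_jf$ is piecewise constant, not smooth), but this weaker decay still sums in Young's inequality on $\ell^2(\mathbb Z)$ and the argument goes through unchanged.
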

{\emph{Proof.}} For a cube $Q$ in ${\Bbb R}^n$ we let $Q^*$ be the cube with the same center and orientation whose side length is $100\sqrt{n}\ell(Q)$, where $\ell(Q)$ is the side length of $Q.$ Fix a cube $Q$ in ${\Bbb R}^n,$ split $b$ as
$$b=(b-b_Q)\chi_{Q^*}+(b-b_Q)\chi_{(Q^*)^c}+b_Q.$$ Let $T(Q)=Q\times(0,\ell(Q)).$ Since $\Phi_kb_Q=b_Q$ and $\mathbb E_k b_Q=b_Q,$ then $$\Phi_kb_Q-\mathbb E_k b_Q=0.$$
Thus,
$$\begin{array}{cl}\nu(T(Q))=\dsum_{2^{k}\le \ell(Q)}\dint_{Q}|\Phi_k(b)(x)-\mathbb E_k(b)(x)|^2\,dx\le 2\Sigma_1+2\Sigma_2,\end{array}$$ where $$\Sigma_1=\dsum_{k\in \Bbb Z}\dint_{{\Bbb R}^n}|\Phi_k((b-b_Q)\chi_{Q^*})(x)-\mathbb E_k((b-b_Q)\chi_{Q^*})(x)|^2\,dx$$ and $$\Sigma_2=\dsum_{2^{k}\le \ell(Q)}\dint_{Q}\Phi_k((b-b_Q)\chi_{(Q^*)^c})(x)-\mathbb E_k((b-b_Q)\chi_{(Q^*)^c})(x)|^2\,dx.$$
Then\begin{align}\Sigma_1&\le C\dint_{Q^*}|b(x)-b_Q|^2\,dx\le C|Q|\| b\|_{\ast}^2 ,\end{align} where in the first inequality we have used that $$\bigg\|\bigg(\dsum_{k\in \Bbb Z}|\Phi_k(g)-\mathbb E_k(g)|^2\bigg)^{1/2}\bigg\|_{L^2}\le C\|g\|_{L^2}$$(see \cite{JSW08}).
For $\Sigma_2,$ we have
$$\begin{array}{cl}\Sigma_2&=\dsum_{2^{k}\le \ell(Q)}\dint_{Q}|\Phi_k((b-b_Q)\chi_{(Q^*)^c})(x)-\mathbb E_k((b-b_Q)\chi_{(Q^*)^c})(x)|^2\,dx\\&\le C\dsum_{2^{k}\le \ell(Q)}\dint_{Q}|\Phi_k((b-b_Q)\chi_{(Q^*)^c})(x)|^2\,dx+C\dsum_{2^{k}\le \ell(Q)}\dint_{Q}|\mathbb E_k((b-b_Q)\chi_{(Q^*)^c})(x)|^2\,dx.\end{array}$$
Since $\phi(x)\le \frac{1}{(1+|x|)^{n+\delta}}$ for some $\delta>1$,  then by the same argument of \cite{GR, GR1}, we get $$\begin{array}{cl}\dsum_{2^{k}\le \ell(Q)}\dint_{Q}|\Phi_k((b-b_Q)\chi_{(Q^*)^c})(x)|^2\,dx\le C|Q|\| b\|_{\ast}^2.\end{array}$$
Recall that
$$\begin{array}{cl}
\mathbb E_kf(x)=\dsum_{{\widetilde{Q}}\in \mathcal D_k}\dfrac1{|\widetilde{Q}|}\dint_{\widetilde{Q}}f(y)dy\cdot\chi_{\widetilde{Q}}(x).
\end{array}$$Then we get
$$\begin{array}{cl}\dsum_{2^{k}\le \ell(Q)}\dint_{Q}|\mathbb E_k((b-b_Q)\chi_{(Q^*)^c})(x)|^2\,dx&\le\dsum_{2^{k}\le \ell(Q)}\dsum_{{\widetilde{Q}}\in \mathcal D_k}\dint_{Q}|\frac1{|\widetilde{Q}|}\int_{\widetilde{Q}}(b-b_Q)\chi_{(Q^*)^c}(y)dy|^2\cdot\chi_{\widetilde{Q}}(x)\,dx\\&=\dsum_{2^{k}\le \ell(Q)}\dsum_{{\widetilde{Q}}\in \mathcal D_k}\dint_{Q\cap\widetilde{Q}}|\frac1{|\widetilde{Q}|}\int_{\widetilde{Q}\cap(Q^*)^c}(b(y)-b_Q)dy|^2\,dx.\end{array}$$
 If $\widetilde{Q}\cap {(Q^*)}^c\neq \varnothing$, since $\ell(\widetilde{Q})=2^k$ and $\ell(Q^*)= 100\sqrt{n}\ell(Q)\ge 100\sqrt{n} 2^k=100\sqrt{n}\ell(\widetilde{Q}),$ then we get $$\widetilde{Q}\cap Q=\varnothing.$$ Therefore, either $\widetilde{Q}\cap {(Q^*)}^c=\varnothing$ or $\widetilde{Q}\cap {(Q^*)}^c\neq \varnothing$, we can get
\begin{align}\label{E}\dsum_{2^{k}\le \ell(Q)}\dint_{Q}|\mathbb E_k((b-b_Q)\chi_{(Q^*)^c})(x)|^2\,dx=0.\end{align}Together,$$\begin{array}{cl}\Sigma_2\le C|Q|\| b\|_{\ast}^2.\end{array}$$ Then combined this with $\Sigma_1,$ we get $$\nu(T(Q))\le C|Q|\| b\|_{\ast}^2.$$ This says that $$d\nu(x,t)=\dsum_{k\in \Bbb Z}|\Phi_kb(x)-\mathbb E_k b(x)|^2\,dx\,\delta_{2^k}(t)$$ is a Carleson measure on $\Bbb R_{+}^{n+1}$ with norm at most $C\|b\|_{\ast}^2.$\qed

\begin{lemma}\label{Phib1} For $j\in \Bbb Z,$ let $\mathbb E_j$ be given above and  $b\in BMO({\Bbb R}^n).$ Let $\delta_{t_{k}}(t)$ be Dirac mass at the point $t=t_{k}.$ Then there is a constant $C>0$ such that $$d\nu(x,t)=\dsum_{k\ge 0}|\mathbb E_{t_{k+1}}b(x)-\mathbb E_{t_{k}} b(x)|^2\,dx\,\delta_{t_{k}}(t)$$ is a Carleson measure on $\Bbb R_{+}^{n+1}$ with norm at most $C\|b\|_{\ast}^2,$ where $\{t_k\}_{k\geq0}$ is any sequence of decreasing stopping times and the bound does not depend on the stopping
times.
 \end{lemma}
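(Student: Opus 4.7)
The plan is to follow verbatim the template of Lemma \ref{Phib}. Fix an arbitrary cube $Q\subset\mathbb R^n$, let $Q^*$ be its $100\sqrt n$-fold enlargement and $T(Q)=Q\times(0,\ell(Q))$, and split $b=(b-b_Q)\chi_{Q^*}+(b-b_Q)\chi_{(Q^*)^c}+b_Q$. The constant $b_Q$ is annihilated by every difference $\mathbb E_{t_{k+1}}-\mathbb E_{t_k}$, so $\nu(T(Q))\le 2\Sigma_1+2\Sigma_2$, where $\Sigma_1$ is the contribution of the local piece $g=(b-b_Q)\chi_{Q^*}$ (with the sum extended to all $k\ge 0$ and integration over $\mathbb R^n$) and $\Sigma_2$ is the contribution of the far piece $h=(b-b_Q)\chi_{(Q^*)^c}$ integrated over $Q$ and summed only over those $k$ with $t_k\le\ell(Q)$.

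The estimate for $\Sigma_1$ comes from $L^2$-orthogonality of martingale differences adapted to stopping times. Since $\{t_k\}_{k\ge 0}$ is decreasing, the dyadic scales refine in $k$, so the associated $\sigma$-algebras $\mathcal F_{t_k}$ are increasing in $k$, and $\{\mathbb E_{t_k}g\}_{k\ge 0}$ is a genuine martingale whose differences are mutually orthogonal in $L^2$. This yields
\[
\sum_{k\ge 0}\|\mathbb E_{t_{k+1}}g-\mathbb E_{t_k}g\|_{L^2}^2\le\|g\|_{L^2}^2,
\]
a bound that does not depend on the stopping times. Combined with the John--Nirenberg inequality and $|Q^*|\simeq|Q|$, this gives
\[
\Sigma_1\le\|g\|_{L^2}^2=\int_{Q^*}|b-b_Q|^2\,dx\le C|Q|\|b\|_{\ast}^2.
\]

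For $\Sigma_2$ we use the same geometric fact that drove the corresponding estimate in Lemma \ref{Phib}. Take any $x\in Q$ and any $k$ with $t_k\le\ell(Q)$; the dyadic cube $\widetilde Q$ of side $t_k$ containing $x$ then satisfies $\ell(\widetilde Q)=t_k\le\ell(Q)$, so $\widetilde Q\subset Q^*$. Hence $\mathbb E_{t_k}((b-b_Q)\chi_{(Q^*)^c})(x)=0$, and since $t_{k+1}\le t_k\le\ell(Q)$ the same vanishing holds for $\mathbb E_{t_{k+1}}$. Therefore $\Sigma_2=0$, and combining with the bound on $\Sigma_1$ gives $\nu(T(Q))\le C|Q|\|b\|_{\ast}^2$ uniformly in $\{t_k\}$, which is the Carleson condition.

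The main technical care required is in setting up the filtration $\mathcal F_{t_k}$ associated to the (possibly $x$-dependent) stopping times so that the classical $L^2$-orthogonality inequality for martingale differences applies with constant one and independently of $\{t_k\}$. Once this is arranged via Doob's optional sampling---essentially a relabeling so that finer $\sigma$-algebras correspond to larger indices, which is automatic from $t_k$ decreasing---the rest of the argument is a routine transcription of the proof of Lemma \ref{Phib}, with the bound $\|(\sum_k|\Phi_k g-\mathbb E_k g|^2)^{1/2}\|_{L^2}\le C\|g\|_{L^2}$ replaced by the stopping-time square function inequality above.
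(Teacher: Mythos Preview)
Your proof is correct and follows essentially the same route as the paper: the same local/far splitting of $b$, the martingale square-function bound for $\Sigma_1$ (the paper phrases this as Burkholder--Gundy, you invoke $L^2$-orthogonality directly---identical in $L^2$), and the same geometric vanishing for $\Sigma_2$. One small notational slip: in the paper's conventions $t_k$ is an integer dyadic index, so the relevant cube $\widetilde Q$ has side length $2^{t_k(x)}$ and the summation constraint is $2^{t_k}\le\ell(Q)$, not $t_k\le\ell(Q)$; your argument goes through verbatim once this is corrected.
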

{\emph{Proof.}}
The proof is essentially similar to  Lemma \ref{Phib}. More precisely,   we  need to estimate in $\Sigma_1$ with $\Phi_k((b-b_Q)\chi_{Q^*})(x)-\mathbb E_k((b-b_Q)\chi_{Q^*})(x)$ replaced by $\mathbb E_{t_{k+1}}((b-b_Q)\chi_{Q^*})(x)-\mathbb E_{t_{k}}((b-b_Q)\chi_{Q^*})(x)$. The desired result follows from $$\Big\|(\sum_{k\ge 0}|\mathbb E_{t_{k+1}}(g)-\mathbb E_{t_{k}}(g)|^2)^{1/2}\Big\|_{L^2}\le C\|g\|_{L^2}$$
due to Burkholder-Gundy inequality since $\{\mathbb E_{t_{k}}(g)\}_{k\geq0}$ forms a new martingale (see for instance \cite{PiXu88}). In $\Sigma_2$, we replace $\Phi_k((b-b_Q)\chi_{(Q^*)^c})(x)-\mathbb E_k((b-b_Q)\chi_{(Q^*)^c})(x)$ with $\mathbb E_{t_{k+1}}((b-b_Q)\chi_{(Q^*)^c})(x)-\mathbb E_{t_{k}}((b-b_Q)\chi_{(Q^*)^c})(x)$. Even though stopping times are maps from $\mathbb R^n$ to integers, we can still use the same arguments used in proving \eqref{E}, and conclude that
$$\begin{array}{cl}\dint_{Q}\dsum_{2^{t_{k}}\le \ell(Q)}|\mathbb E_{t_{k+1}}((b-b_Q)\chi_{(Q^*)^c})(x)|^2\,dx=0\end{array}$$  and $$\begin{array}{cl}\dint_{Q}\dsum_{2^{t_{k}}\le \ell(Q)}|\mathbb E_{t_{k}}((b-b_Q)\chi_{(Q^*)^c})(x)|^2\,dx=0.\end{array}$$
Let us explain briefly the second identity. The first identity follows similarly. We first write the left hand side as
$$\dsum_{k\ge 0}\dint_{Q}\chi_{2^{t_{k}}\le \ell(Q)}|\mathbb E_{t_{k}}((b-b_Q)\chi_{(Q^*)^c})(x)|^2\,dx.$$
Then we claim the integrand equal zero. Indeed, for any $x\in Q$,
$$\mathbb E_{t_{k}}((b-b_Q)\chi_{(Q^*)^c})(x)=\frac{1}{|Q(t_{k}(x))|}\dint_{Q(t_{k}(x))\cap(Q^*)^c} (b(y)-b_Q)\,dy$$
where $Q(t_{k}(x))$ is the unique dyadic cube containing $x$ with side-length equal to $2^{t_{k}(x)}$. Then $\ell(Q(t_{k}(x)))\leq \ell(Q)$ implies $Q(t_{k}(x))\cap(Q^*)^c=\varnothing.$

\qed

\section{Proof of Theorem \ref{Phi0}}

 We may assume $\int\,d\mu \neq 0$ since otherwise by the easy fact $\ell^2$ embeds into $\ell^{2,\infty}$, $\lambda\sqrt{N_\lambda(\mathscr U_bf)(x)}$ is pointwisely dominated by the square function $C\|b\|_{L^\infty}(\sum_{k\in \Bbb Z}|\mu_k\ast f(x)|^2)^{1/2}$, and known bounds from \cite{DR86} apply.
Therefore we may normalized $\mu$ so that $\int\,d\mu=1.$ Let $\omega$ be a smooth function with compact support such that $\int_{{\Bbb R}^n}\omega(x)\,dx=1$ and decomposes $\mu=\omega\ast\mu+(\delta_0-\omega)\ast \mu$  where $\delta_0$ is the Dirac mass at $0.$ This in turn decompose $\Phi_kb\Upsilon_kf$ into low and high frequency families $\mathcal{L}=\{\mathcal{L}_k\}$ and $\mathcal{H}=\{\mathcal{H}_k\},$ where $\mathcal{L}_kf(x)=\Phi_kb(x)(\omega\ast\mu)_k\ast f(x)$ and $\mathcal{H}_kf(x)=\Phi_kb(x)[\mu\ast(\delta_0-\omega)]_k\ast f(x).$ By the quasi-triangle inequality, it suffices to bound $\lambda\sqrt{N_\lambda(\mathcal{L}f)}$ and $\lambda\sqrt{N_\lambda(\mathcal{H}f)}$ separately.
Since $\mu\ast(\delta_0-\omega)$ has vanishing mean value and satisfies condition \eqref{condition}, we recall from \cite{DR86} that the square function $$g(f)(x)=\big(\dsum_{k\in \Bbb Z}\mathcal{H}_kf(x)|^2\big)^{1/2}$$ satisfies $$\|g(f)\|_{L^p}\le C\|f\|_{L^p}$$ for $1<p<\infty.$ Furthermore, if $\mu$ satisfies the stronger hypothesis \eqref{condition1}, we can also get weak type $(1,1) $ bounds  for $g(f)$. The easy fact $\ell^2$ embeds into $\ell^{2,\infty}$ implies  $$\lambda\sqrt{N_\lambda(\mathcal{H} f)(x)}\le C\|b\|_{L^\infty}g(f)(x),$$ so matters are reduced to bounding  $\lambda\sqrt{N_\lambda(\mathcal{L}f)}$. We need to prove that  for $1<p<\infty$,
\begin{equation}\label{p}
\|\lambda\sqrt{N_\lambda(\mathcal{L}f)}\|_{L^p}\le C\| {b}\|_{L^\infty}\|f\|_{L^p}
\end{equation}  and \begin{equation}\label{weak}\alpha|\{x\in {\Bbb R}^n: \lambda\sqrt{N_\lambda(\mathcal{L}f)(x)}>\alpha\}|\le C\|b\|_{L^\infty}\|f\|_{L^1}\end{equation} uniformly in $\lambda>0.$
Denote by $\Gamma_kf(x)=(\omega\ast\mu)_k\ast f(x).$ In the following, we will divided the proof into two cases: Case 1, $\Phi_k1\neq 0;$  Case 2, $\Phi_k1= 0.$

\textbf{
Case 1, $\Phi_k1\neq 0.$} By normalization, $\Phi_k1$ can be assumed to $1$.
Then write
\begin{align}\label{decompose}\Phi_kb\Gamma_kf&=\Phi_kb(\Gamma_kf-\mathbb E_kf)+(\Phi_kb-\mathbb E_kb)\mathbb E_kf+\mathbb E_kb\mathbb E_kf\\&\nonumber:=W_k^1f+W_k^2f+W_k^3f.\end{align}
By subadditivity,
\begin{equation}\nonumber
\lambda\sqrt{N_\lambda(\mathcal{L}f)}\le C\lambda\sqrt{N_{\lambda/3}(\{W_k^1f\}_k)}+C\lambda\sqrt{N_{\lambda/3}(\{W_k^2f\}_k)}+C\lambda\sqrt{N_{\lambda/3}(\{W_k^3f\}_k)}.
\end{equation}
To bound  $\lambda\sqrt{N_\lambda(\mathcal{L}f)},$  we first need to prove $L^2$ norm of the above three parts and then weak $(1,1)$-norm of $\lambda\sqrt{N_\lambda(\mathcal{L}f)}$. For $W_k^1f$,
by Lemma 3.2 in \cite{JSW08}, we have for $1<p<\infty,$ \begin{align}\label{w1p}\bigg\|\lambda\sqrt{N_\lambda(\{W_k^1f\}_k)}\bigg\|_{L^p}&\le C\bigg\|\bigg(\dsum_{k\in \Bbb Z}|\Phi_kb(\Gamma_kf-\mathbb E_kf)|^2\bigg)^{1/2}\bigg\|_{L^p}\\&\le \nonumber C\|\Phi_kb\|_{L^\infty}\bigg\|\bigg(\dsum_{k\in \Bbb Z}|\Gamma_kf-\mathbb E_kf|^2\bigg)^{1/2}\bigg\|_{L^p}\\&\le \nonumber C\|b\|_{L^\infty}\|f\|_{L^p}.\end{align}

For $W_k^2f.$
 Let $F(x, 2^k)=\mathbb E_kf(x)$. Define $F^*(x)=\sup_{k>0}\sup_{y\in {\Bbb R}^n \atop |y-x|<2^k}|F(y, 2^k)|$.
  It is easy to see that $|F^*(x)|\le CMf(x),$  where $M$  is the Hardy-Littlewood maximal operator. Then by Lemma \ref{Phib} and Carleson's inequality (see \cite{FS}), we get
\begin{align}\label{w12}\bigg\|\lambda\sqrt{N_\lambda(\{W_k^2f\}_k)}\bigg\|_{L^2}^2&\le C\dsum_{k\in \Bbb Z}\dint_{{\Bbb R}^n} |\Phi_kb(x)-\mathbb E_kb(x)|^2|\mathbb E_kf(x)|^2\,dx\\&\le \nonumber C\|b\|_{\ast}^2\|M  f\|_{L^2}^2\le C\|b\|_{L^\infty}^2\|  f\|_{L^2}^2.\end{align}

To deal with the third term $W_k^3f$, we need the following lemma.

\begin{lemma}\label{lem:finding stop time}
Fix $\lambda>0$.
For a.e. $x\in\mathbb R^n$, we can find a sequence of decreasing stopping times $\{t_i\}_{i\geq0}$ such that
\begin{align}\label{finding stop time}
\lambda\sqrt{N_{\lambda}(\{W^3_kf\}_{k\in\mathbb Z})(x)}\leq 2\bigg(\dsum_{i\geq0}|W^3_{t_{i+1}}f(x)-W^3_{t_{i}}f(x)|^2\bigg)^{1/2}.
\end{align}
\end{lemma}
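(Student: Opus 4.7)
The plan is to adapt the greedy stopping-time construction of Pisier--Xu, as used for instance in \cite{JSW08}, to the family $\{W^3_k f\}_{k\in\mathbb Z}=\{(\mathbb E_k b)(\mathbb E_k f)\}_{k\in\mathbb Z}$. The core idea is to replace the non-adapted maximising indices appearing in the definition of $N_\lambda$ by a sequence of honest stopping times, at the cost of halving the threshold from $\lambda$ to $\lambda/2$; that halving is precisely the source of the factor $2$ in \eqref{finding stop time}.

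Concretely, fix $\lambda>0$ and $x\in\mathbb R^n$, and write $N=N_\lambda(\{W^3_k f\}_k)(x)$, so that by definition there exist integers $s_0<s_1<\cdots<s_N$ (depending on $x$) with $|W^3_{s_k}f(x)-W^3_{s_{k-1}}f(x)|>\lambda$. Since $W^3_k f$ is $\sigma(\mathcal D_k)$-measurable, one may view the family as adapted to the dyadic filtration in its reversed direction (finer $\sigma$-algebras for smaller $k$). Starting from a sufficiently large integer $t_0$ (justified by $\mathbb E_k f\to 0$ as $k\to+\infty$ for $f\in L^p$ with $p<\infty$, or by a truncation and passage to the monotone limit), I would define recursively
\[
t_{i+1}(x)=\sup\bigl\{\,k<t_i(x):|W^3_k f(x)-W^3_{t_i(x)}f(x)|>\lambda/2\,\bigr\},
\]
with the convention $\sup\varnothing=-\infty$. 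Since the event $\{t_{i+1}=m\}$ depends only on the $\sigma(\mathcal D_k)$-measurable quantities $W^3_k f$ with $m\le k\le t_i$, and $\sigma(\mathcal D_{t_i})\subset\sigma(\mathcal D_m)$ on $\{t_i>m\}$, the family $\{t_i\}_{i\geq 0}$ is a strictly decreasing sequence of stopping times for the reversed dyadic filtration.

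The counting step produces the bound. For any $k\in(t_{i+1}(x),t_i(x)]$ the definition of $t_{i+1}$ forces $|W^3_k f(x)-W^3_{t_i}f(x)|\leq\lambda/2$, hence the oscillation of $k\mapsto W^3_k f(x)$ on such an interval is at most $\lambda$. Thus two consecutive jump indices $s_{k-1},s_k$, realising $|W^3_{s_k}f(x)-W^3_{s_{k-1}}f(x)|>\lambda$, cannot lie in the same stopping interval. Monotonicity of $\{s_k\}$ then forces the $N+1$ points $s_0,\ldots,s_N$ to occupy $N+1$ distinct intervals, and consequently at least $N$ of the consecutive differences $|W^3_{t_{i+1}}f(x)-W^3_{t_i}f(x)|$ strictly exceed $\lambda/2$. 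Squaring and summing,
\[
\sum_{i\geq 0}|W^3_{t_{i+1}}f(x)-W^3_{t_i}f(x)|^2\ \geq\ N\,(\lambda/2)^2\ =\ \tfrac{1}{4}\lambda^2 N_\lambda(\{W^3_k f\}_k)(x),
\]
from which \eqref{finding stop time} follows after taking square roots. The main obstacle is not the counting but the bookkeeping required to certify the stopping-time property of the greedily defined $t_i$; a secondary, routine technicality is the initialisation of $t_0$, handled either by the vanishing of $\mathbb E_k f$ at $+\infty$ for $L^p$ functions or by a standard truncation argument.
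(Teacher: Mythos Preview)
Your proposal is correct and follows essentially the same approach as the paper: both use the Pisier--Xu greedy stopping-time construction with threshold $\lambda/2$, initialising $t_0$ at a large constant index (the paper chooses the explicit $K$ with $2^{-Kn/p}\|f\|_{L^p}\|b\|_{L^\infty}\le\lambda/4$, which is exactly your ``sufficiently large $t_0$''), then recursively setting $t_{i+1}$ as the largest index below $t_i$ where the increment exceeds $\lambda/2$. Your counting via the pigeonhole argument on the intervals $(t_{i+1},t_i]$ is a slightly cleaner presentation of the paper's inductive claim $t_i(x)\ge k_i$, but the substance is identical.
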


\begin{proof}
Since $f\in L^p(\mathbb R^n)$, $b\in L^\infty(\mathbb R^n)$, by Jensen inequality we have
\begin{align*}
\sup_{x\in\mathbb R^n}|W^3_kf(x)|&\leq \sup_{x\in\mathbb R^n}|\mathbb E_kf(x)\mathbb E_kb(x)|\\
&\leq \sup_{x\in\mathbb R^n}(\mathbb E_k|f|^p)^{\frac1p}(x)|\mathbb E_kb(x)|\leq 2^{\frac{-kn}{p}}\|f\|_{L^p}\|b\|_{L^\infty}.
\end{align*}
Let $K$ be the smallest integer such that $2^{\frac{-Kn}{p}}\|f\|_{L^p}\|b\|_{L^\infty}\leq \lambda/4.$ Since $W^3_kf$ is $k$-th measurable, that is, constant-valued on the atoms of $\mathcal D_k$, we can construct a sequence of decreasing stopping times $\{t_i\}_{i\geq0}$ as follows. Let $t_0=K$. For $i\geq1$, $t_i$ is constructed inductively
$$t_i=\sup\{j:\;|W^3_jf-W^3_{t_{i-1}}f|>\frac{\lambda}{2}\}.$$

From previous estimates, for all $x\in\mathbb R^n$, $W^3_kf(x)$ converges to zero as $k\rightarrow\infty$; By standard arugments---maximal inequality and Banach principle, it is also easy to see $W^3_kf$ converges a.e. as $k\rightarrow-\infty$. Hence for a.e. $x\in\mathbb R^n$, $N_{\lambda}(\{W^3_kf\}_{k\in\mathbb Z})(x)$ is finite. Fix $x\in\mathbb R^n$, assume $N_{\lambda}(\{W^3_kf\}_{k\in\mathbb Z})(x)=N$, which means there exists a sequence of integers $\{k_i\}_{0\leq i\leq N}$ such that $|W^3_{k_{i+1}}f(x)-W^3_{k_i}f(x)|>{\lambda}$.
Then $|W^3_{k_{1}}f(x)-W^3_{k_0}f(x)|>{\lambda}$ implies either $|W^3_{k_{1}}f(x)-W^3_{t_0}f(x)|>\frac{\lambda}{2}$ or $|W^3_{k_{0}}f(x)-W^3_{t_0}f(x)|>\frac{\lambda}{2}$. By the defintion of $t_1$, we have $t_1(x)\geq k_1$. Inductively, we have $t_i(x)\geq k_i$ for all $1\leq i\leq N$. Thus
\begin{align*}
\dsum_{i\geq0}|W^3_{t_{i+1}}f(x)-W^3_{t_{i}}f(x)|^2&\geq \dsum_{0\leq i\leq N-1}|W^3_{t_{i+1}}f(x)-W^3_{t_{i}}f(x)|^2\\
&\geq N(\lambda/2)^2=(\lambda/2)^2N_{\lambda}(\{W^3_kf\}_{k\in\mathbb Z})(x),
\end{align*}
which yields the desired result.
\end{proof}

 Now we deal with $W_k^3f$. By Lemma \ref{lem:finding stop time},  we can find a sequence of stopping times $\{t_k\}_{k\geq0}$ such that
 $$\begin{array}{cl}
\|\lambda\sqrt{N_\lambda(\{W_k^3f\}_{k\in \Bbb Z})}\|_{L^2}&\le 2\bigg\|\bigg(\dsum_{k\ge 0}|\mathbb E_{t_{k+1}}b\mathbb E_{t_{k+1}}f-\mathbb E_{t_{k}}b\mathbb E_{t_{k}}f|^2\bigg)^{1/2}\bigg\|_{L^2}\\&\le 2\bigg\|\bigg(\dsum_{k\ge 0}|(\mathbb E_{t_{k+1}}b-\mathbb E_{t_{k}} b)\mathbb E_{t_{k}}f|^2\bigg)^{1/2}\bigg\|_{L^2}\\&+2\bigg\|\bigg(\dsum_{k\ge 0}|(\mathbb E_{t_{k+1}}f-\mathbb E_{t_{k}}f)\mathbb E_{t_{k+1}}b|^2\bigg)^{1/2}\bigg\|_{L^2}.
\end{array}$$
By Lemma \ref{Phib1} and Carleson's inequality (see \cite{FS} ), we get
 \begin{align}\label{w32}\dsum_{k\ge 0}\dint_{{\Bbb R}^n} |\mathbb E_{t_{k+1}}b(x)-\mathbb E_{t_{k}} b(x)|^2|\mathbb E_{t_{k}}f(x)|^2\,dx&\le C\|b\|_{\ast}^2\|M  f\|_{L^2}^2\\&\le \nonumber C\|b\|_{L^\infty}^2\|  f\|_{L^2}^2.\end{align}
Since $\|\mathbb E_{t_{k}}b\|_{L^\infty}\le \|b\|_{L^\infty}$ and $\{E_{t_{k}}f\}_{k\geq0}$ is still a martingale (see for instance \cite{PiXu88}), using Burkholder-Gundy inequality, we get for $1<p<\infty$
 \begin{align}\label{w3p}&\bigg\|\bigg(\dsum_{k\ge 0} |\mathbb E_{t_{k+1}}f-\mathbb E_{t_{k}} f|^2|\mathbb E_{t_{k+1}}b|^2\bigg)^{1/2}\bigg\|_{L^p}\\&\le  \nonumber C\|b \|_{L^\infty}\bigg\|\bigg(\dsum_{k\ge 0} |\mathbb E_{t_{k+1}}f-\mathbb E_{t_{k}} f|^2\bigg)^{1/2}\bigg\|_{L^p}\\&\le  \nonumber C\|b\|_{L^\infty}\|  f\|_{L^p}.\end{align}
Combining the estimates of \eqref{w32} and  \eqref{w3p} for $p=2,$ we get  $$\begin{array}{cl}
\|\lambda\sqrt{N_\lambda(\{W_k^3f\}_{k\in \Bbb Z})}\|_{L^2}&\le C\|b\|_{L^\infty}\|  f\|_{L^2}.
\end{array}$$
Combing the estimates of $W_k^if,\,i=1,2,3$, we get  \begin{equation}\label{2}
\|\lambda\sqrt{N_\lambda(\mathcal{L}f)}\|_{L^2}\le C\|b\|_{L^\infty}\|f\|_{L^2}.
\end{equation}
 Next we apply \eqref{2} to establish weak type $(1,1)$ bounds for $\lambda\sqrt{N_\lambda(\mathcal{L}f)}.$ To establish \eqref{weak}
we  perform
the Calder\'{o}n-Zygmund  decomposition of $f$ at height  $\alpha$,  producing a disjoint  family
of dyadic cubes ${Q}$ with  total  measure
$\dsum  |Q| \le  \frac{C}{\alpha}\|f\|_1$ and allowing us  to  write $f=g+h$ with $\|g\|_{L^\infty}\le C\alpha$, $\|g\|_{L^1}\le C\|f\|_{L^1}$ and $ h=\sum_{Q}h_Q,$
where  each $h_Q$  is supported in $Q$
and  has mean  value  zero  such  that
$\sum\|h_Q\|_1\le C\|f\|_{1}.$ Since  we already know  that the  $L^2$  norm  of
$\lambda\sqrt{N_\lambda(\mathcal{L}g)}$ is uniformly controlled by the $L^2$  norm  of $g$,
matters are  reduced  in the usual way to estimating  $\lambda\sqrt{N_\lambda(\mathcal{L}h)}$
away  from $\bigcup \widetilde{Q}$ where $\widetilde{Q}$ is  a  fixed large dilate  of
$Q$. The fact $\ell^1$ embeds into $\ell^2$ implies
$$\begin{array}{cl}\lambda\sqrt{N_\lambda(\mathcal{L}h)(x)}&\le 2\dsum_{k\in \Bbb Z}|\Phi_{k}b(x)\Gamma_{k}h(x)|,\end{array}$$
we see that
$$\begin{array}{cl}&\alpha|\{x\notin\bigcup \widetilde{Q}:\lambda\sqrt{N_\lambda(\mathcal{L}h)}>\alpha\}|\\&\le 2\dsum_{Q}\dsum_{k\in \Bbb Z}\dint_{x\notin\widetilde{Q}}|\Phi_{k}b(x)\Gamma_{k}h_Q(x)|\,dx\\&\le 2\dsum_{Q}\dsum_{k<k(Q)}\dint_{x\notin\widetilde{Q}}|\Phi_{k}b(x)\Gamma_{k}h_Q(x)|\,dx+ 2\dsum_{Q}\dsum_{k\ge k(Q)}\dint_{x\notin\widetilde{Q}}|\Phi_{k}b(x)\Gamma_{k}h_Q(x)|\,dx.\end{array}$$
 For $k\le k(Q)$ (here $2^{k(Q)}$ is roughly the diameter of $Q$ described in Lemma 3.1 in \cite{JSW08}) we estimate
 $$\begin{array}{cl}& \dsum_{Q}\dsum_{k<k(Q)}\dint_{x\notin\widetilde{Q}}|\Phi_{k}b(x)\Gamma_{k}h_Q(x)|\,dx\\&\le C\dsum_{Q}\dsum_{k<k(Q)}\|\Phi_{k}b\|_{L^\infty}\dint_Q|h_Q(y)|\dint_{x\notin \widetilde{Q}}2^{-kn}(2^{-k}|x-y|)^{-(n+1)}\,dxdy\\&\le C\|b\|_{L^\infty}\dsum_{Q}\dsum_{k<k(Q)}\dint_{Q}|h_Q(y)|\dint_{|x-y|\ge C2^ {k(Q)}}2^{-kn}(2^{-k}|x-y|)^{-(n+1)}\,dxdy\\&\le C\|b\|_{L^\infty}\dsum_{Q}\dsum_{k<k(Q)}2^{(k-k(Q))}\|h_Q\|_{L^1}\le C\|b\|_{L^\infty}\|f\|_{L^1}.\end{array}$$
Thus, using the vanishing mean value of $h_Q$, the right side of the above inequality is dominated by $$\begin{array}{cl}&\dsum_{Q}\dsum_{k\ge k(Q)}\dint_{x\notin\widetilde{Q}}|\Phi_{k}b(x)\Gamma_{k}h_Q(x)|\,dx\\&\le\dsum_{Q}\dsum_{k\ge k(Q)}\dint_Q|h_Q(y)|\dint_{x\notin \widetilde{Q}}|(\mu\ast \omega)_k(x-y)-(\mu\ast \omega)_k(x-y_Q)|\,dxdy,\end{array}$$ where $y_Q$ denotes the `center' of $Q$ as described in Lemma 3.1 in \cite{JSW08}). This in turn, using condition \eqref{condition1}, is $$\begin{array}{cl}\dsum_{Q}\dsum_{k\ge k(Q)}\dint_{x\notin\widetilde{Q}}|\Phi_{k}b(x)\Gamma_{k}h_Q(x)|\,dx&\le C\| b\|_{L^\infty}\dsum_{Q}\dsum_{k\ge k(Q)}2^{-\tau(k-k(Q))}\|h_Q\|_{L^1}\\&\le C\|b\|_{L^\infty}\|f\|_{L^1}\end{array}$$ establishing the uniform weak-type (1,1) bound for $\lambda\sqrt{N_\lambda(\mathcal{L}f)}$ and therefore finishing the proof of \eqref{weak}.
 By interpolation between (\ref{2}) and  (\ref{weak}), imply all the $L^p$ bounds $\lambda\sqrt{N_\lambda(\mathcal{L}f)}$ of for $1<p\le2$.
So to prove \eqref{p}, it suffices to prove $L^p$ bounds of $\lambda\sqrt{N_\lambda(\mathcal{L}f)}$  for $2<p<\infty.$
Since we have obtained the $L^p$ bounds of $\lambda\sqrt{N_\lambda(\{W_k^1f\}_k)}$  for $1<p<\infty$ in \eqref{w1p} and the  $L^p$ bounds of $I_2$ for  $1<p<\infty$ in \eqref{w3p}, we need only to prove  for $2<p<\infty$
\begin{align}\label{p2}
\big\|\big(\sum_{k\in\mathbb Z}|(\Phi_kb-\mathbb E_kb)\mathbb E_kf|^2\big)^{1/2}\big\|_{L^p}\le C\|b\|_{L^\infty}\|f\|_{L^p}.
\end{align}
and
\begin{align}\label{p3}
\big\|\big(\sum_{k\ge 0}|(\mathbb E_{t_{k+1}}b-\mathbb E_{t_k}b)\mathbb E_{t_{k+1}}f|^2\big)^{1/2}\big\|_{L^p}\le C\|b\|_{L^\infty}\|f\|_{L^p}.
\end{align}

We first prove \eqref{p2}.
For $2<p<\infty$, by H\"{o}lder's inequality, we have
\begin{align}\label{pe}
&\big\|\big(\sum_{k\in\mathbb Z}|[\Phi_kb-\mathbb E_kb]\mathbb E_kf|^2\big)^{1/2}\big\|_{L^p}\\& \nonumber=\sup_{\|\{h_k\}\|_{L^{p'}(\ell^2)}\le1}\big|\int_{\mathbb R^n}\sum_{k\in \Bbb Z}([\Phi_kb(x)-\mathbb E_kb(x)]\mathbb E_kf(x))h_k(x)dx\big|\\
&\nonumber=\sup_{\|\{h_k\}\|_{L^{p'}(\ell^2)}\le1}\big|\int_{\mathbb R^n}\sum_{k\in \Bbb Z}[\Phi_k(\mathbb E_kf\cdot h_k)(y)-\mathbb E_k(\mathbb E_kf \cdot h_k)(y)]b(y)dy\big|\\
&\le\nonumber\sup_{\|\{h_k\}\|_{L^{p'}(\ell^2)}\le1}\|\sum_{k\in \Bbb Z}[\Phi_k(\mathbb E_kf \cdot h_k)-\mathbb E_k(\mathbb E_kf\cdot h_k)]\|_{L^{1}}\|b\|_{L^\infty}.
\end{align}
It suffices to show that
\begin{align}\label{sdp'}
\|\sum_{k\in \Bbb Z}[\Phi_k(\mathbb E_kf \cdot h_k)-\mathbb E_k(\mathbb E_kf\cdot  h_k)]\|_{L^{1}}\le C\|f\|_{L^p}\|\{h_k\}\|_{L^{p'}(\ell^2)},\ \ 1<p'\le 2.
\end{align}
Clearly,  using $\|\big(\sum_{k\in\mathbb Z}|(\Phi_kb-\mathbb E_kb)\mathbb E_kf|^2\big)^{1/2}\|_{L^2}\le C\|b\|_{L^\infty}\|f\|_{L^2}$ (see \eqref{w12}) by duality,
\begin{align}\label{sd2}
\|\sum_{k\in \Bbb Z}[\Phi_k(\mathbb E_kf\cdot h_k)-\mathbb E_k(\mathbb E_kf\cdot h_k)]\|_{L^1}\le C\|f\|_{L^2}\|\{h_k\}\|_{L^2(\ell^2)}.
\end{align}
Applying
$\big|\{x\in\mathbb R^n:|\sum_{k\in \Bbb Z}[\Phi_k(g_k)(x)-\mathbb E_k(g_k)(x)]|>\alpha\}\big|\le \frac C{\alpha}\|\{g_k\}\|_{L^1(\ell^2)},
$ which was established in \cite{DHL} and $|\mathbb E_kf(x)|\le \|f\|_{L^\infty}$ for any fixed $x\in {\Bbb R}^n$, we get
\begin{align}\label{sd1}\nonumber
\big|\{x\in\mathbb R^n:|\sum_{k\in \Bbb Z}[\Phi_k(\mathbb E_kf \cdot h_k)(x)-\mathbb E_k(\mathbb E_kf \cdot h_k)(x)]|>\alpha\}\big|&\le \frac C{\alpha}\|\{\mathbb E_kf \cdot h_k\}\|_{L^1(\ell^2)}\\&\le \frac C{\alpha}\|f\|_{L^\infty}\|\{h_k\}\|_{L^1(\ell^2)},
\end{align}
where $\alpha>0$ and $C$ is independent of $\alpha$, $f$ and $\{h_k\}$.
Then by interpolation between \eqref{sd2} and \eqref{sd1}, we get \eqref{sdp'}.

Next we prove \eqref{p3}. Similar to the proof of \eqref{pe}, we get for $2<p<\infty,$
 \begin{align*}
&\big\|\big(\sum_{k\ge 0}|[\mathbb E_{t_{k+1}}b-\mathbb E_{t_k}b]\mathbb E_{t_{k+1}}f|^2\big)^{1/2}\big\|_{L^p}\\
&\le\sup_{\|\{h_k\}\|_{L^{p'}(\ell^2)}\le1}\Big\|\sum_{k\ge 0}[\mathbb E_{t_{k+1}}(\mathbb E_{t_{k+1}}f\cdot h_k)-\mathbb E_{t_k}(\mathbb E_{t_{k+1}}f \cdot h_k)]\Big\|_{L^{1}}\|b\|_{L^\infty}.
\end{align*}
It suffices to show that
\begin{align}\label{Edp'}
\Big\|\sum_{k\ge 0}[\mathbb E_{t_{k+1}}(\mathbb E_{t_{k+1}}f \cdot h_k)-\mathbb E_{t_k}(\mathbb E_{t_{k+1}}f \cdot h_k)]\Big\|_{L^{1}}\le C\|f\|_{L^p}\|\{h_k\}\|_{L^{p'}(\ell^2)},\ \ 1<p'\le 2.
\end{align}
By $\|\big(\sum_{k\ge 0}|[\mathbb E_{t_{k+1}}b-\mathbb E_{t_k}b]\mathbb E_kf|^2\big)^{1/2}\|_{L^2}\le C\|b\|_{L^\infty}\|f\|_{L^2}$ (see \eqref{w32}) by duality, we get
\begin{align}\label{Ed2}
\|\sum_{k\ge 0}[\mathbb E_{t_{k+1}}(\mathbb E_{t_{k+1}}f\cdot h_k)-\mathbb E_{t_k}(\mathbb E_{t_{k+1}}f\cdot h_k)]\|_{L^1}\le C\|f\|_{L^2}\|\{h_k\}\|_{L^2(\ell^2)}.
\end{align}
if we can prove that for $\{\widetilde{h}_k\}\in L^1(\ell^2)({\Bbb R}^n)$, \begin{align}\label{Ed1}
\big|\{x\in\mathbb R^n:|\sum_{k\ge 0}[\mathbb E_{t_{k+1}}( \widetilde{h}_k)-\mathbb E_{t_k}(\widetilde{ h}_k)(x)]|>\alpha\}\big|\le \frac C{\alpha}\|\{\widetilde{h}_k\}\|_{L^1(\ell^2)},
\end{align}
then by $|\mathbb E_{t_{k+1}}f(x)|\le \|f\|_{L^\infty}$ for any fixed $x\in {\Bbb R}^n$, we can  get
\begin{align}\label{Ed3}\nonumber
\big|\{x\in\mathbb R^n:|\sum_{k\ge 0}[\mathbb E_{t_{k+1}}(E_{t_{k+1}}f \cdot h_k)(x)-\mathbb E_{t_k}(E_{t_{k+1}}f \cdot h_k)(x)]|>\alpha\}\big|&\le \frac C{\alpha}\|\{E_{t_{k+1}}f\cdot h_k\}\|_{L^1(\ell^2)}\\&\le \frac C{\alpha}\|f\|_{L^\infty}\|\{h_k\}\|_{L^1(\ell^2)},
\end{align}
where $\alpha>0$ and $C$ is independent of $\alpha$, $f$ and $\{h_k\}$.
Thus, by interpolation between (\ref{Ed2}) and (\ref{Ed3}), we get (\ref{Edp'}).

 Now we prove \eqref{Ed1}. For $\alpha>0$, we perform Calder\'{o}n-Zygmund decomposition of $\|\{\widetilde{h}_k\}\|_{\ell^2}$ at height $\alpha$, then there exists $\Lambda\subseteq\mathbb Z\times\mathbb Z^n$ such that the collection of dyadic cubes $\{Q_\beta^j\}_{(j,\beta)\in\Lambda}$ are disjoint and the following hold:
  \begin{itemize}
  \item[{\rm(i)}]
  $|\bigcup_{(j,\beta)\in\Lambda}Q_\beta^j|\le \alpha^{-1}\|\{\widetilde{h}_k\}\|_{L^1(\ell^2)}$;
  \item[{\rm(ii)}]
  $\|\{\widetilde{h}_k(x)\}\|_{\ell^2}\le \alpha$, if $x\not\in\bigcup_{(j,\beta)\in\Lambda}Q_\beta^j$;
   \item[{\rm(iii)}]
   $\frac1{|Q_\beta^j|}\int_{Q_\beta^j}\|\{\widetilde{h}_k(x)\}\|_{\ell^2}dx\le2^n\alpha$ for each $(j,\beta)\in\Lambda$.
   \end{itemize}
 For $k\in \Bbb Z$, we set
\begin{equation}\nonumber
 g^{(k)}(x)=\left\{
 \begin{array}{ll}
 \widetilde{h}_k(x),& \text{if}\ x\not\in \bigcup_{(j,\beta)\in\Lambda}Q_\beta^j,\\
 \frac1{|Q_\beta^j|}\int_{Q_\beta^j}\widetilde{h}_k(y)dy,& \text{if}\ x\in Q_\beta^j,(j,\beta)\in\Lambda.
 \end{array}
 \right.
 \end{equation}
 and
 \begin{equation}\nonumber
 e^{(k)}(x)=\sum_{(j,\beta)\in\Lambda}[\widetilde{h}_k(x)-\mathbb E_j\widetilde{h}_k(x)]\chi_{Q_\beta^j}(x):=\sum_{(j,\beta)\in\Lambda} e^{(k)}_{j,\beta}(x).
 \end{equation}
 First we have $\|\{g^{(k)}\}\|^2_{L^2(\ell^2)}\le 2\alpha\|\{\widetilde{h}_k\}\|_{L^1(\ell^2)}$. In fact, by (ii), (iii) and Minkowski's inequality,
 \begin{align*}
 \|\{g^{(k)}\}\|^2_{L^2(\ell^2)}&=\int_{(\cup_{(j,\beta)\in\Lambda}Q_\beta^j)^c}\|\{\widetilde{h}_k(x)\}\|^2_{\ell^2}dx+
 \sum_{(j,\beta)\in\Lambda}\int_{Q_\beta^j}\sum_{k\in \Bbb Z}\big|\frac1{|Q_\beta^j|}\int_{Q_\beta^j}\widetilde{h}_k(y)dy\big|^2dx\\
 &\le \alpha\int_{(\cup_{(j,\beta)\in\Lambda}Q_\beta^j)^c}\|\{\widetilde{h}_k(x)\}\|_{\ell^2}dx+2^n\alpha\sum_{(j,\beta)\in\Lambda}\int_{Q_\beta^j}\|\widetilde{h}_k(x)\|_{\ell^2}dx\\
 &\le 2^n\alpha\|\{\widetilde{h}_k\}\|_{L^1(\ell^2)}.
 \end{align*}
 Thus, for above $\alpha$, by the result in  \cite{PiXu88} by duality,

\begin{align*}
&\alpha^2\big|\{x\in \mathbb R^n:|\sum_{k\ge 0}[\mathbb E_{t_{k+1}} g^{(k)}(x)-\mathbb E_{t_{k}}g^{(k)}(x)]|>\alpha\}\big|\\&\le C\big\|\sum_{k\ge 0}[\mathbb E_{t_{k+1}}g^{(k)}-\mathbb E_{t_{k}}g^{(k)}\big\|_{L^2}^2\\
&\le C\|\{g^{(k)}\}\|_{L^2(\ell^2)}^2\\
&\le C\|\{g^{(k)}\}\|_{L^2(\ell^2)}^2\le C\alpha\|\{\widetilde{h}_k\}\|_{L^1(\ell^2)}.
\end{align*}
So, we get
\begin{equation}\nonumber
\big|\{x\in \mathbb R^n:|\sum_{k\ge 0}[\mathbb E_{t_{k+1}} g^{(k)}(x)-\mathbb E_{t_{k}}g^{(k)}(x)]|\le \frac C{\alpha}\|\{\widetilde{h}_k\}\|_{L^1(\ell^2)}.
\end{equation}
  On the other hand, it is easy to see that
 $$\int_{\Bbb R^n} e^{(k)}_{j,\beta}(x)dx=0\quad \text{for all}\quad k\in\mathbb Z,\ (j,\beta)\in\Lambda.$$
Let $\tilde{Q}_\beta^j$ be the cube concentric with $Q_\beta^j$ and with side length $4$ times that of $Q_\beta^j$. It is obvious that
\begin{equation}
\big|\bigcup_{(j,\beta)\in\Lambda}\tilde{Q}_\beta^j\big|\le C\sum_{(j,\beta)\in\Lambda}|Q_\beta^j|\le \frac C{\alpha}\|\{\widetilde{h}_k\}\|_{L^1(\ell^2)}.
\end{equation}
Note that $\mathbb E_\ell  e^{(k)}_{j,\beta}$ is supported in $Q_\beta^j$ when $\ell\le j$ and $\mathbb E_\ell  e^{(k)}_{j,\beta}$ vanishes everywhere when $\ell\ge j$.
\begin{align*}
&\alpha\big|\{x\not\in\bigcup\tilde{Q}_\beta^j:|\sum_{k\ge 0}[\mathbb E_{t_{k+1}}( e^{(k)})(x)-\mathbb E_{t_{k}}( e^{(k)})(x)]|>\alpha\}\big|=0.
\end{align*}
This completes the proof of \eqref{Ed1}.

\textbf{Case 2, $\Phi_k1= 0.$} The argument is very similar to the proof of Case $1$ but easier. Since $\phi\in {\mathscr
S}({\Bbb R}^n)$ and $\widehat{\phi}(0)=0$, then $\sup_{k\in \Bbb Z}\|\Phi_kb\|_{L^\infty}\le C\|b\|_{L^\infty}$ and  $d\nu(x,t)=\sum_{k\in \Bbb Z}|\Phi_kb(x)|^2\,dx\,\delta_{2^k}(t)$ is a Carleson measure on $\Bbb R^{n+1}_{+}$ whose norm is controlled by a constant multiple of $\|b\|_{L^\infty}^2$ (see \cite{GR}). So,  we need only a little adjustment in \eqref{decompose} with replacing $\Phi_kb\Gamma_kf=\Phi_kb(\Gamma_kf-\mathbb E_kf)+(\Phi_kb-\mathbb E_kb)\mathbb E_kf+\mathbb E_kb\mathbb E_kf$ by $\Phi_kb\Gamma_kf=\Phi_kb(\Gamma_kf-\mathbb E_kf)+\Phi_kb\mathbb E_kf.$ \qed

\section{Proof of Theorem \ref{Phi1}}
  Write $$[b, \Phi_k]\nabla=[b,\nabla \Phi_k]-\Phi_k[b,\nabla].$$ By subadditivity,
\begin{equation}\nonumber
\lambda\sqrt{N_\lambda(\mathscr F_b \nabla f)}\le C\lambda\sqrt{N_{\lambda/2}(\{[b,\nabla \Phi_k]f\}_k)}+C\lambda\sqrt{N_{\lambda/2}(\{\Phi_k[b,\nabla]f\}_k)}.
\end{equation}
 By Theorem 1.1 in \cite{JSW08},  notice that $[b,\nabla]f=-f\nabla b$, we get
 \begin{equation*}
\|\lambda\sqrt{N_\lambda(\{\Phi_k[b,\nabla]f\}_k)}\|_{L^p}\le C\|[b,\nabla]f\|_{L^p}\le C\|\nabla b\|_{L^\infty}\|f\|_{L^p},\,\,\,\, 1<p<\infty
\end{equation*} and
\begin{equation*}\alpha|\{x\in {\Bbb R}^n: \lambda\sqrt{N_\lambda(\{\Phi_k[b,\nabla]f\}_k)(x)}>\alpha\}|\le C\|[b,\nabla]f\|_{L^1}\le C\|\nabla b\|_{L^\infty}\|f\|_{L^1}\end{equation*} uniformly in $\lambda>0.$
So to prove   that $\lambda\sqrt{N_\lambda(\mathscr F_b \nabla f)}$ is bounded on $L^p({\Bbb R}^n)$ and is of weak $(1,1)$, it suffices to prove  the same properties hold for $\{[b,\nabla \Phi_k]\}_k.$
Write $$[b,\nabla \Phi_k]f=[b,\nabla\Phi_k]f-([b,\nabla \Phi_k]1)\Phi_kf+([b,\nabla \Phi_k]1)\Phi_kf:=P_kf+([b,\nabla \Phi_k]1)\Phi_kf.$$
By subadditivity again,
\begin{equation}\nonumber
\lambda\sqrt{N_\lambda(\{[b,\nabla \Phi_k]f\}_k)}\le C\lambda\sqrt{N_{\lambda/2}(\{P_kf\}_k)}+C\lambda\sqrt{N_{\lambda/2}(\{([b,\nabla \Phi_k]1)\Phi_kf\}_k)}.
\end{equation}
For $\{P_kf\}_{k\in \Bbb Z}$, we need the following lemma.

\begin{lemma}\label{ChJ} \,(\cite{ChJ, DR1, GR}) Denote by $\Theta_j f(x):=\int_{\mathbb{R}^n}\psi_j(x,y)f(y)\,dy,$ where $\psi_j(x,y)$ satisfies the standard kernel conditions, i.e., for some $\gamma>0$ and $C>0,$ \begin{align}\label{ChJ1}|\psi_j(x,y)|\le C\dfrac{2^{j\gamma}}{(2^j+|x-y|)^{n+\gamma}}\end{align}
and
\begin{align}\label{ChJ2}|\psi_j(x+h,y)-\psi_j(x,y)|+|\psi_j(x,y+h)-\psi_j(x,y)|\le C\dfrac{|h|^\gamma}{(2^j+|x-y|)^{n+\gamma}},\quad |h|\le 2^j,\end{align}  for all $x,\,y\in \mathbb{R}^n$ and $j\in \Bbb Z.$ If $\Theta_j 1=0$, then for $1<p<\infty,$
$$\big\|(\sum_{j\in \Bbb Z}|\Theta_j f|^2)^{1/2}\big\|_{L^p}\le C\|f\|_{L^p}$$ and \begin{equation*}\sup_{\alpha>0}\alpha|\{x\in {\Bbb R}^n: (\sum_{j\in \Bbb Z}|\Theta_j f(x)|^2)^{1/2}>\alpha\}|\le C\|f\|_{L^1}.\end{equation*}\end{lemma}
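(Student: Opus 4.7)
My plan is to view $\vec\Theta f:=\{\Theta_j f\}_{j\in\mathbb Z}$ as a single vector-valued (scalar-to-$\ell^2$) Calderón--Zygmund operator and run the standard two-step program: first establish $L^2$-boundedness using the cancellation hypothesis $\Theta_j 1=0$ combined with an almost-orthogonality argument, and then deduce weak-$(1,1)$ together with the full $L^p$ range via vector-valued Calderón--Zygmund theory applied to the $\ell^2$-valued kernel.

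\textbf{Step 1: $L^2$-bound.} I would introduce a smooth Littlewood--Paley resolution $I=\sum_{k\in\mathbb Z} S_k$ (with $S_k$ a standard frequency projection at scale $2^{-k}$) and write $\Theta_j f=\sum_k \Theta_j S_k f$. The cancellation $\Theta_j 1=0$ upgrades the size estimate \eqref{ChJ1} to the improved bound
$|\Theta_j S_k f(x)|\le C 2^{-|j-k|\gamma}\, M f(x)$
(with $M$ the Hardy--Littlewood maximal operator); when $k\ge j$ this uses the Hölder regularity \eqref{ChJ2} of $\psi_j$ in~$y$ against the mean-zero piece $S_k f$, and when $k<j$ it uses that $\psi_j$ has effective spatial scale $2^j$ while $S_k f$ is essentially constant at that scale together with $\Theta_j 1=0$. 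Squaring, summing over $j$ via Minkowski, and invoking the Littlewood--Paley square function inequality yields $\|(\sum_j |\Theta_j f|^2)^{1/2}\|_{L^2}\le C\|f\|_{L^2}$. Alternatively, one can establish this by Cotlar--Stein applied to the operators $\vec\Theta_j:=(0,\dots,\Theta_j,0,\dots)$ from $L^2$ to $L^2(\ell^2)$; in either realization the cancellation hypothesis is the crucial input.

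\textbf{Step 2: Vector-valued kernel estimates.} Setting $\vec\psi(x,y):=\{\psi_j(x,y)\}_j\in\ell^2$, the hypotheses \eqref{ChJ1}--\eqref{ChJ2} yield the classical size and regularity bounds
\[
\|\vec\psi(x,y)\|_{\ell^2}\le \frac{C}{|x-y|^n},\qquad \|\vec\psi(x,y)-\vec\psi(x,y')\|_{\ell^2}\le C\frac{|y-y'|^{\gamma'}}{|x-y|^{n+\gamma'}}\quad (|x-y|\ge 2|y-y'|),
\]
with a similar Hölder estimate in the first variable, for some $\gamma'\in(0,\gamma]$. These are obtained by splitting the sum over $j$ at $2^j\sim|x-y|$: for $2^j\le |x-y|$ the numerator dominates via the $2^{j\gamma}$ factor, and for $2^j\ge|x-y|$ one uses $(2^j+|x-y|)^{n+\gamma}\gtrsim 2^{j(n+\gamma)}$; both geometric series sum to the required scale. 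The Hölder bound is obtained analogously, using the smaller of the direct difference and the raw size estimate.

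\textbf{Step 3: Conclusion.} Steps 1 and 2 show that $\vec\Theta$ is an $\ell^2$-valued Calderón--Zygmund operator which is bounded from $L^2(\mathbb R^n)$ to $L^2(\mathbb R^n;\ell^2)$. By the standard Banach-space-valued Calderón--Zygmund theorem (Calderón--Zygmund decomposition of $f$ at height $\alpha$ with the Hörmander-type condition on $\vec\psi$), we obtain the weak-$(1,1)$ inequality
\[
\alpha\,\bigl|\{x:\|\vec\Theta f(x)\|_{\ell^2}>\alpha\}\bigr|\le C\|f\|_{L^1}.
\]
Marcinkiewicz interpolation with the $L^2$-bound yields the $L^p$ inequality for $1<p\le 2$, and the range $2<p<\infty$ follows by duality against a scalar-to-$\ell^2$ operator of the same form (whose adjoint has kernel $\{\psi_j(y,x)\}_j$ and still satisfies \eqref{ChJ1}--\eqref{ChJ2}), using the second cancellation $\Theta_j^*1=0$ only implicitly through the symmetric roles of the two Hölder conditions.

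The main obstacle is Step 1: verifying the almost-orthogonal decay $\|\Theta_j S_k\|_{L^2\to L^2}\lesssim 2^{-|j-k|\gamma}$ has to genuinely exploit $\Theta_j1=0$, since without this cancellation the size condition \eqref{ChJ1} alone would be too weak to guarantee an exponentially decaying off-diagonal estimate when $k<j$; the Hölder condition \eqref{ChJ2} alone handles $k>j$ by pairing it against the mean-zero frequency piece, but the opposite regime is exactly where the cancellation hypothesis is indispensable.
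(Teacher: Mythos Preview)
The paper does not prove this lemma; it is quoted as a known result from \cite{ChJ, DR1, GR} and invoked without argument. Your outline is the standard one found in those references: almost-orthogonality (via a Littlewood--Paley resolution and the cancellation $\Theta_j1=0$) for the $L^2$ bound, followed by the vector-valued Calder\'on--Zygmund machinery for weak $(1,1)$ and the remaining $L^p$ range.

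Two small comments. In Step~1, the pointwise bound $|\Theta_jS_kf(x)|\le C2^{-|j-k|\gamma}Mf(x)$ is stronger than what you actually need and than what the argument naturally delivers; the clean statement is the operator norm estimate $\|\Theta_jQ_k\|_{L^2\to L^2}\lesssim 2^{-|j-k|\gamma}$ (with $Q_k$ a mean-zero Littlewood--Paley piece), which combines with Cauchy--Schwarz or Cotlar--Stein exactly as you indicate. In Step~3, the remark about $\Theta_j^*1=0$ is a red herring: the $p>2$ case requires only the $L^2$ bound (automatic for the adjoint) and the H\"older regularity of the kernel in the $x$-variable, which you have from \eqref{ChJ2}; no second cancellation condition enters. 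With those clarifications your sketch is correct and matches the approach in the cited sources.
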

Denote by $\widetilde{\phi}:=\nabla\phi$ and $\widetilde{b}:=\nabla b$. Then we can write $\nabla \Phi_k f=2^{-k}\widetilde{\phi}_k\ast f$
and $[b,\nabla \Phi_k]1=-\phi_k\ast\widetilde{b}.$ Recall that $P_kf=[b,\nabla\Phi_k]f-([b,\nabla \Phi_k]1)\Phi_kf.$ Let $\psi_k(x,y)$ be the kernel of the operator $P_k$ with  \begin{align*}P_kf(x)=\int_{{\Bbb R}^n}\psi_k(x,y)f(y)\,dy.\end{align*}Then we can write \begin{align*}\psi_k(x,y)=2^{-k}\widetilde{\phi}_k(x-y)(b(x)-b(y))+(\phi_k\ast\widetilde{b})(x)\phi_k(x-y).\end{align*} By $|b(x)-b(y)|\le \|\widetilde{b}\|_{L^\infty}|x-y|$ and $|(\phi_k\ast \widetilde{b})(x)|\le \|\widetilde{b}\|_{L^\infty}$, we get
\begin{align*}|\psi_k(x,y)|\le 2^{-k}\|\widetilde{b}\|_{L^\infty}|\widetilde{\phi}_k(x-y)||x-y|+\|\widetilde{b}\|_{L^\infty}|\phi_k(x-y)|\le C\|\widetilde{b}\|_{L^\infty}\frac{2^k}{(2^k+|x-y|)^{n+1}}\end{align*}for all $x,\,y\in \mathbb{R}^n$ and $k\in \Bbb Z.$ Also by $\widetilde{\phi}\in \mathscr{S}({\Bbb R}^n)$, $|b(x)-b(y)|\le \|\widetilde{b}\|_{L^\infty}|x-y|$ and $|(\phi_k\ast \widetilde{b})(x)|\le \|\widetilde{b}\|_{L^\infty}$, we get \begin{align*}|\psi_k(x,y+h)-\psi_k(x,y)|&\le 2^{-k}|\widetilde{\phi}_k(x-y-h)-\widetilde{\phi}_k(x-y)||b(x)-b(y)|\\&+2^{-k}|\widetilde{\phi}_k(x-y-h)||b(y)-b(y+h)|\\&+|(\phi_k\ast \widetilde{b})(x)||\phi_k(x-y-h)-\phi_k(x-y)|\\&\le C\|\widetilde{b}\|_{L^\infty}\frac{|h|}{(2^k+|x-y|)^{n+1}},\,\,\,\,\,\,\,\, |h|\le 2^k,\end{align*} for all $x,\,y\in \mathbb{R}^n$ and $k\in \Bbb Z.$
Similarly, we get\begin{align*}|\psi_k(x+h,y)-\psi_k(x,y)|&\le 2^{-k}|\widetilde{\phi}_k(x+h-y)-\widetilde{\phi}_k(x-y)||b(x)-b(y)|\\&+2^{-k}|\widetilde{\phi}_k(x+h-y)||b(x+h)-b(x)|\\&+|(\phi_k\ast \widetilde{b})(x)||\phi_k(x+h-y)-\phi_k(x-y)|\\&+|(\phi_k\ast \widetilde{b})(x+h)-(\phi_k\ast \widetilde{b})(x)||\phi_k(x+h-y)|\\&\le C\|\widetilde{b}\|_{L^\infty}\frac{|h|}{(2^k+|x-y|)^{n+1}},\,\,\,\,\,\,\, |h|\le 2^k,\end{align*} for all $x,\,y\in \mathbb{R}^n$ and $k\in \Bbb Z.$
This says that the kernel of $P_k$ continues to satisfy \eqref{ChJ1} and \eqref{ChJ2}. It is easy to verify that $P_k1=0  $ for all $k\in \Bbb Z$.  Thus by Lemma \ref{ChJ}, we get for $1<p<\infty$
$$\bigg\|\bigg(\sum_{k\in \Bbb Z}|P_k f|^2\bigg)^{1/2}\bigg\|_{L^p}\le C\|\nabla b\|_{L^\infty}\|f\|_{L^p}$$  and the weak type $(1,1)$ estimates for $(\sum_{k\in \Bbb Z}|P_k f|^2)^{1/2}.$ The easy fact $\ell^2$ embeds into $\ell^{2,\infty}$ implies $$\lambda\sqrt{N_\lambda(\{P_kf\}_{k\in \Bbb Z})(x)}\le C\bigg(\sum_{k\in \Bbb Z}|P_k f(x)|^2\bigg)^{1/2},$$
then gives the desired $L^p$ bounds and weak type $(1,1)$ bounds for $\lambda\sqrt{N_\lambda(\{P_kf\}_{k\in \Bbb Z})}.$
On the other hand,  since
$[b,\nabla\Phi_k]1
=-\Phi_k\big(\nabla b\big),
$ then $$([b,\nabla\Phi_k]1)\Phi_kf=-\Phi_k\big(\nabla b\big)\Phi_kf.$$
 Apply Theorem  \ref{Phi0}, we have
\begin{equation}\label{phip}
\|\lambda\sqrt{N_\lambda(\{\Phi_k \big(\nabla b\big)\Phi_kf\}_{k})}\|_{L^p}\le C\| \nabla b\|_{L^\infty}\|f\|_{L^p},\,\,\, 1<p<\infty
\end{equation} and \begin{equation}\label{phiweak}\alpha|\{x\in {\Bbb R}^n: \lambda\sqrt{N_\lambda(\{\Phi_k \big(\nabla b\big)\Phi_kf\}_{k})(x)}>\alpha\}|\le C\|\nabla b\|_{L^\infty}\|f\|_{L^1}\end{equation} uniformly in $\lambda>0.$
Combined these estimates,  we get that $\lambda\sqrt{N_\lambda(\{[b,\nabla \Phi_k]f\}_k)}$ is bounded on $L^p({\Bbb R}^n)$ and is of weak type $(1,1)$ if $b\in Lip({\Bbb R}^n).$

Now we turn to prove that $\lambda\sqrt{N_\lambda(\nabla \mathscr F_bf )}$ is bounded on $L^p({\Bbb R}^n)$ for $1<p<\infty$ and is of weak type $(1,1)$ if $b\in Lip({\Bbb R}^n).$
Write
$$\nabla[b, \Phi_k]f=[b,\nabla \Phi_k]f-[b,\nabla]\Phi_kf=[b,\nabla \Phi_k]f+(\nabla b)\Phi_kf.$$
So, we need only to prove \begin{equation}\label{phip}
\|\lambda\sqrt{N_\lambda(\{ \big(\nabla b\big)\Phi_kf\}_{k})}\|_{L^p}\le C\| \nabla b\|_{L^\infty}\|f\|_{L^p},\,\,\,1<p<\infty
\end{equation} and \begin{equation}\label{phiweak}\alpha|\{x\in {\Bbb R}^n: \lambda\sqrt{N_\lambda(\{ \big(\nabla b\big)\Phi_kf\}_{k})(x)}>\alpha\}|\le C\|\nabla b\|_{L^\infty}\|f\|_{L^1} \end{equation} uniformly in $\lambda>0.$ Note that $\nabla b\in L^\infty({\Bbb R}^n),$  therefore \eqref{phip} and \eqref{phiweak} can be obtained by the very same argument in \cite{JSW08}.
Therefore we finish the proof of Theorem  \ref{Phi1}.\qed

\section{Some more lemmas for Theorem \ref{thm:H}} In this section, we present three more lemmas, which will play a key role in proving Theorem \ref{thm:H}.

\begin{lemma}\label{Deltacom} Let $\varphi\in {\mathscr
S}({\Bbb R}^n)$ be a radial function such that ${\rm supp}\,\varphi
\subset\{1/2\le |\xi|\le 2\}$ and $\widehat{\Delta_jf}(\xi)=\varphi(2^{-j}\xi)\widehat{f}(\xi)$ for $j\in \Bbb Z$.  If $b\in Lip(\mathbb R^n),$ then for  $1<p<\infty$ and
$f\in L^p({\Bbb R}^n)$, we have
$$\bigg\|\bigg(\dsum_{l\in \Bbb
Z}|2^l[b,\Delta_{l}]f|^2\bigg)^{1/2}\bigg\|_{L^p}\le
C_{n,p}\|\nabla b\|_{L^\infty}\|f\|_{L^p}.$$
\end{lemma}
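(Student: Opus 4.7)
The plan is to exploit the commutator structure through a first-order Taylor expansion of $b$ at the point $x$. Writing the kernel of $[b,\Delta_l]$ as $\check{\varphi}_l(x-y)(b(x)-b(y))$ and splitting
\[
b(x)-b(y)=\nabla b(x)\cdot(x-y)+R(x,y),\qquad |R(x,y)|\le 2\|\nabla b\|_{L^\infty}|x-y|,
\]
I decompose
\[
2^l[b,\Delta_l]f(x)=\sum_{j=1}^n\partial_j b(x)\,\tilde{\Delta}_l^{(j)}f(x)+E_l f(x),
\]
where $\tilde{\Delta}_l^{(j)}f(x):=2^l\int\check{\varphi}_l(x-y)(x-y)_j f(y)\,dy$ has Fourier multiplier $i(\partial_j\varphi)(2^{-l}\xi)$, and the error term has kernel $K_l^{E}(x,y):=2^l\check{\varphi}_l(x-y)R(x,y)$.

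Since $\partial_j\varphi$ is smooth and supported in the annulus $\{1/2\le|\xi|\le 2\}$, the standard vector-valued Littlewood-Paley square function inequality gives $\|(\sum_{l,j}|\tilde{\Delta}_l^{(j)}f|^2)^{1/2}\|_{L^p}\le C_{n,p}\|f\|_{L^p}$. Combined with the pointwise bound $|\nabla b|\le \|\nabla b\|_{L^\infty}$ and Cauchy-Schwarz in $j$, this controls the principal part.

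For the error, I would view $\{E_l\}_l$ as a single $\ell^2$-valued singular integral operator. Using the Schwartz decay of $\check{\varphi}$ together with $|R(x,y)|\le 2\|\nabla b\|_{L^\infty}|x-y|$, $|R(x,y)-R(x,y')|\le 2\|\nabla b\|_{L^\infty}|y-y'|$, and the analogous estimate in the $x$-variable, a direct dyadic summation shows the $\ell^2$-valued kernel $K(x,y)=\{K_l^{E}(x,y)\}_l$ satisfies the size bound $\|K(x,y)\|_{\ell^2}\le C\|\nabla b\|_{L^\infty}|x-y|^{-n}$ and both H\"ormander-type smoothness conditions. By the vector-valued Calder\'on-Zygmund theorem, the desired $L^p(\ell^2)$ estimate then reduces to the endpoint $L^2(\ell^2)$ bound for $\{E_l\}_l$. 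To obtain this $L^2$ estimate, I would further decompose $b=S_lb+(b-S_lb)$: the low-frequency part $S_lb$ admits a second-order Taylor expansion controlled by Bernstein's inequality $\|\nabla^2 S_lb\|_{L^\infty}\le C2^l\|\nabla b\|_{L^\infty}$, while the high-frequency complement $b-S_lb=\sum_{k>l}\Delta_kb$ satisfies $\|b-S_lb\|_{L^\infty}\le C2^{-l}\|\nabla b\|_{L^\infty}$ and can be analyzed through the Fourier-support cancellations of $[\Delta_kb,\Delta_l]$ for $k>l$.

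The main obstacle is precisely this $L^2(\ell^2)$ estimate on the error: since $b$ is only Lipschitz (not $C^2$), the naive pointwise bound $|E_lf(x)|\le C\|\nabla b\|_{L^\infty}Mf(x)$ is independent of $l$ and sums to $+\infty$ in $\ell^2$, so one cannot proceed by a crude maximal-function majorization. Quantitatively exploiting the high-frequency cancellations of $\Delta_kb$ across different scales $k,l$, in the spirit of the paraproduct analysis used earlier in the paper, is needed to close the estimate at the endpoint Lipschitz regularity.
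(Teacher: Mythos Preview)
Your overall architecture---view $\{2^l[b,\Delta_l]\}_l$ as an $\ell^2$-valued Calder\'on--Zygmund operator, verify the kernel regularity, and reduce everything to an $L^2(\ell^2)$ endpoint---is exactly the route the paper takes. The paper, however, skips the Taylor step entirely: it works directly with the full kernel $k_l(x,y)=2^l(b(x)-b(y))\Psi_{2^{-l}}(x-y)$, checks in one line that
\[
\Big(\sum_{l}|k_l(x+h,y)-k_l(x,y)|^2\Big)^{1/2}+\Big(\sum_{l}|k_l(x,y+h)-k_l(x,y)|^2\Big)^{1/2}\le C\|\nabla b\|_{L^\infty}\frac{|h|}{|x-y|^{n+1}}
\]
for $2|h|\le|x-y|$, and then simply \emph{cites} the $L^2$ bound (Lemma~2.3 of \cite{CD1}) rather than proving it. So your final paragraph is not needed here: the $L^2(\ell^2)$ estimate is available in the literature and the paper treats it as a black box.

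Your Taylor decomposition is a detour that actually introduces an error. You claim ``the analogous estimate in the $x$-variable'' for the remainder $R(x,y)=b(x)-b(y)-\nabla b(x)\cdot(x-y)$, i.e.\ $|R(x,y)-R(x',y)|\le C\|\nabla b\|_{L^\infty}|x-x'|$. But
\[
R(x,y)-R(x',y)=\bigl(b(x)-b(x')-\nabla b(x)\cdot(x-x')\bigr)-\bigl(\nabla b(x)-\nabla b(x')\bigr)\cdot(x'-y),
\]
and the second term is of size $|\nabla b(x)-\nabla b(x')|\,|x'-y|$, which has no smallness in $|x-x'|$ when $b$ is merely Lipschitz. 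Thus the $x$-regularity of the \emph{error} kernel fails, and your $\ell^2$-valued H\"ormander condition for $\{K_l^E\}$ is not verified in the $x$-variable. The fix is precisely what the paper does: drop the Taylor split and check the kernel conditions for the undecomposed family, where $b(x)-b(y)$ has the symmetric Lipschitz bound $|(b(x+h)-b(y))-(b(x)-b(y))|=|b(x+h)-b(x)|\le\|\nabla b\|_{L^\infty}|h|$ in both variables. After that, quote the $L^2$ bound and you are done.
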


\emph{Proof.} Let $\widehat{\Psi}=\varphi$ and $\Psi_{2^{-j}}(x)=2^{jn}\Psi(2^j x),$ then $\Delta_j f=\Psi_{2^{-j}}\ast f.$  Let
\begin{equation}\nonumber k_j(x,y)=2^{j}(b(x)-b(y))\Psi_{2^{-j}}(x-y).\end{equation}
Define the operator $\mathbb{T}$ by
\begin{equation}\nonumber\mathbb{T}f(x)=\int_{\mathbb{R}^n}\mathbb{K}(x,y)f(y)dy,\end{equation}
where $ \mathbb{K}: (x,y)\rightarrow\{k_j(x,y)\}_{j\in\Bbb Z}$
 with $\|\mathbb{K}(x,y)\|_{\mathbf{\Bbb R}^n\times \mathbf{\Bbb R}^n\rightarrow \ell^2}:=\big(\sum_{j\in \Bbb Z}|k_j(x,y)|^2\big)^{1/2}.$   Lemma 2.3 in \cite{CD1} says that $$\|\mathbb{T}f\|_{L^2(\ell^2)}\leq C \|\nabla b\|_{L^\infty}\|f\|_{L^2}.  $$
  On the other hand, for $b\in Lip({\Bbb R}^n)$, it is easy to verify that for $2|h|\le |x-y|, $\begin{equation}\nonumber\max\bigg\{\bigg(\dsum_{j\in \Bbb Z}|k_j(x,y+h)-k_j(x,y)|^2\bigg)^{1/2},\,\bigg(\dsum_{j\in \Bbb Z}|k_j(x+h,y)-k_j(x,y)|^2\bigg)^{1/2}\bigg\}\le C\| b\|_{Lip}\frac{|h|}{|x-y|^{n+1}}.\end{equation}  Then by the result in \cite{DR1, GR, GR1}, we get the desired result.\qed

\begin{lemma}\label{Tsk} Let $\Omega\in L^1(\Bbb S^{n-1})$ and satisfy the mean value zero property. For $k\in \Bbb Z,$ set
   $\nu_{k}(x)=\frac{\Omega(x)}
   {|x|^{n+1}}\chi_{\{2^k\le |x|
   < 2^{k+1}\}}(x)$ and $T_kf=\nu_{k}\ast f.$ Then we have for $1<p<\infty,$
\begin{align}\nonumber\bigg\|\bigg(\dsum_{k\in\mathbb Z}|T_kf_k|^2\bigg)^{1/2}\bigg\|_{L^p}
&\le
C_{n,p}\|\Omega\|_{L^1(\mathbf S^{n-1})}\biggl\|\bigg(\dsum_{k\in\mathbb Z}|\nabla f_k|^2\bigg)^{1/2}\biggl\|_{L^p}.
\end{align} \end{lemma}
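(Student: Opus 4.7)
The strategy is to exploit the mean-value zero of $\Omega$ (and hence of $\nu_k$) to trade the kernel's extra decay $|x|^{-(n+1)}$ for a derivative on $f_k$, thereby reducing matters to a vector-valued maximal inequality. First note that $\int \nu_k \, dx = \bigl(\int_{\mathbf S^{n-1}} \Omega \, d\sigma\bigr)\bigl(\int_{2^k}^{2^{k+1}} r^{-2}\,dr\bigr) = 0$, so
$$T_k f(x) = \int_{\Bbb R^n} \nu_k(z)[f(x-z) - f(x)] \, dz.$$
Inserting the identity $f(x-z) - f(x) = -\int_0^1 z \cdot \nabla f(x-sz) \, ds$ and then changing variables $u=sz$ gives
$$T_k f(x) = -\int_0^1 \bigl(\tilde\nu_k^{(s)} * \nabla f\bigr)(x) \, ds,$$
where $\tilde\nu_k(z) := z\,\nu_k(z) = \frac{\Omega(z)\,z}{|z|^{n+1}} \chi_{\{2^k \le |z| < 2^{k+1}\}}(z)$ and $\tilde\nu_k^{(s)}(u) := s^{-n}\tilde\nu_k(u/s)$ is supported in the annulus $\{s2^k \le |u| < s 2^{k+1}\}$. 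The gain is that $|\tilde\nu_k^{(s)}(u)| = \frac{|\Omega(u/|u|)|}{|u|^n}\chi_{\{s2^k \le |u| < s 2^{k+1}\}}(u)$, hence $\|\tilde\nu_k^{(s)}\|_{L^1} = \log 2 \cdot \|\Omega\|_{L^1(\mathbf S^{n-1})}$ uniformly in $k$ and $s$.

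By Minkowski's integral inequality in $s$, it suffices to prove, uniformly in $s \in (0,1)$, the vector-valued bound
$$\bigg\|\bigg(\sum_{k\in\Bbb Z} |\tilde\nu_k^{(s)} * g_k|^2\bigg)^{1/2}\bigg\|_{L^p} \le C \|\Omega\|_{L^1(\mathbf S^{n-1})} \bigg\|\bigg(\sum_{k\in\Bbb Z} |g_k|^2\bigg)^{1/2}\bigg\|_{L^p}$$
for vector fields $(g_k)$, which one then specializes to $g_k = \nabla f_k$. Since $|u| \ge s2^k$ on the support of $\tilde\nu_k^{(s)}$, we have the pointwise domination $|\tilde\nu_k^{(s)} * g_k|(x) \le 2^n M_\Omega |g_k|(x)$, where
$$M_\Omega g(x) := \sup_{r > 0} r^{-n} \int_{|y|<r} |\Omega(y/|y|)|\,|g(x-y)| \, dy.$$
Thus the lemma reduces to the $L^p(\ell^2)$-boundedness of $M_\Omega$ with operator norm $\lesssim \|\Omega\|_{L^1(\mathbf S^{n-1})}$.

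Passing to polar coordinates and using $\rho \le r$ gives
$$M_\Omega g(x) \le \int_{\mathbf S^{n-1}} |\Omega(\theta)|\, M_\theta g(x) \, d\sigma(\theta),$$
where $M_\theta g(x) := \sup_{r>0} r^{-1} \int_0^r |g(x-\rho\theta)| \, d\rho$ is the one-dimensional Hardy-Littlewood maximal operator along direction $\theta$. Fibering $\Bbb R^n$ by lines parallel to $\theta$ and applying the classical Fefferman-Stein vector-valued maximal inequality on each fiber (then integrating over the orthogonal hyperplane) yields $\|(\sum_k |M_\theta g_k|^2)^{1/2}\|_{L^p} \le C_p \|(\sum_k |g_k|^2)^{1/2}\|_{L^p}$, uniformly in $\theta\in\mathbf S^{n-1}$. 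A final Minkowski inequality in $L^p$ with weight $|\Omega(\theta)|\,d\sigma(\theta)$ produces the required bound. The main conceptual step is the opening cancellation-for-derivative trade, which lowers the effective homogeneity of the kernel from $-(n+1)$ to $-n$ and renders $\tilde\nu_k^{(s)}$ a uniformly bounded $L^1$ kernel supported in a dyadic annulus; thereafter the argument is routine, and no smoothness of $\Omega$ beyond integrability on the sphere is required.
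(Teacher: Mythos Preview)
Your proof is correct and follows essentially the same route as the paper's: both use the mean-value zero of $\Omega$ to subtract $f_k(x)$, apply the fundamental theorem of calculus to trade the extra $|y|^{-1}$ decay for $\nabla f_k$, change variables to land on a kernel of size $|\Omega(y')|/|y|^n$ supported in a dyadic annulus, and then dominate pointwise by $M_\Omega(\nabla f_k)$. The only difference is cosmetic---the paper works in polar coordinates from the outset, while you stay in Cartesian form---and that the paper cites the $L^p(\ell^2)$ bound for $M_\Omega$ from \cite{CD}, whereas you supply the standard method-of-rotations/Fefferman--Stein argument for it.
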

\emph{
Proof.} By the mean value zero property of $\Omega$,  we have for $t\in \Bbb R_{+}$
\begin{align}\nonumber\biggl|\dint_{S^{n-1}}\Omega(y')
f(x-ty')d\sigma(y')\biggl|
&=\biggl|\dint_{S^{n-1}}\Omega(y')
\Big(f(x-ty')-f(x)\Big)d\sigma(y')\biggl|\\
&\le\nonumber\dsum_{|\beta|=1}\dint_0^1\dint_{S^{n-1}}|\Omega(y')|
|D^\beta f(x+sty')|t d\sigma(y')\,ds.
\end{align}
Then, for $\{f_k\}_{k\in\Bbb Z},$ by Lemma 2.3
 in \cite{CD}, we have for $1<p<\infty$
\begin{align} \nonumber\bigg\|\bigg(\dsum_{k\in\mathbb Z}|T_kf_k|^2\bigg)^{1/2}\bigg\|_{L^p}&=\nonumber\biggl\|
\bigg(\dsum_{k\in\mathbb Z}\bigg|\dint_{2^{k-1}}^{2^k}
\biggl|\dint_{S^{n-1}}\Omega(y')
f_k(\cdot-ty')d\sigma(y')\biggl|\frac{dt}{t^{2}}\bigg|^2\bigg)^{1/2}\biggl\|_{L^p}\\
&\leq\nonumber
C\dsum_{|\beta|=1}\dint_0^1\biggl\|\bigg(\dsum_{j\in\mathbb Z}\bigg|\dint_{2^{k-1}}^{2^k}
\biggl|
\dint_{S^{n-1}}|\Omega(y')|
|D^\beta f_k(x+sty')| d\sigma(y')\frac{dt}{t}\bigg|^2\bigg)^{1/2}\biggl\|_{L^p}\,ds\\
&\leq\nonumber
C\dsum_{|\beta|=1}\dint_0^1\biggl\|\bigg(\dsum_{j\in\mathbb Z}\bigg|\dint_{2^{k-1}\le |y|< 2^{k}}
\frac{|\Omega(y')|}{|y|^n}
|D^\beta f_k(x+sy)| dy\bigg|^2\bigg)^{1/2}\biggl\|_{L^p}\,ds\\
&=\nonumber
C\dsum_{|\beta|=1}\dint_0^1\biggl\|\bigg(\dsum_{k\in\mathbb Z}\bigg|\dint_{s2^{k-1}\le |y|< s2^{k}}
\frac{|\Omega(y')|}{|y|^n}
|D^\beta f_k(x+y)| dy\bigg|^2\bigg)^{1/2}\biggl\|_{L^p}\,ds\\
&\le\nonumber
C\dsum_{|\beta|=1}\biggl\|\bigg(\dsum_{k\in\mathbb Z}|M_\Omega (D^\beta f_k)|^2\bigg)^{1/2}\biggl\|_{L^p}\\
&\le\nonumber
C\|\Omega\|_{L^1(\mathbf S^{n-1})}\biggl\|\bigg(\dsum_{k\in\mathbb Z}|\nabla f_k|^2\bigg)^{1/2}\biggl\|_{L^p},
\end{align} where
$$M_{\Omega}f(x)=\sup_{r>0}\frac{1}{r^n}\dint_{|x-y|<r}|\Omega(x-y)||f(y)|\,dy.$$\qed

 \begin{lemma}\label{phicom} Let $\phi\in {\mathscr
S}({\Bbb R}^n)$ and $\Phi_kf(x)=\phi_k\ast f(x),$ where $\phi_k(x)=2^{-kn}\phi(2^{-k}x).$ Then for $1<p<\infty,$ \begin{align}\nonumber\bigg\|\bigg(\dsum_{k\in \mathbb Z}|\nabla[b,\Phi_{k}]f_k|^2\bigg)^{1/2}\bigg\|_{L^p}
&\le
C\|\nabla b\|_{L^\infty}\biggl\|\bigg(\dsum_{k\in\mathbb Z}|f_k|^2\bigg)^{1/2}\biggl\|_{L^p}.
\end{align}\end{lemma}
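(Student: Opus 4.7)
The plan is to combine the algebraic decomposition of $\nabla[b,\Phi_k]$ from the proof of Theorem \ref{Phi1} with pointwise maximal bounds, and then invoke the Fefferman--Stein vector-valued maximal inequality. Specifically, I would start from
$$\nabla[b,\Phi_k]f_k = (\nabla b)\,\Phi_k f_k + [b,\nabla\Phi_k]f_k = (\nabla b)\,\Phi_k f_k + P_k f_k - \Phi_k(\nabla b)\,\Phi_k f_k,$$
where $P_k f := [b,\nabla\Phi_k]f - ([b,\nabla\Phi_k]1)\,\Phi_k f$ and $[b,\nabla\Phi_k]1 = -\Phi_k(\nabla b)$, exactly as in the proof of Theorem \ref{Phi1}. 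Minkowski's inequality in $\ell^2$ reduces the desired bound to controlling the three $L^p$-norms of
$$S_1 = \bigg(\sum_k|(\nabla b)\Phi_k f_k|^2\bigg)^{1/2},\quad S_2=\bigg(\sum_k|\Phi_k(\nabla b)\,\Phi_k f_k|^2\bigg)^{1/2},\quad S_3=\bigg(\sum_k|P_k f_k|^2\bigg)^{1/2}$$
by $C\|\nabla b\|_{L^\infty}\big\|(\sum_k|f_k|^2)^{1/2}\big\|_{L^p}$.

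For $S_1$ and $S_2$, I would use the pointwise estimate $|\Phi_k f_k(x)|\le C\,Mf_k(x)$, valid since $\phi\in\mathscr{S}(\mathbb{R}^n)$, where $M$ denotes the Hardy--Littlewood maximal operator, together with the bound $\|\Phi_k(\nabla b)\|_{L^\infty}\le \|\phi\|_{L^1}\|\nabla b\|_{L^\infty}$. These immediately give
$$S_1+S_2\le C\|\nabla b\|_{L^\infty}\bigg(\sum_k(Mf_k)^2\bigg)^{1/2},$$
and the Fefferman--Stein vector-valued maximal inequality closes both pieces in $L^p$ for $1<p<\infty$.

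For $S_3$, the crucial point is that the kernel $\psi_k(x,y)$ of $P_k$, already computed explicitly in the proof of Theorem \ref{Phi1}, satisfies the approximate-identity bound
$$|\psi_k(x,y)|\le C\|\nabla b\|_{L^\infty}\,\frac{2^{-kn}}{(1+2^{-k}|x-y|)^{n+1}}.$$
This follows at once from $|b(x)-b(y)|\le\|\nabla b\|_{L^\infty}|x-y|$ and the Schwartz decay of $\phi$ and $\nabla\phi$, and it yields the pointwise domination $|P_k f_k(x)|\le C\|\nabla b\|_{L^\infty}Mf_k(x)$. A second application of Fefferman--Stein then controls $\|S_3\|_{L^p}$ by the right-hand side.

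The only delicate point is the treatment of $S_3$: the scalar square-function bound for $\{P_k f\}_k$ used in the proof of Theorem \ref{Phi1} exploited the cancellation $P_k 1 = 0$ through Lemma \ref{ChJ}, but the diagonal vector-valued inequality sought here does not require any cancellation between different indices $k$. A uniform pointwise majorant by $\|\nabla b\|_{L^\infty}M$ for each individual $P_k$ is enough, which bypasses any Littlewood--Paley or almost-orthogonality machinery and keeps the proof short.
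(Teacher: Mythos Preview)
Your proof is correct and follows the same strategy as the paper: split $\nabla[b,\Phi_k]$ via the Leibniz rule, dominate each piece pointwise by $C\|\nabla b\|_{L^\infty}Mf_k$, and apply the Fefferman--Stein vector-valued maximal inequality. The paper's argument is slightly more direct, though: after writing $\nabla[b,\Phi_k]f_k=[b,\nabla\Phi_k]f_k-[b,\nabla]\Phi_kf_k$, it bounds $|[b,\nabla\Phi_k]f_k(x)|\le C\|\nabla b\|_{L^\infty}Mf_k(x)$ in a single step (using $|b(x)-b(y)|\le\|\nabla b\|_{L^\infty}|x-y|$ and the Schwartz decay of $\nabla\phi$), so your further decomposition of $[b,\nabla\Phi_k]f_k$ into $P_kf_k$ and $-\Phi_k(\nabla b)\,\Phi_kf_k$ is an unnecessary detour---the $P_k$ machinery from the proof of Theorem~\ref{Phi1} is not needed here.
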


\emph{
Proof.} Write $\nabla[b,\Phi_{k}]f=[b, \nabla \Phi_k]f-[b,\nabla]\Phi_kf.$ Then by $[b, \nabla]f=-(\nabla b)f$ and $\{\Phi_k\}$ is bounded on $L^p(\ell^2)({\Bbb R}^n),$ we get   \begin{align}\nonumber\bigg\|\bigg(\dsum_{k\in \mathbb Z}|\nabla[b,\Phi_{k}]f_k|^2\bigg)^{1/2}\bigg\|_{L^p}
&\le
C\bigg\|\bigg(\dsum_{k\in \mathbb Z}|[b,\nabla\Phi_{k}]f_k|^2\bigg)^{1/2}\bigg\|_{L^p}+C\bigg\|\bigg(\dsum_{k\in \mathbb Z}|[b,\nabla]\Phi_{k}f_k|^2\bigg)^{1/2}\bigg\|_{L^p}\\&\le
\nonumber C\bigg\|\bigg(\dsum_{k\in \mathbb Z}|[b,\nabla\Phi_{k}]f_k|^2\bigg)^{1/2}\bigg\|_{L^p}+C\|\nabla b\|_{L^\infty}\bigg\|\bigg(\dsum_{k\in \mathbb Z}|f_k|^2\bigg)^{1/2}\bigg\|_{L^p}.
\end{align}
Note that $\nabla\phi_{k}(x)=2^{-k}(\nabla \phi)_k(x)$ and denote by $\nabla\phi=\widetilde{\phi}, $ we get \begin{align}\nonumber|[b,\nabla\Phi_{k}]f_k(x)|&\le 2^{-k(n+1)}\dint_{{\Bbb R}^n}|\widetilde{\phi}(2^{-k}(x-y))||b(x)-b(y)||f_k(y)|\,dy\\&\le \nonumber C\|\nabla b\|_{L^\infty}2^{-k(n+1)}\dint_{{\Bbb R}^n}|\widetilde{\phi}(2^{-k}(x-y))||x-y||f_k(y)|\,dy\\&\le \nonumber C\|\nabla b\|_{L^\infty}Mf_k(x).\end{align}  Therefore by the $L^p(\ell^2)({\Bbb R}^n)$-boundedness of $M$ (see \cite{FS}), we get the desired result. \qed

\begin{lemma}\label{CD1}\, Let $M_{s,\delta,j} \in C_0^\infty(\mathbb{R}^n)(0<\delta <
\infty)$  for any fixed $s,\,j \in \mathbb Z,$ and $T_{s,\delta,j}$ be the multiplier operator defined by
$\widehat{T_{s,\delta,j} f}(\xi)= M_{s,\delta,j}(\xi)\widehat{f}(\xi).$ Let $b \in
Lip(\mathbb R^n)$ and  $[b, T_{s,\delta,j}]$ be the commutator of $T_{s,\delta,j},$ which is defined by $$[b, T_{s,\delta,j}]f(x)=b(x)T_{s,\delta,j}f(x)-T_{s,\delta,j}(bf)(x).$$If for some positive constant $\beta$ and any fixed multi-index $\alpha$ with $|\alpha|=2,$
$$
\|M_{s,\delta,j}\|_{L^\infty} \leq
C2^{-j}\min \{2^{-(1+\beta) s},2^{ s}\}\min\{\delta^2,\delta^{-\beta}\},\,\,\,\,\|\partial^\alpha M_{s,\delta,j}\|_{L^\infty} \leq C2^{-j}2^{ s},
$$
then there exist some constants $0<\lambda,\gamma<1$ such that
$$\begin{array}{cl}
\|[b,T_{s,\delta,j}]f\|_{L^2} \leq C  2^{-j}\min \{2^{-\gamma s},2^{ s}\}\min\{\delta ^{2\lambda}, \delta
^{-\beta \lambda}\}\|b\|_{Lip} \|f\|_{L^2},
\end{array}$$where $C$ is independent of $s,\,\delta$ and $j$.\end{lemma}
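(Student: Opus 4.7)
The plan is to derive the estimate by combining two endpoint $L^{2}$-bounds for $[b,T_{s,\delta,j}]$ and then taking their geometric mean. Specifically, I would first prove
$$\mathrm{(A)}\quad \|[b,T_{s,\delta,j}]f\|_{L^{2}}\le C\,2^{-j}\min\{2^{-(1+\beta)s},2^{s}\}\min\{\delta^{2},\delta^{-\beta}\}\,\|b\|_{Lip}\|f\|_{L^{2}}$$
from the first hypothesis on $\|M_{s,\delta,j}\|_{L^{\infty}}$, and
$$\mathrm{(B)}\quad \|[b,T_{s,\delta,j}]f\|_{L^{2}}\le C\,2^{-j}\cdot 2^{s}\,\|b\|_{Lip}\|f\|_{L^{2}}$$
from the second hypothesis on $\|\partial^{\alpha}M_{s,\delta,j}\|_{L^{\infty}}$ with $|\alpha|=2$. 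Granted (A) and (B), the elementary inequality $\min\{X,Y\}\le X^{\lambda}Y^{1-\lambda}$ applied with any $\lambda\in(\tfrac{1}{2+\beta},\tfrac{2}{2+\beta})\cap(0,1)$ yields the announced bound with $\gamma:=\lambda(2+\beta)-1\in(0,1)$: a short computation separating $s\ge 0$ from $s<0$ shows that the geometric mean of (A) and (B) is precisely $C\,2^{-j}\min\{2^{-\gamma s},2^{s}\}\min\{\delta^{2\lambda},\delta^{-\beta\lambda}\}\|b\|_{Lip}\|f\|_{L^{2}}$, since $(1+\beta)\lambda s-(1-\lambda)s=\gamma s$ in the exponent and the powers of $\delta$ and $2^{-j}$ combine correctly.

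For the endpoint estimates I would start from the spatial representation
$$[b,T_{s,\delta,j}]f(x)=\int K_{s,\delta,j}(x-y)\bigl(b(x)-b(y)\bigr)f(y)\,dy,\qquad K_{s,\delta,j}:=\check M_{s,\delta,j},$$
together with the Lipschitz identity $b(x)-b(y)=\int_{0}^{1}(x-y)\cdot\nabla b(y+t(x-y))\,dt$. Substituting the latter reduces the commutator to an average over $t\in[0,1]$ of bilinear operators whose kernels involve $z_{i}K_{s,\delta,j}(z)$ (whose Fourier symbol is $i\partial_{\xi_{i}}M_{s,\delta,j}$) paired with $\partial_{i}b$ evaluated along the segment from $y$ to $x$. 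These bilinear forms in $(\nabla b,f)$ are handled by Schur's test together with Plancherel, exactly along the lines of Lemma~2.3 of \cite{CD1} and of Lemma~\ref{Deltacom} above. For (A) the first-order derivative of the symbol is controlled through the pointwise bound on $M_{s,\delta,j}$ and the compact frequency support implicit in $M_{s,\delta,j}\in C_{0}^{\infty}$; for (B) one integration by parts in $\xi$ brings the bound on $\partial^{\alpha}M_{s,\delta,j}$ into play and produces the $2^{s}$ factor without any $\delta$-gain.

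The main obstacle will be isolating the Lipschitz norm $\|b\|_{Lip}$ rather than $\|b\|_{L^{\infty}}$ (which is not assumed finite) in bound (A): one cannot simply combine the trivial estimate $\|T_{s,\delta,j}\|_{L^{2}\to L^{2}}\le \|M_{s,\delta,j}\|_{L^{\infty}}$ with a multiplication by $b$, since $b$ need not be bounded. The way around this is precisely the ``$(x-y)$-against-$\nabla b$'' exchange in the spatial representation above, which trades the missing boundedness of $b$ for one derivative on the symbol side; once this trade-off is in place, the bound $\|M_{s,\delta,j}\|_{L^{\infty}}$ (together with the frequency localization of $M_{s,\delta,j}$) suffices to close the argument, mirroring the commutator estimate of Chen--Ding in \cite{CD1}. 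With (A) and (B) established, the geometric-mean interpolation described in the first paragraph completes the proof.
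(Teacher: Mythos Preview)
Your plan hinges on establishing the two endpoint bounds (A) and (B) separately, but neither can be proved from the corresponding single hypothesis. For (A) you write that ``the first-order derivative of the symbol is controlled through the pointwise bound on $M_{s,\delta,j}$ and the compact frequency support implicit in $M_{s,\delta,j}\in C_0^\infty$''. This is false: knowing that $M\in C_0^\infty$ and $\|M\|_{L^\infty}\le A$ gives no uniform control on $\|\partial_\xi M\|_{L^\infty}$ (a tiny bump can have huge slope), and the lemma does not assume the supports are contained in a fixed ball independent of $s,\delta,j$; in the applications (Sections 6--7) the diameter of $\mathrm{supp}\,M_{s,\delta,j}$ grows like $\delta$. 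Since after your substitution $b(x)-b(y)=\int_0^1(x-y)\cdot\nabla b$ the relevant symbol is $\partial_\xi M$, estimate (A) simply does not follow. The same objection applies to (B): a bound on $\|\partial^\alpha M\|_{L^\infty}$ for $|\alpha|=2$ alone does not control $\|\partial_\xi M\|_{L^\infty}$ either. (A Landau--Kolmogorov inequality $\|\partial_\xi M\|_\infty\lesssim\|M\|_\infty^{1/2}\|\partial^2 M\|_\infty^{1/2}$ would use both hypotheses simultaneously and give a single intermediate bound, not your two endpoints; moreover your bilinear expression with $\nabla b$ evaluated at $y+t(x-y)$ is not a convolution, so ``Plancherel'' does not apply as stated.)

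The paper's proof avoids this difficulty by a different mechanism: it decomposes the kernel $K_{s,\delta,j}=\check M_{s,\delta,j}$ dyadically in physical space, writing $K_{s,\delta,j}=\sum_{l\ge0}K_{s,\delta,j}^l$ with $K_{s,\delta,j}^l$ supported in $\{|x|\sim 2^l\}$. For each piece the two hypotheses are interpolated \emph{at the multiplier level} (Taylor expansion against the vanishing moments of the cutoff gives $\|\widehat{K^l}\|_\infty\lesssim 2^{-2l}2^{-j}2^s$ from $\|\partial^\alpha M\|_\infty$, while Young gives $\|\widehat{K^l}\|_\infty\lesssim\|M\|_\infty$), producing $\|\widehat{K^l}\|_\infty\lesssim 2^{-2\theta l}\cdot(\text{good factors})$ with $\theta>1/2$. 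The point is that the \emph{spatial} localisation at scale $2^l$ lets one replace $b$ by $\tilde b=(b-b_{\tilde Q})\psi$, which is bounded with $\|\tilde b\|_\infty\le 2^l\|b\|_{Lip}$; the commutator $[b,T^l]$ is then estimated trivially as $\|\tilde b\,T^l f\|_2+\|T^l(\tilde b f)\|_2$. The $2^l$ loss from $\|\tilde b\|_\infty$ is beaten by the $2^{-2\theta l}$ gain, and the sum over $l$ converges. This local--$b$--replacement step, made possible by the physical-space decomposition, is the key idea your proposal is missing.
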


{\it  Proof.}
  Taking a $C_0^\infty(\mathbb{R}^n)$ radial
function $\varphi$ with supp $\varphi\subset \{1/2\leq |x| \leq 2\}$ and
$\sum_{l \in \mathbb Z}\varphi(2^{-l}x)=1$ for any $|x|>0.$  Denote
$\varphi_0(x)=\sum_{l=-\infty}^0\varphi (2^{-l}x)$ and $\varphi_l(x) =
\varphi(2^{-l}x),$ for positive integer $l$. Let
$K_{s,\delta,j} (x)=M_{s,\delta,j}^{\vee}(x),$ the inverse Fourier transform of
$M_{s,\delta,j}.$ Splitting $K_{s,\delta,j}$ into
$$\begin{array}{cl}
K_{s,\delta,j}(x)=K_{s,\delta,j}(x)\varphi_0(x)+\dsum_{l=1}^\infty
K_{s,\delta,j}(x)\varphi_l(x)=:\dsum_{l=0}^\infty K_{s,\delta,j}^l(x).
\end{array}$$
Write $$\begin{array}{cl}
\widehat{K_{s,\delta,j} ^l}(x)=\dint_{\mathbb{R}^n} M_{s,\delta,j}
(x-y)\widehat{\varphi_l}(y)\,dy.\end{array}$$Since $\varphi$ is null in a neighbornhood of the origin and a Schwartz function, we have \begin{align}\label{eta}\int_{\mathbb{R}^n}\widehat{\varphi}(\eta)\eta^\vartheta\,d\eta=0\end{align} for any multi-index $\vartheta.$
Then expanding $M_{s,\delta,j}(x)$ into a Taylor series around $x$ and \eqref{eta} gives that
\begin{align}\label{eta1}
\|\widehat{K_{s,\delta,j} ^l}\|_{L^\infty}&\le\dsum_{|\alpha|=2}\|\partial^\alpha M_{s,\delta,j}\|_{L^\infty}\dint_{\mathbb{R}^n} |y|^{2}|\widehat{\varphi_{l}}(y)|\,dy\\&
 \leq \nonumber\dsum_{|\alpha|=2}\|\partial^\alpha M_{s,\delta,j}\|_{L^\infty}
\dint_{\mathbb{R}^n} |{2^{-l}}y|^{2}|\widehat{\varphi}(y)|\,dy \\&
 \leq\nonumber C2^{-j}2^{-2l}2^{ s}\dint_{\mathbb{R}^n}
 |y|^{2}|\widehat{\varphi}(y)|\,dy \\&
 \leq \nonumber C2^{-2l}2^{-j}2^{ s}.
\end{align}
On the other hand, by the Young inequality,
\begin{align}\label{eta2}
\|\widehat{K_{s,\delta,j}
^l}\|_{L^\infty}&=\|\widehat{K_{s,\delta,j}}\ast\widehat{\varphi_l}\|_{L^\infty}\\&\leq \nonumber
\|\widehat{K_{s,\delta,j}}\|_{L^\infty}\|\widehat{\varphi_l}\|_{L^1}\\&\leq\nonumber
C2^{-j} \min \{2^{-(1+\beta) s},2^{ s}\}\min\{\delta^2,\delta^{-\beta}\}.
\end{align}
Therefore, interpolating between \eqref{eta1} and \eqref{eta2}, for each $1/2<\theta<\frac{1+\beta}{2+\beta}$,
\begin{align*}
\|\widehat{K_{s,\delta,j} ^l}\|_{L^\infty} \le C2^{-2\theta
l}2^{- j}
\min\{\delta^{2(1-\theta)},\delta^{-(1-\theta)\beta}\}\min \{2^{\theta s-(1+\beta) (1-\theta)s},2^{s}\}.
\end{align*} Denote by $\gamma:=\theta-(1+\beta) (1-\theta)<0$ and $\lambda:=1-\theta>0,$ we get \begin{align}\label{eta3}
\|\widehat{K_{s,\delta,j} ^l}\|_{L^\infty} \le C2^{-2\theta
l}2^{- j}
\min\{\delta^{2\lambda},\delta^{-\lambda\beta}\}\min \{2^{\gamma s},2^{s}\}.
\end{align}
 Now we turn our attention to $[b,T_{s,\delta,j} ^l]$ the commutator of
the operator $T_{s,\delta,j} ^l.$ Decompose $\mathbb{R}^n$ into a grid
of non-overlapping cubes with side length $2^l.$ That is, $\mathbb{R}^n =
\cup _{d=-\infty}^\infty Q_d.$ Set $f_d = f\chi_{Q_d},$ then
$$
f(x) = \dsum_{d=-\infty}^\infty f_d(x),\qquad      a.e.\ \ x \in \mathbb{R}^n.
$$
 It is obvious
that supp $([b,T_{s,\delta,j}^l]f_d) \subset 2nQ_d$ and that the supports
of $\{[b,T_{s,\delta,j} ^l]f_d \}_{d=-\infty}^{+\infty}$ have bounded
overlaps. So we have the following almost orthogonality property
$$\begin{array}{cl}
\|[b,T_{s,\delta,j} ^l]f\|_{L^2}^2 \leq C\dsum_{d=-\infty}^\infty
\|[b,T_{s,\delta,j} ^l] f_d\|_{L^2}^2.
\end{array}$$
Thus, we may assume that supp $f \subset Q$ for some cube with side
length $2^l.$ Choose $\psi \in C_0^\infty(\mathbb{R}^n)$ with $0 \leq
\psi\leq 1$, supp $\psi
\subset 100nQ$ and $\psi=1,$ when $x \in 30nQ$.  Set $\widetilde{Q}=200nQ$ and $\widetilde{b} =
(b(x)-b_{\widetilde{Q}})\psi(x),$ we can get
$$\begin{array}{cl}
\|[b,T_{s,\delta,j}]f\|_{L^2}\le\dsum_{l\ge 0}\|[b,T_{s,\delta,j} ^l]f\|_{L^2}\leq \dsum_{l\ge 0}\|\widetilde{b}T_{s,\delta,j}^lf\|_{L^2}+\dsum_{l\ge 0}\|T_{s,\delta,j}^l(\widetilde{b}f)\|_{L^2}.
\end{array}$$
By \eqref{eta3} with  $\theta>1/2$ and $\|\widetilde{b}\|_{L^\infty}\le 2^l\|b\|_{Lip}$, we have
$$\begin{array}{cl}
\dsum_{l\ge 0}\|\widetilde{b}T_{s,\delta,j}^lf\|_{L^2} &\leq
\dsum_{l\ge 0}\|\widetilde{b}\|_{L^\infty}\|T_{s,\delta,j}^lf\|_{L^2}\\&\leq
C\dsum_{l\ge 0}2^{(1-2\theta) l}2^{- j}\|b\|_{Lip}
\min\{\delta^{2\lambda},\delta^{-\lambda\beta}\}\min \{2^{\gamma s},2^{s}\}\|f\|_{L^2}\\&\leq
C2^{- j}\|b\|_{Lip}
\min\{\delta^{2\lambda},\delta^{-\lambda\beta}\}\min \{2^{\gamma s},2^{s}\}\|f\|_{L^2}.
\end{array}$$
 Similarly, we can get
$$\begin{array}{cl}
\dsum_{l\ge 0}\|T_{s,\delta,j}^l(\widetilde{b}f)\|_{L^2}&
\leq C2^{- j}\|b\|_{Lip}\min\{\delta^{2\lambda},\,\delta^{-\beta\lambda}\}\min \{2^{\gamma s},2^{s}\}\|f\|_{L^2}.
\end{array}$$
Thus
$$\begin{array}{cl}
\|[b,T_{s,\delta,j}]f\|_{L^2}&\le C2^{- j}\|b\|_{Lip}\min\{\delta^{2\lambda},\,\delta^{-\beta\lambda}\}\min \{2^{\gamma s},2^{s}\}\|f\|_{L^2},
\end{array}$$ where $C$ is independent of $\delta,\,s$ and $j$.
\qed

\section{Proof of Theorem \ref{thm:H} (I) }

As we have stated in the introduction, to prove Theorem \ref{thm:H} it suffices to show \eqref{dyadic jump singular} and \eqref{short variation singular}.
In this section, we give the proof of \eqref{dyadic jump singular}.  For
$j\in\mathbb Z$,  let $\nu_j(x)=\frac{\Omega(x)}{|x|^{n+1}}\chi_{\{2^j\le |x|<2^{j+1}\}}(x)$, then
$$
\nu_j\ast f(x)=\int_{2^j\le |y|<2^{j+1}}\frac{\Omega(y)}{|y|^{n+1}}f(x-y)dy.
$$ Denote by $$T^1f(x)=\hbox{p.v.}\dint_{{\Bbb R}^n}\frac{\Omega(y)}{|y|^{n+1}}f(x-y)dy$$ and for $k\in\Bbb Z$
$$T^1_{2^k}f(x)=\dint_{|x-y|>2^k}\frac{\Omega(y)}{|y|^{n+1}}f(x-y)dy.$$
Let $\phi\in \mathscr{S}(\mathbb R^n)$ be a radial function such that $\hat{\phi}(\xi)=1$ for
$|\xi|\le2$ and $\hat{\phi}(\xi)=0$ for $|\xi|>4$. We have the following decomposition
\begin{align*}
T^1_{2^k}f&=\phi_k\ast T^1f+\sum_{s\ge0}(\delta_0-\phi_k)\ast\nu_{k+s}\ast f-\phi_k\ast\sum_{s<0}\nu_{k+s}\ast
f,
\end{align*}
where $\phi_k$ satisfies $\widehat{\phi_k}(\xi)=\hat{\phi}(2^k\xi)$, $\delta_0$ is the Dirac measure at 0.
Then \begin{align*}
\mathcal{C}_{2^k}f&=[b,\phi_k\ast T^1]f+[b,\sum_{s\ge0}(\delta_0-\phi_k)\ast\nu_{k+s}] f-[b,\phi_k\ast\sum_{s<0}\nu_{k+s}]
f\\
&:=\mathcal{C}^1_{k}f+\mathcal{C}^2_{k}f-\mathcal{C}_{k}^3f.
\end{align*}
Let $\mathscr C^if$ denote the family $\{\mathcal{C}^i_{k}f\}_{k\in\mathbb Z}$  for
$i=1,2,3$. Obviously, to show \eqref{dyadic jump singular} it suffices to prove the following inequalities:
 \begin{align}\label{lnti}
\|\lambda\sqrt{N_\lambda(\mathscr C^if)}\|_{L^p}\le C\|\nabla b\|_{L^\infty}\|f\|_{L^p},\ \ 1<p<\infty,\ \ i=1,2,3,
\end{align}uniformly in $\lambda>0.$
\bigskip

\noindent {\bf Estimation of \eqref{lnti} for $i=1$.} For $k\in \Bbb Z,$ denote by $\Phi_kf(x)=\phi_k\ast f(x)$ and
  write
 $$\mathcal{C}^1_{k}f=[b,\Phi_k] T^1f+\Phi_k [b,T^1]f.$$
Combining Theorem 1.1 in \cite{JSW08} and the $L^p\,(1<p<\infty)$-boundedness of $[b,T^1]$ with bounds $C\|\Omega\|_{L({\log^+L})(\mathbf S^{n-1})}\|\nabla b \|_{L^\infty}$ (see \cite{C1}), we can get the following estimate easily for $1<p<\infty$
\begin{align}\label{phik1}
\big\|\lambda\sqrt{N_\lambda(\{\Phi_k [b, T^1]f\}_k)}\big\|_{L^p}\le C\| [b, T^1]f\|_{L^p}\le C\|\Omega\|_{L({\log^+L})(\mathbf S^{n-1})}\|\nabla b\|_{L^\infty}\|f\|_{L^p}.
\end{align}
 Using $\sum_{j=1}^nR_j^2=-\mathcal{I}$ (identity operator) and $R_j=\partial_j I_1,\, j=1,\dots, n,$ to get \begin{align*}[b,\Phi_k] T^1f=-[b,\Phi_k]\sum_{j=1}^nR_j^2 T^1f=-\sum_{j=1}^n[b,\Phi_k]\partial_j( R_j I_1 T^1f),\end{align*} where $R_j$ is the $j$-th Riesz transform and $I_1$ is the Riesz potential operator of order $1.$
Then by Theorem \ref{Phi1} and $\|R_jf\|_{L^p}\le C\|f\|_{L^p}$ for $1<p<\infty,$ $j=1,\dots,n,$ we  get
\begin{align}\label{phik2}
\big\|\lambda\sqrt{N_\lambda(\{[b,\Phi_k] T^1f\}_k)}\big\|_{L^p}&\le \dsum_{j=1}^n\Big\|\lambda\sqrt{N_\lambda(\{[b,\Phi_k]\partial_j (R_j I_1 T^1f)\}_k)}\Big\|_{L^p}\\&\le \nonumber C\|\nabla b\|_{L^\infty}\dsum_{j=1}^n\| R_j T^1I_1 f\|_{L^p}\\&\le \nonumber C\|\Omega\|_{L({\log^+L})(\mathbf S^{n-1})}\|\nabla b\|_{L^\infty}\| (-\Delta)^{1/2} I_1 f\|_{L^p}\\&= \nonumber C\|\Omega\|_{L({\log^+L})(\mathbf S^{n-1})}\|\nabla b\|_{L^\infty}\|f\|_{L^p},
\end{align}
where in the above inequality, we have used that $\|T^1f\|_{L^p}\le C\|\Omega\|_{L({\log^+L})(\mathbf S^{n-1})}\|(-\Delta)^{1/2}f\|_{L^p}$ for $1<p<\infty$ (see \cite{CFY}) and $(-\Delta)^{1/2} I_1=\mathcal{I}$.
Together \eqref{phik1} with \eqref{phik2}, we get  for $1<p<\infty,$\begin{align*}
\big\|\lambda\sqrt{N_\lambda(\mathscr C^1f)}\big\|_{L^p}&\le\big\|\lambda\sqrt{N_\lambda(\{\Phi_k [b, T^1]f\}_k)}\big\|_{L^p}+\big\|\lambda\sqrt{N_\lambda(\{[b,\Phi_k] T^1f\}_k)}\big\|_{L^p}\\&\le C\|\Omega\|_{L({\log^+L})(\mathbf S^{n-1})}\|\nabla b\|_{L^\infty}\|f\|_{L^p}
\end{align*} uniformly in $\lambda>0.$

\noindent {\bf Estimation of (\ref{lnti}) for $i=2$.}
Let $E_0=\{ x^\prime\in \mathbf S^{n-1}:
|\Omega(x^\prime)|<2\}$
 and $E_{m}=\{ x^\prime\in \mathbf S^{n-1}:
 2^{m}\le|\Omega(x^\prime)|<2^{m+1}\}$
  for positive integer $m$.  For $m\ge 0,$ let
$$\Omega_m(y')=\Omega(y')\chi_{E_m}(y')-\frac{1}
{|\mathbf S^{n-1}|}\int_{E_m}\Omega(x' ) \,d\sigma(x').$$ Since $\Omega$
satisfies \eqref{can of O}, then
$$\int_{\mathbf S^{n-1}}\Omega_m(y')\,d\sigma(y')=0 \
 \ \hbox{for}\ \  m\ge 0$$
and $\Omega(y')=\sum_{m\ge 0}\Omega_m(y').$ Set
   $\nu_{j,m}(x)=\frac{\Omega_m(x)}
   {|x|^{n+1}}\chi_{\{2^j\le |x|
   < 2^{j+1}\}}(x),$ then $\nu_{j}(x)=\sum_{m\ge 0}\nu_{j,m}(x).$
Thus, by the  fact $\ell^2$ embeds into $\ell^{2,\infty}$ and the Minkowski inequality, we get
\begin{align}\label{C2}
\lambda\sqrt{N_\lambda(\mathscr C^2f)(x)}&\le \dsum_{s\ge0}\Big(\dsum_{k\in \Bbb Z}\Big|[b,(\delta_0-\phi_k)\ast\nu_{k+s}]
f(x)\Big|^2\Big)^{1/2}\\
&\le \nonumber\dsum_{s\ge0}\dsum_{m\ge 0}\Big(\dsum_{k\in \Bbb Z}\Big|[b,(\delta_0-\phi_k)\ast\nu_{k+s,m}]
f(x)\Big|^2\Big)^{1/2}.
\end{align} Denote by $F_{s,k,m}f(x):=(\delta_0-\phi_k)\ast\nu_{k+s,m}\ast f(x).$  Let $\varphi\in
C_0^\infty({\Bbb R}^n)$ be a radial function such
  that
 $0\le \varphi\le 1,$ $\hbox{supp}\ \varphi \subset\{1/2\le |\xi|\le
2\}$ and $\sum_{l\in \Bbb Z}\varphi^2(2^{-l}\xi)=1$ for $|\xi|\neq 0.$
Define the multiplier $\Delta_l$ by
$ \widehat{\Delta_l f}(\xi)=\varphi(2^{-l}\xi)\widehat{f}(\xi).$
It is clear that
\begin{align*}[b, (\delta_0-\phi_k)\ast\nu_{k+s,m} ]f(x)=[b, F_{s,k,m} ]f(x)=\sum_{l\in \Bbb Z}[b,F_{s,k,m}\Delta_{l-k}^2]f(x).\end{align*}
Then by the Minkowski inequality, we get for $1<p<\infty,$
\begin{align}\label{C3}
\Big\|\Big(\dsum_{k\in \Bbb Z}\Big|[b,(\delta_0-\phi_k)\ast\nu_{k+s,m}]
f\Big|^2\Big)^{1/2}\Big\|_{L^p}
&\le\dsum_{l\in \Bbb Z}\bigg\|
 \bigg(\dsum_{k\in \Bbb Z}
 |[b, F_{s,k,m}\Delta_{l-k}^2 ]f|^2\bigg)^{1/2}\bigg\|_{L^p}\\
 &:\nonumber=\dsum_{l\in \Bbb Z}\|G_{s,m;b}^{l}f\|_{L^p}.
\end{align}
If we can prove the following two inequalities: for some $0<\beta<1$ and $0<\theta<1,$\begin{align}\label{2bound2}
 \|G_{s,m;b}^lf\|_{L^2}\le C2^{- \beta s}2^{-\theta|l|}\|\Omega_m\|_{L^\infty(\mathbf S^{n-1})}\|\nabla b\|_{L^\infty}\|f\|_{L^2}
\end{align} and \begin{align}\label{Gp}\|G_{ s,m; b}^lf\|_{L^p}&\le
 C\|\Omega_m\|_{L^1(\mathbf S^{n-1})}\|\nabla b\|_{L^\infty}\|f\|_{L^p}\,\,\,\,\,\hbox{for}\,\,\,\,\,\,1<p<\infty,
\end{align} we may get (\ref{lnti}) for $i=2$.
In fact, interpolating between \eqref{2bound2} and \eqref{Gp},  we get for $0<\theta_0, \beta_0<1$,
\begin{align}\label{Gps}
\|G_{ s,m; b}^lf\|_{L^p}\le C 2^{-\beta_0 s}2^{- \theta_0|l|}
\|\nabla b\|_{L^\infty}\|\Omega_m\|_{L^\infty(\mathbf S^{n-1})}\|f\|_{L^p},\,\,\,1<p<\infty.
\end{align}
Taking a large positive integer $N$, such that
$N>\max\{2\theta_0^{-1}, 2\beta_0^{-1}\}$,  we have for $1<p<\infty,$
\begin{align*}
\|\lambda\sqrt{N_\lambda(\mathscr C^{2}f)}\|_{L^p}&\le\dsum_{m\ge 0 }\dsum_{0\le s<Nm}\dsum_{|l|<Nm}\|G_{ s,m; b}^lf\|_{L^p}+\dsum_{m\ge 0 }\dsum_{0\le s<Nm}\dsum_{|l|\ge Nm}\|G_{ s,m; b}^lf\|_{L^p}\\&+\dsum_{m\ge 0 }\dsum_{s>Nm}\dsum_{|l|\ge 0}\|G_{ s,m; b}^lf\|_{L^p}.
\end{align*}
By \eqref{Gp}, we get for $1<p<\infty,$
\begin{align*}
\dsum_{m\ge 0 }\dsum_{0\le s<Nm}\dsum_{|l|<Nm}\|G_{ s,m; b}^lf\|_{L^p}&\le
C\|\nabla b\|_{L^\infty}\dsum_{m\ge  0}\dsum_{0\le s<Nm}\dsum_{0\le|l|< Nm}2^m
\sigma(E_{m}) \|f\|_{L^p}\\
&\le C\|\nabla b\|_{L^\infty}\dsum_{m\ge
0}m^22^m \sigma(E_{m}) \|f\|_{L^p}\\
&\le C\|\Omega\|_{L(\log^+\!\!L)^2(\mathbf S^{n-1})}\|\nabla b\|_{L^\infty}\|f\|_{L^p}.
\end{align*}
Applying \eqref{Gps}, we get for $1<p<\infty,$
\begin{align*}
\dsum_{m\ge 0 }\dsum_{0\le s<Nm}\dsum_{|l|\ge Nm}\|G_{ s,m; b}^lf\|_{L^p} &\leq C\|\nabla b\|_{L^\infty}\dsum_{m\ge  0 }2^m\dsum_{0\le s<Nm}2^{-\beta_0 s}\dsum_{|l|>Nm}2^{-\theta_0|l|}\|f\|_{L^p}\\&\le C\|\nabla b\|_{L^\infty}\dsum_{m\ge  0 }m2^{(1-\theta_0N)m}\|f\|_{L^p}\\ &\leq C\|\nabla b\|_{L^\infty}\|f\|_{L^p}.
\end{align*}
Applying \eqref{Gps} again, we get for $1<p<\infty,$
\begin{align*}
\dsum_{m\ge 0 }\dsum_{s>Nm}\dsum_{|l|\ge 0}\|G_{ s,m; b}^lf\|_{L^p}&\leq \dsum_{m\ge 0 }\dsum_{s>Nm}\dsum_{0\le |l|<Nm}\|G_{ s,m; b}^lf\|_{L^p}+\dsum_{m\ge 0 }\dsum_{s>Nm}\dsum_{|l|\ge Nm}\|G_{ s,m; b}^lf\|_{L^p}\\&\le C\|\nabla b\|_{L^\infty}\dsum_{m\ge  0 }2^m\dsum_{s>Nm}2^{-\beta_0s}\bigg(\dsum_{|l|<Nm}+\dsum_{|l|\ge Nm}2^{-\theta_0|l|}\bigg)\|f\|_{L^p}\\&\le C\|\nabla b\|_{L^\infty}\dsum_{m\ge  0 }(m2^{(1-\beta_0N)m}+2^{(1-\beta_0N-\theta_0N)m})\|f\|_{L^p}\\ &\leq C\|\nabla b\|_{L^\infty}\|f\|_{L^p}.
\end{align*}
Combining above three estimates, we have for $1<p<\infty$
\begin{align*}\|\lambda\sqrt{N_\lambda( \mathscr C^2f)}\|_{L^p}\le
C(1+\|\Omega\|_{L(\log^+\!\!L)^2(\mathbf S^{n-1})})\|\nabla b\|_{L^\infty}\|f\|_{L^p}.
\end{align*}
We therefore finish the  estimate of (\ref{lnti}) for $i=2$.

Now we are going to give the proof of  \eqref{2bound2} and \eqref{Gp}.
We first prove a rapid decay estimate of $\|G_{s,m;b}^{l}f\|_{L^2}$ for $l\in \Bbb Z$ and $s\in \Bbb N$.  Set $F_{s,k,m}^lf(x):=F_{s,k,m}\Delta_{l-k}f(x).$ Write
$$
[b, F_{s,k,m}\Delta_{l-k}^2 ]f=
 F_{s,k,m}^l [b, \Delta_{l-k}]f+[b, F_{s,k,m}^l ]\Delta_{l-k}f.
$$
Therefore
\begin{align*}
\|G_{s,m;b}^{l}f\|_{L^2}&\le
 \bigg\|
 \bigg(\dsum_{k\in \Bbb Z}
 |F_{s,k,m}^l [b, \Delta_{l-k}]f|^2\bigg)^{1/2}\bigg\|_{L^2}+
 \bigg\|
 \bigg(\dsum_{k\in \Bbb Z}
 |[b,F_{s,k,m}^l]\Delta_{l-k}f|^2\bigg)^{1/2}\bigg\|_{L^2}\\
 &:= I+II.
\end{align*}
 Set   $$M_{s,k,m}(\xi)=(1-\widehat{\phi_k}(\xi))\widehat{\nu_{k+s,m}}(\xi),\,\,\,\,\,M_{s,k,m}^l(\xi)=M_{s,k,m}(\xi)\varphi(2^{k-l}\xi).$$ Then write $F_{s,k,m}$  and $F_{s,k,m}^l$, respectively by $$\widehat{F_{s,k,m}f}(\xi)=M_{s,k,m}(\xi)\widehat{f}(\xi) \,\,\hbox{and}\,\, \widehat{F_{s,k,m}^lf}(\xi)=M_{s,k,m}^l(\xi)\widehat{f}(\xi).$$
 Since  $supp\ (1-\widehat{\phi_k})\widehat{\nu_{k+s,m}} \subset \{\xi :|2^k\xi|>1/2\}$,
by a well-known Fourier transform estimate of Duoandikoetxea and Rubio de Francia
 (See \cite{DR86}, p.551-552), it is easy to show that there exists some  $\nu\in(0,1)$ such that
\begin{align}\label{MF}|M_{s,k,m}(\xi)|\le C 2^{-k} 2^{ -(\nu+1) s}\min\{|2^k \xi|^2, |2^k \xi|^{-\nu}\}\|\Omega_m\|_{L^\infty(\mathbf S^{n-1})},\,\,\,s\ge 0.\end{align}
From this and the Plancherel theorem imply  the following estimate
\begin{align}\label{2bound1}
\|F_{s,k,m}^lf\|_{L^2}\le C2^{ -k} 2^{-(\nu+1) s} \min\{  2^{2l}, 2^{-\nu l}\}\|\Omega_m\|_{L^\infty(\mathbf S^{n-1})}\|f\|_{L^2},\ \ \ \ \ \text{for} \ l\in \Bbb Z.
\end{align}
Then apply \eqref{2bound1} and Lemma \ref{Deltacom}, we have
\begin{align*}
I&\le C 2^{-(\nu+1) s}\min\{ 2^{-(\nu+1) l}, 2^{l}\}\|\Omega_m\|_{L^\infty(\mathbf S^{n-1})} \bigg\|
 \bigg(\dsum_{k\in \Bbb Z}
 |2^{l -k}[b,\Delta_{l-k}]f|^2\bigg)^{1/2}\bigg\|_{L^2}\\
 &\le C2^{-(\nu+1) s}\min\{ 2^{-(\nu+1) l}, 2^{l}\}\|\Omega_m\|_{L^\infty(\mathbf S^{n-1})} \|\nabla b\|_{L^\infty}\|f\|_{L^2}.
\end{align*}

To proceed with the estimate of $II$, we define multiplier $\widetilde{F}_{s,k,m}^l$ by $\widehat{\widetilde{F}_{s,k,m}^l f}(\xi)=M_{s,k,m}^l(2^{-k}\xi)\widehat{f}(\xi).$ As a result of \eqref{MF}, we have the following estimate
\begin{align}\label{m}
|M_{s,k,m}^l(2^{-k}\xi)|\le C2^{-k} 2^{ -(\nu+1) s} \min\{2^{2l}, 2^{-\nu l}\}\|\Omega_m\|_{L^\infty(\mathbf S^{n-1})}.
\end{align}
On the other hand, by the trivial computation,  we have for any fixed multi-index $\eta$,\begin{align}\label{Fourier}
|\partial^\eta\widehat{\nu_{k+s,m}}(\xi)|&\le C2^{(k+s)(|\eta|-1)}\|\Omega_m\|_{L^1(\mathbf S^{n-1})}.
\end{align} Then we have for  any fixed multi-index $\alpha$ with $|\alpha|=2,$\begin{align}\label{Fourier1}|\partial^\alpha (M_{s,k,m}^l(2^{-k}\xi))|
&=|\partial^\alpha\big(\widehat{\nu_{k+s,m}}(2^{-k}\xi)(1-\phi(\xi))\varphi(2^{-l}\xi)\big)|\\&\nonumber=|\dsum_{\eta}C_{\eta_1}^{\alpha_1}\dots
C_{\eta_n}^{\alpha_n}\partial^\eta(\widehat{\nu_{k+s,m}}(2^{-k}\xi) )\partial^{\alpha-\eta}[(1-\phi(\xi))\varphi(2^{-l}\xi)]|\\&\le \nonumber C2^{-k}2^s\|\Omega_m\|_{L^1(\mathbf S^{n-1})},\end{align} where the sum is taken over all multi-indices $\eta$ with $0\le \eta_j\le \alpha_j$ for $1\le j\le n.$
Via Lemma \ref{CD1}  to \eqref{m} and \eqref{Fourier1} with $\delta=2^l$ and $j=k$ says that there exist  constants $\vartheta\in (0,1)$ and $\gamma\in (0,1)$ such that
$$ \|[b,\widetilde{F}_{s,k,m}^l] f\|_{L^2} \le C 2^{-k}2^{-\vartheta s}2^{-\gamma|l|}\|\Omega_m\|_{L^\infty(\mathbf S^{n-1})}\|b\|_{Lip}\|f\|_{L^2},\ \ \ \ \ for \ \ \ l\in \Bbb Z \ \ and\ \ s\ge 0.$$
Further, by $\|b(2^k\cdot)\|_{Lip}=2^k\|b\|_{Lip},$ we have
\begin{align}\label{2bound}\|[b,{F}_{s,k,m}^l] f\|_{L^2}\le C2^{-\vartheta s}2^{-\gamma|l|}\|\Omega_m\|_{L^\infty(\mathbf S^{n-1})}\|b\|_{Lip}\|f\|_{L^2},\ \ \ \ \ for \ \ \ l\in \Bbb Z \ \ and\ \ s\ge 0.\end{align}
 Then by \eqref{2bound} and  Littlewood-Paley theory, we get
\begin{align*}
II&\le C2^{-\vartheta s}2^{-\gamma |l|}\|\Omega_m\|_{L^\infty(\mathbf S^{n-1})}\|\nabla b\|_{L^\infty} \bigg\|
 \bigg(\dsum_{k\in \Bbb Z}
 |\Delta_{l-k}f|^2\bigg)^{1/2}\bigg\|_{L^2}\\
 &\le C2^{-\vartheta s}2^{-\gamma |l|}\|\Omega_m\|_{L^\infty(\mathbf S^{n-1})}\|\nabla b\|_{L^\infty}\|f\|_{L^2}.
\end{align*}
Combining the estimates of  $I$  with  $II$, we establish the proof of  \eqref{2bound2}.

\par
Now we give the $L^p(1<p<\infty)$ estimate of $G_{s,m;b}^lf$ for $l\in \Bbb Z$ and $s\ge 0$.
We write
\begin{align*}
[b, F_{s,k,m}\Delta_{l-k}^2 ]f(x)=
[b, F_{s,k,m} ]\Delta_{l-k}^2f+ F_{s,k,m} [b, \Delta_{l-k}^2]f.
\end{align*}
 By the Minkowski inequality, we get  for $1<p<\infty$
$$\begin{array}{ll}\|G_{ s,m; b}^lf\|_{L^p}&\le
 \bigg\|
 \bigg(\dsum_{k\in \Bbb Z}
 |[b,F_{s,k,m}]\Delta_{l-k}^2f|^2\bigg)^{1/2}\bigg\|_{L^p}\\&\quad+
 \bigg\|
 \bigg(\dsum_{k\in \Bbb Z}
 |F_{s,k,m} [b, \Delta_{l-k}^2]f|^2\bigg)^{1/2}\bigg\|_{L^p}.
\end{array}$$We estimate each term separately. Firstly, we estimate $\|
 (\sum_{k\in \Bbb Z}
 |[b,F_{s,k,m}]\Delta_{l-k}^2f|^2)^{1/2}\|_{L^p}$ for $1<p<\infty.$ Recall that $F_{s,k,m}f=(\delta_0-\phi_k)\ast\nu_{k+s,m}\ast f$ and $\Phi_kf=\phi_k\ast f.$
 Denote by $T_{j,m}f=\nu_{j,m}\ast f$ for $j\in \Bbb Z$  and $m\ge 0.$  Write
\begin{align*}[b,F_{s, k, m}] f=[b,T_{k+s, m}]\Phi_{k} f+{
T}_{k+s,m}[b,\Phi_{k}] f-[b, T_{k+s, m}]f.
\end{align*}
It is well known that for any $f\in L^p(\mathbb R^n),$
\begin{align*}|[b,T_{k+s,m}]f(x)|
&\le
C\|b\|_{Lip}M_{\Omega_m}f(x).\end{align*}
From this and $M_{\Omega_m}$ is bounded on $L^p(\ell^2)({\Bbb R}^n)$ for $1<p<\infty$ with bounds $\|\Omega_m\|_{L^1(\mathbf S^{n-1})}$ (see Lemma 2.3 in \cite{CD})  we  get for $ 1<p<\infty$,\begin{align}\label{Tsk1}\bigg\|\bigg(\dsum_{k\in \mathbb Z}|[b,T_{k+s,m}]f_k|^2\bigg)^{1/2}\bigg\|_{L^p}&\le
C\|\Omega_m\|_{L^1(\mathbf S^{n-1})}\|\nabla b\|_{L^\infty}\bigg\|\bigg(\dsum_{k\in \mathbb Z}|f_k|^2\bigg)^{1/2}\bigg\|_{L^p}.\end{align}
By Lemma \ref{Tsk} and Lemma \ref{phicom},  we get for $ 1<p<\infty,$ \begin{align}\label{Tsk3}\bigg\|\bigg(\dsum_{k\in \mathbb Z}|{
T}_{k+s,m}[b,\Phi_{k}]f_k|^2\bigg)^{1/2}\bigg\|_{L^p}&\le
C\|\Omega_m\|_{L^1(\mathbf S^{n-1})}\bigg\|\bigg(\dsum_{k\in \mathbb Z}|\nabla[b,\Phi_{k}]f_k|^2\bigg)^{1/2}\bigg\|_{L^p}\\&\le\nonumber
C\|\nabla b\|_{L^\infty}\|\Omega_m\|_{L^1(\mathbf S^{n-1})}\bigg\|\bigg(\dsum_{k\in \mathbb Z}|f_k|^2\bigg)^{1/2}\bigg\|_{L^p}.\end{align}
Together \eqref{Tsk1}-\eqref{Tsk3} with  the $L^p(\ell^2)({\Bbb R}^n)\,(1<p<\infty)$ boundedness of $\{\Phi_{k}\}$ and Littlewood-Paley theory, we get for $1<p<\infty,$
 \begin{align}\label{Lp}
  \bigg\|
 \bigg(\dsum_{k\in \Bbb Z}
 |[b,F_{s,k,m}]\Delta_{l-k}^2f|^2\bigg)^{1/2}\bigg\|_{L^p}&\le
 C\|{\Omega_m}\|_{L^1(\mathbf S^{n-1})}\|\nabla b\|_{L^\infty} \bigg\|
 \bigg(\dsum_{k\in \Bbb Z}
 |\Delta_{l-k}^2f|^2\bigg)^{1/2}\bigg\|_{L^p}\\&\le\nonumber
 C\|{\Omega_m}\|_{L^1(\mathbf S^{n-1})}\|\nabla b\|_{L^\infty} \|f
\|_{L^p}.
\end{align}
Secondly, we estimate $\|
(\sum_{k\in \Bbb Z}
 |F_{s,k,m} [b, \Delta_{l-k}^2]f|^2)^{1/2}\|_{L^p}$ for $1<p<\infty.$
If the following inequality holds   \begin{align}\label{Lp2}\bigg\|
 \bigg(\dsum_{k\in \Bbb Z}
 | \nabla[b, \Delta_{l-k}^2]f|^2\bigg)^{1/2}\bigg\|_{L^p}
&\le
 C\|\nabla b\|_{L^\infty}\|f\|_{L^p},\,\,\,\,\,1<p<\infty.
\end{align}
Then apply $L^p(\ell^2)({\Bbb R}^n)\,(1<p<\infty)$ boundedness of $\{\Phi_{k}\}$ and Lemma \ref{Tsk}, we can get
\begin{align}\label{Lpf}
 \bigg\|
 \bigg(\dsum_{k\in \Bbb Z}
 |F_{s,k,m}[b, \Delta_{l-k}^2] f|^2\bigg)^{1/2}\bigg\|_{L^p}&\le
 C\|\Omega_m\|_{L^1(\mathbf S^{n-1})}\bigg\|
 \bigg(\dsum_{k\in \Bbb Z}
 |\nabla[b, \Delta_{l-k}^2]  f_k|^2\bigg)^{1/2}\bigg\|_{L^p}\\&\le\nonumber
 C\|\Omega_m\|_{L^1(\mathbf S^{n-1})}\|\nabla b\|_{L^\infty}\|f\|_{L^p}.
\end{align}
Combining the estimates of \eqref{Lp} and \eqref{Lpf}, we get for $1<p<\infty,$
\begin{align}\nonumber\|G_{ s,m; b}^lf\|_{L^p}&\le
 C\|\Omega_m\|_{L^1(\mathbf S^{n-1})}\|\nabla b\|_{L^\infty}\|f\|_{L^p}.
\end{align}This gives \eqref{Gp}.
Now we prove \eqref{Lp2}. Since  $\nabla\Delta_{l-k}^2f(x)=2^{l-k}\widetilde{\Delta}_{l-k}(x)$ for  a.e. $x\in {\Bbb R}^n$, where $\widetilde{{\Delta}}_j$ is the Littlewood-paley operator given on the
transform by multiplication with the function
$(2^{-j}\xi)\varphi^2(2^{-j}\xi)$ for $j\in \Bbb Z$. Then by $\nabla[b, \Delta_{l-k}^2]f=[b, \nabla\Delta_{l-k}^2]f-[b, \nabla]\Delta_{l-k}^2f$ and the Minkowski inequality, we get for $1<p<\infty,$
\begin{align*}\nonumber\bigg\|
 \bigg(\dsum_{k\in \Bbb Z}
 | \nabla[b, \Delta_{l-k}^2]f|^2\bigg)^{1/2}\bigg\|_{L^p}&\le
 \bigg\|
 \bigg(\dsum_{k\in \Bbb Z}
 | [b, 2^{l-k}\widetilde{\Delta}_{l-k}]f|^2\bigg)^{1/2}\bigg\|_{L^p}+ \bigg\|
 \bigg(\dsum_{k\in \Bbb Z}
 | [b, \nabla]\Delta_{l-k}^2f|^2\bigg)^{1/2}\bigg\|_{L^p}.
\end{align*}
 By Lemma \ref{Deltacom}, $[b, \nabla]f=-(\nabla b)f$ and Littlewood-Paley theory, we get for $1<p<\infty,$
 \begin{align*}\nonumber\bigg\|
 \bigg(\dsum_{k\in \Bbb Z}
 | \nabla[b, \Delta_{l-k}^2]f|^2\bigg)^{1/2}\bigg\|_{L^p}
&\le
 C\|\nabla b\|_{L^\infty}\|f\|_{L^p}.
\end{align*}
This gives \eqref{Lp2}.

\noindent {\bf Estimation of (\ref{lnti}) for $i=3$.}
 We have the following pointwise estimate
\begin{align}\label{H}
\lambda\sqrt{N_\lambda(\mathscr C^3f)(x)}
&\le\dsum_{s<0}\big(\dsum_{k\in \Bbb Z}\big|[b, \phi_k\ast\nu_{k+s}]
f(x)\big|^2\big)^{1/2}.
\end{align}
The proofs are essentially similar to  the proof of  (\ref{lnti}) for $i=2$. More precisely,  we need to give the estimates on  the left hand side of (\ref{2bound2})-(\ref{Gp}) with replacing $(\delta_0-\phi_k)\ast\nu_{k+s}$ by $\phi_k\ast\nu_{k+s}$. Since  $supp\ \widehat{\phi_k}\widehat{\nu_{k+s}} \subset \{\xi :|2^k\xi|<1\}$ and $\Omega$ satisfies \eqref{mean zero}, then it is easy to see that
$$|\widehat{\phi_k\nu_{k+s}} (\xi)|\le C2^{-k} 2^{s}\|\Omega\|_{L^1(\mathbf S^{n-1})} \min\{|2^k \xi|^2, |2^k \xi|^{-1}\}$$  and for any fixed multi-index $\eta$ with $|\eta|\le 2,$ \begin{align*}
|\partial^\eta\widehat{\nu_{k+s}} (\xi)|&\le C2^{(k+s)(|\eta|-1)}\|\Omega\|_{L^1(\mathbf S^{n-1})}|2^{k+s}\xi|^{2-|\eta|}\le C2^{k(|\eta|-1)}2^s\|\Omega\|_{L^1(\mathbf S^{n-1})}|2^{k}\xi|^{2-|\eta|}.
\end{align*}
 Set   $$R_{s,k}(\xi)=\widehat{\phi_k}(\xi)\widehat{\nu_{k+s}}(\xi),\,\,\,\,\,R_{s,k}^l(\xi)=R_{s,k}(\xi)\varphi(2^{k-l}\xi).$$
Using the two above inequalities, we have the following estimate
\begin{align}\label{m2}
|R_{s,k}^l(2^{-k}\xi)|\le C2^{-k} 2^{ s} \min\{2^{2l}, 2^{- l}\}\|\Omega\|_{L^1(\mathbf S^{n-1})}.
\end{align} and
 for  any fixed multi-index $\alpha$ with $|\alpha|=2,$\begin{align}\label{Fourier2}|\partial^\alpha (R_{s,k}^l(2^{-k}\xi))|&\le  C2^{-k}2^s\|\Omega\|_{L^1(\mathbf S^{n-1})}.\end{align}
Then apply Lemma \ref{CD1} and the same arguments of the proofs of  (\ref{lnti}) for $i=2$, then  the right hand side of (\ref{2bound2}) is controlled by $ C2^{  s}2^{-\theta |l|}\|\Omega\|_{L^1(\mathbf S^{n-1})}\|\nabla b\|_{L^\infty}\|f\|_{L^2}$ for some  $\theta>0.$
It is also  easy to get the same estimates in the right hand side of (\ref{Gp}) by using \eqref{Tsk1}-\eqref{Tsk3} and Lemma  \ref{Tsk}.
Then we get for $1<p<\infty$
$$\|\lambda\sqrt{N_\lambda(\mathscr C^{3}f)}\|_{L^p}\le
C\|\Omega\|_{L^1(\mathbf S^{n-1})}\|\nabla b\|_{L^\infty}\|f\|_{L^p}.
$$

\qed

\section{Proof of Theorem \ref{thm:H} (II) }

We first prove \eqref{short variation singular} by a key lemma, its proof will be postponed until the end of the section.

\begin{lemma}\label{S2k} For $t\in[1,2)$ and $j\in\mathbb{Z}$, we define $\nu_{j,t}$ as
$$\nu_{j,t}(x)=\frac{\Omega(x')}{|x|^{n+1}}\chi_{\{2^jt\leq|x|\leq2^{j+1}\}}(x).$$
Denote $T_{j,t}$ by $T_{j,t}f(x)=\nu_{j,t}\ast f(x)$.  For $k\in\mathbb Z$, denote by $$S_{2,k}(\mathscr Cf)(x)=\Big(\sum_{j\in\mathbb{Z}}\|\{[b,T_{j,t}\Delta_{k-j}^2]
f(x)\}_{t\in[1,2)}\|_{V_2}^2\Big)^{\frac{1}{2}}.$$ For $b\in Lip({\Bbb R}^n),$ then the
following conclusions hold:\\

{\rm (i)}\quad  There exists  a constant $
\theta_1\in(0,1)$ such that
\begin{align}\label{S0}
\|S_{2,k}(\mathscr Cf) \|_{L^2}\le
C2^{-\theta_1 k}\|\Omega\|_{L^\infty(\mathbf S^{n-1})}\|
\nabla b\|_{L^\infty}\|f\|_{L^2};
\end{align}

{\rm (ii)}\quad   If $\Omega(x')$ satisfies \eqref{mean zero}, there exists  a constant $
\theta_2\in(0,1)$ such that
\begin{align}\label{Ss1}
\|S_{2,k}(\mathscr Cf) \|_{L^2}\le
C2^{\theta_2 k}\|\Omega\|_{L^1(\mathbf S^{n-1})}\|
\nabla b\|_{L^\infty}\|f\|_{L^2};\end{align}

{\rm (iii)}\quad For $1<p<\infty,$
\begin{align}\label{S2}
\|S_{2,k}(\mathscr Cf) \|_{L^p}\le C\|\Omega\|_{L^1(\mathbf S^{n-1})} \|
\nabla b\|_{L^\infty}\|f\|_{{L}^p}.
\end{align}
The constants $C's$ in \eqref{S0},\,\eqref{Ss1} and  \eqref{S2} are
independent of $k$.
\end{lemma}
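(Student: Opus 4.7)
The strategy is to establish all three assertions within a single framework: reduce the $V_2$-variation in $t\in[1,2)$ to an integrated time-derivative, split the resulting commutator by a Leibniz identity at the Littlewood--Paley scale, and then invoke Fourier analysis together with the auxiliary lemmas of Section~5. For the first step, for each fixed $x$ and $j$ the map $t\mapsto[b,T_{j,t}\Delta_{k-j}^2]f(x)$ is absolutely continuous on $[1,2)$, and a standard Cauchy--Schwarz argument gives $\|a\|_{V_2}^2\le\int_1^2|a'(t)|^2\,dt$. Writing $A_{j,t}:=\partial_tT_{j,t}$, which is the weighted spherical average $A_{j,t}g(x)=-t^{-2}2^{-j}\int_{\mathbf S^{n-1}}\Omega(u)g(x-2^jtu)\,d\sigma(u)$, one obtains
$$S_{2,k}(\mathscr Cf)(x)^2\le\sum_{j\in\mathbb Z}\int_1^2\big|[b,A_{j,t}\Delta_{k-j}^2]f(x)\big|^2\,dt.$$
I then split the integrand using the identity $[b,A_{j,t}\Delta_{k-j}^2]=[b,A_{j,t}\Delta_{k-j}]\Delta_{k-j}+A_{j,t}\Delta_{k-j}[b,\Delta_{k-j}]$.

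\textbf{Parts (i) and (ii).} By Plancherel the problem reduces, for each $j$ and $t$, to sharp bounds on $\widehat{A_{j,t}}(\xi)$ on the frequency shell $|\xi|\sim 2^{j-k}$, where $|2^jt\xi|\sim 2^k t$. For (i) the standard spherical Fourier decay gives $|\widehat{A_{j,t}}(\xi)|\lesssim 2^{-j}\|\Omega\|_{L^\infty(\mathbf S^{n-1})}(2^kt)^{-\nu}$ for some $\nu>0$; for (ii) the two vanishing moments in \eqref{mean zero} permit a second-order Taylor expansion of $e^{-i2^jt\xi\cdot u}$ on $\mathbf S^{n-1}$, yielding $|\widehat{A_{j,t}}(\xi)|\lesssim 2^{-j}\|\Omega\|_{L^1(\mathbf S^{n-1})}(2^kt)^2$. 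The first summand of the Leibniz split is handled by Lemma \ref{CD1} (applied after rescaling by $2^j$, with $\delta=2^k$) to the multiplier $\widehat{A_{j,t}}(\xi)\varphi(2^{j-k}\xi)$, yielding the commutator bound $\|[b,A_{j,t}\Delta_{k-j}]g\|_{L^2}\lesssim 2^{-\theta k}\|\nabla b\|_{L^\infty}\|g\|_{L^2}$ for appropriate $\theta>0$ (and the analogue as $k\to-\infty$ using the mean-zero bound). The second summand is controlled by pairing the operator-norm estimate just derived for $A_{j,t}\Delta_{k-j}$ with Lemma \ref{Deltacom} applied to $2^{k-j}[b,\Delta_{k-j}]f$. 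Summation in $j$ uses the finite overlap of the Fourier supports of $\{\Delta_{k-j}\}_{j\in\mathbb Z}$, and the $t$-integration on $[1,2)$ is harmless, producing the factors $2^{-\theta_1 k}$ and $2^{\theta_2 k}$ respectively.

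\textbf{Part (iii).} By Minkowski and Fubini it suffices to prove, uniformly in $t\in[1,2)$,
$$\Big\|\Big(\sum_{j\in\mathbb Z}\big|[b,A_{j,t}\Delta_{k-j}^2]f\big|^2\Big)^{1/2}\Big\|_{L^p}\le C\|\Omega\|_{L^1(\mathbf S^{n-1})}\|\nabla b\|_{L^\infty}\|f\|_{L^p}.$$
With the same Leibniz decomposition, Lemma \ref{Tsk} (whose proof dominates the annular operator by a rotational maximal operator and applies verbatim to the spherical $A_{j,t}$) together with Lemma \ref{Deltacom} controls the first piece, while Lemma \ref{phicom}-type vector-valued commutator bounds coupled with the $L^p(\ell^2)$-boundedness of the family $\{A_{j,t}\Delta_{k-j}\}_j$ handle the second piece, in the same manner as the $L^p$ estimates for $G^l_{s,m;b}$ in Section~6.

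\textbf{The main obstacle} will be verifying, uniformly in $t\in[1,2)$, the hypotheses of Lemma \ref{CD1} for the multiplier $\widehat{A_{j,t}}(\xi)\,\varphi(2^{j-k}\xi)$: one needs both the two-sided $L^\infty$ bound with the $\min$-of-exponents structure and the second-derivative bound $\|\partial^\alpha M\|_{L^\infty}\lesssim 2^{-j}\cdot 2^k$, obtained from Taylor expansions on $\mathbf S^{n-1}$ analogous to \eqref{Fourier1}--\eqref{Fourier2}. Once these symbol estimates are in place, the three claims follow in parallel from the framework above.
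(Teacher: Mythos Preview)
Your reduction $\|a\|_{V_2}^2\le\int_1^2|a'(t)|^2\,dt$ is correct, but it is too lossy for part (i): it keeps only the spherical operator $A_{j,t}=\partial_tT_{j,t}$ and discards the annular operator $T_{j,t}$, and the decay you need for (i) lives precisely in the annular piece.

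Concretely, your plan for (i) rests on the claim that for $\Omega\in L^\infty(\mathbf S^{n-1})$ one has
\[
\Big|\int_{\mathbf S^{n-1}}\Omega(u)\,e^{-i r\omega\cdot u}\,d\sigma(u)\Big|\le C\|\Omega\|_{L^\infty(\mathbf S^{n-1})}\,r^{-\nu}
\]
for some $\nu>0$ uniform in $\Omega$. This is false. For instance, in $\mathbb R^3$ take $\Omega(u)=e^{ir_0 u_3}$; then $\|\Omega\|_{L^\infty}=1$, but at $\xi=(0,0,r_0)$ the integral equals $\sigma(\mathbf S^2)=4\pi$, so no bound of the form $Cr_0^{-\nu}$ can hold. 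The Duoandikoetxea--Rubio de Francia estimate \eqref{nu1} that the paper uses for $\widehat{\nu_{j,t}}$ comes from integrating in the radial variable over the annulus $[2^jt,2^{j+1}]$, which produces the oscillatory decay; the spherical multiplier $\widehat{A_{j,t}}$ has no radial integration and hence no uniform decay under a bare $L^\infty$ assumption on $\Omega$. Consequently the hypotheses of Lemma \ref{CD1} cannot be verified for $A_{j,t}\Delta_{k-j}$ in the regime $k\ge0$, and your argument for (i) does not close.

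The paper avoids this by using the sharper inequality $\|a\|_{V_2}\le\|a\|_{L^2}^{1/2}\|a'\|_{L^2}^{1/2}$ and Cauchy--Schwarz, which yields $\|S_{2,k}\|_{L^2}^2\le\|I_{1,k}f\|_{L^2}\|I_{2,k}f\|_{L^2}$ with $I_{1,k}$ built from the annular $T_{j,t}$ and $I_{2,k}$ from $\partial_tT_{j,t}=A_{j,t}$. The $k$--decay comes entirely from $I_{1,k}$ (via \eqref{nu1}, \eqref{nu3} and Lemma \ref{CD1}); for $I_{2,k}$ only the uniform bound $\le C\|\Omega\|_{L^1}\|\nabla b\|_{L^\infty}\|f\|_{L^2}$ is established, and indeed nothing better is available.

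Your approach to (iii) has a related difficulty. You assert that Lemma \ref{Tsk} ``applies verbatim to the spherical $A_{j,t}$'', but the proof of Lemma \ref{Tsk} crucially uses the radial integration $\int_{2^{k-1}}^{2^k}dt/t$ to convert the spherical average into an annular one, which is then dominated pointwise by $M_\Omega$. Without that step you are left with $j$-dependent spherical translations $g_j(\cdot-2^jtu)$, for which the $L^p(\ell^2)$ bound fails when $p\neq2$. The paper handles (iii) by an entirely different route: it writes the $V_2$-norm as a supremum over partitions, so the differences $T_{j,t_l}-T_{j,t_{l+1}}=T_{j,t_l,t_{l+1}}$ are again annular operators (integrals over $2^jt_l<|y|\le 2^jt_{l+1}$), and positivity plus disjointness of the annuli collapses the $\ell^2$ sum over $l$ to a single annular integral bounded pointwise by $M_\Omega$.

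Your scheme does work for part (ii), since the two vanishing moments \eqref{mean zero} give $|\widehat{A_{j,t}}(\xi)|\lesssim 2^{-j}\|\Omega\|_{L^1}|2^jt\xi|^2$ by Taylor expansion, and the second-derivative hypothesis of Lemma \ref{CD1} can be checked directly; but (i) and (iii) require the annular structure that your derivative-only reduction throws away.
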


Lemma \ref{S2k} will be proved at the end of this section. Let us
now finish the proof of \eqref{short variation singular} by using Lemma \ref{S2k}. For $t\in[1,2)$, let $\nu_{j,t}$ and $T_{j,t}$
be defined in the same way as  in Lemma \ref{S2k}.
 Observe that
$V_{2,j}(\mathscr Cf)(x)$ is just the strong  $2$-variation function of the family
$\{[b,T_{j,t}] f(x)\}_{t\in[1,2)}$, hence using $\sum_{l\in \Bbb Z}\Delta_l^2=\mathcal{I}$, we get
\begin{align*}
S_{2}(\mathscr Cf)(x)&=\Big(\sum_{j\in\mathbb{Z}}|V_{2,j}(\mathscr Cf)(x)|^2\Big)^{\frac{1}{2}}
=\Big(\sum_{j\in\mathbb{Z}}\|\{[b,T_{j,t}] f(x)\}_{t\in[1,2)}\|_{V_2}^2\Big)^{\frac{1}{2}}\\
&\leq\sum_{k\in\mathbb{Z}}\Big(\sum_{j\in\mathbb{Z}}\|\{[b,T_{j,t}\Delta_{k-j}^2]
f(x)\}_{t\in[1,2)}\|_{V_2}^2\Big)^{\frac{1}{2}}\\&:=
\sum_{k\in\mathbb{Z}}S_{2,k}(\mathscr Cf)(x)\\&=
\sum_{k<0}S_{2,k}(\mathscr Cf)(x)+\sum_{k\ge 0}S_{2,k}(\mathscr Cf)(x).
\end{align*}
Interpolating between \eqref{Ss1} and  \eqref{S2}, we can get for some  constant $\theta_3\in (0,1)$ and $1<p<\infty$,
\begin{align*}
 \|S_{2,k}(\mathscr Cf)\|_{L^p}\le C\|\Omega\|_{L^1(\mathbf S^{n-1})}2^{\theta_3k} \|
\nabla b\|_{L^\infty}\|f\|_{{L}^p},\,\,\hbox{for}\,\,\, k<0.
\end{align*}
Then for $1<p<\infty,$\begin{align*}
 \sum_{k<0}\|S_{2,k}(\mathscr Cf)\|_{L^p}&\le C\|\Omega\|_{L^1(\mathbf S^{n-1})}\|
\nabla b\|_{L^\infty}\|f\|_{{L}^p}\sum_{k<0}2^{\theta_3k}\\&\le \nonumber C\|\Omega\|_{L^1(\mathbf S^{n-1})}\|
\nabla b\|_{L^\infty}\|f\|_{{L}^p}.
\end{align*}  Decompose $\Omega(y')=\sum_{m\ge 0}\Omega_m(y')$ as in Section 3.
For $m\ge 0$, set $$\nu_{j,t,m}(x)=\frac{\Omega_m(x)}{|x|^{n+1}}\chi_{\{2^jt\le |x|< 2^{j+1}\}}(x),$$
$T_{j,t,m}$ is defined as $T_{j,t}$ by replacing $\nu_{j,t}$ by $\nu_{j,t,m}$.
\begin{align}\nonumber
\sum_{k\ge 0}S_{2,k}(\mathscr Cf)(x)&\le\dsum_{m\ge 0 }\dsum_{k\ge 0}\Big(\sum_{j\in\mathbb{Z}}\|\{[b,T_{j,t,m}\Delta_{k-j}^2]
f(x)\}_{t\in[1,2)}\|_{V_2}^2\Big)^{\frac{1}{2}}\\&\nonumber:=\dsum_{m\ge 0 }\dsum_{k\ge 0}
S_{2,k,m}(\mathscr Cf)(x).
\end{align}
Interpolating between \eqref{S0} and  \eqref{S2}, we can get for some constant $\theta_4\in (0,1/2)$ and $1<p<\infty$,
\begin{align}\label{S3}
 \|S_{2,k,m}(\mathscr Cf)\|_{L^p}\le C\|\Omega_m\|_{L^\infty(\mathbf S^{n-1})}2^{-\theta_4k} \|
\nabla b\|_{L^\infty}\|f\|_{{L}^p},\,\,\hbox{for}\,\,\, k\ge 0,\ \ m\ge0.
\end{align}
 Taking a large positive integer $N$, such that
$N>2\theta_4^{-1}.$ Then for $1<p<\infty,$ \begin{align*}\dsum_{k\ge 0}\|S_{2,k}(\mathscr Cf)\|_{L^p}\le\dsum_{m\ge 0 }\dsum_{k>Nm}\|S_{2,k,m}(\mathscr Cf)\|_{L^p}+\sum_{m\ge 0}\dsum_{0\le k\le Nm}\|S_{2,k,m}(\mathscr Cf)\|_{L^p}:=J_1+J_2.\end{align*}
 For $J_{1}$, using \eqref{S3}, we get for $1<p<\infty$,
\begin{align*}
J_{1}
&\leq C\|
\nabla b\|_{L^\infty}\dsum_{m\ge  0 }2^m\dsum_{k>Nm}2^{-\theta_4k}\|f\|_{L^p}\\
&\le C\|
\nabla b\|_{L^\infty}\|f\|_{L^p}.
\end{align*}
By \eqref{S2}, we get for $1< p<\infty$,
\begin{align*}J_{2}&\le
C\|
\nabla b\|_{L^\infty}\dsum_{m\ge  0}\dsum_{0\le k\le Nm}\|\Omega_m\|_{L^1(\mathbf S^{n-1})} \|f\|_{L^p}\\
&\le C\|
\nabla b\|_{L^\infty}\dsum_{m\ge  0}\dsum_{0\le k\le Nm}2^m
\sigma(E_{m}) \|f\|_{L^p}\\
&\le C\|
\nabla b\|_{L^\infty}\dsum_{m\ge
0}m2^m \sigma(E_{m}) \|f\|_{L^p}\\
&\le C\|\Omega\|_{L\log^+\!\!L(\mathbf S^{n-1})}\|
\nabla b\|_{L^\infty}\|f\|_{L^p}.
\end{align*}
Combining with the estimates of $J_1$ and $J_2,$ we get for $1<p<\infty$
$$\begin{array}{cl}\dsum_{k\ge 0}\|S_{2,k}(\mathscr Cf)\|_{L^p}\le
C(1+\|\Omega\|_{L\log^+\!\!L(\mathbf S^{n-1})})\|
\nabla b\|_{L^\infty}\|f\|_{L^p}.
\end{array}$$
We therefore finish the proof of \eqref{short variation singular}.

{\emph{\textbf{Proof of Lemma \ref{S2k}}.}}
To deal with \eqref{S0} and \eqref{Ss1}, we borrow the fact $\|\mathfrak{a}\|_{V_2}\le\|\mathfrak{a}\|_{L^2}^{1/2}\|\mathfrak{a}'\|_{L^2}^{1/2}$, where $\mathfrak{a}'=\{\frac{d}{dt}a_t:t\in\mathbb R\}$.
It is a special case of (39) in \cite{JSW08}.  Then,
\begin{align*}
[S_{2,k}(\mathscr Cf)(x)]^2&\leq\sum_{j\in\mathbb{Z}}\bigg(\dint_{1}^{2}|[b,T_{j,t}
\Delta_{k-j}^2]f(x)|^2\frac{dt}{t}\bigg)^{1/2}\bigg(\dint_{1}^{2}|\frac{d}{dt}[b, T_{j,t}
\Delta_{k-j}^2]f(x)|^2\frac{dt}{t}\bigg)^{1/2}.
\end{align*}
By the Cauchy-Schwarz inequality, we have
\begin{align*}\nonumber
\big\|S_{2,k}(\mathscr Cf)\big\|^2_{L^2}&\le\bigg\|\bigg(\dint_{1}^{2}\sum_{j\in\mathbb{Z}}|[b,T_{j,t}
\Delta_{k-j}^2]f(x)|^2\frac{dt}{t}\bigg)^{1/2}\bigg\|_{L^2}\bigg\|\bigg(\dint_{1}^{2}\sum_{j\in\mathbb{Z}}|\frac{d}{dt}[b, T_{j,t}
\Delta_{k-j}^2]f(x)|^2\frac{dt}{t}\bigg)^{1/2}\bigg\|_{L^2}\\&:=
\|I_{1,k}f\|_{L^2}\cdot\|I_{2,k}f\|_{L^2}.
\end{align*}
We estimate $\|I_{1,k}f\|_{L^2}$ and $\|I_{2,k}f\|_{L^2}$, respectively.
To estimate $\|I_{1,k}f\|_{L^2}$, we need the following
estimates
\begin{align}\label{nu1}
|\widehat{\nu_{j,t}}(\xi)|\leq C2^{-j}\|\Omega\|_{L^\infty(\mathbf S^{n-1})}|2^j\xi|^{-\gamma},\,\,\,\gamma\in(0,1)
       \end{align} and
for any fixed multi-index $\eta$ with $|\eta|\le 2,$\begin{align}\label{nu2}
|\partial^\eta\widehat{\nu_{j,t}}(\xi)|&\le C2^{j(|\eta|-1)}\|\Omega\|_{L^1(\mathbf S^{n-1})}
\end{align} uniformly in $t\in[1,2)$.
If $\Omega$ satisfies \eqref{mean zero}, then \begin{align}\label{nu3}
|\widehat{\nu_{j,t}}(\xi)|\leq C2^{-j}\|\Omega\|_{L^1(\mathbf S^{n-1})}
       |2^j\xi|^2
\end{align} and for any fixed multi-index $\eta$ with $|\eta|\le 2,$
 \begin{align}\label{nu4}
|\partial^\eta\widehat{\nu_{j,t}}(\xi)|&\le C2^{j(|\eta|-1)}\|\Omega\|_{L^1(\mathbf S^{n-1})}|2^j\xi|^{2-|\eta|}
\end{align}
uniformly in $t\in[1,2)$.
Set $M_{j,t}(\xi)=\widehat{\nu_{j,t}}(\xi),$ $M_{j,t}^k(\xi)=M_{j,t}(\xi)\varphi(2^{j-k}\xi)$. We define multipliers $T_{j,t}^k$ and $\widetilde{T}_{j,t}^k$, respectively by
$$\widehat{T_{j,t}
^kf}(\xi)=M_{j,t}^k(\xi)\widehat{f}(\xi)\ \ \text{and}\ \ \widehat{\widetilde{T}_{j,t}^k} f(\xi)= M_{j,t}^k(2^{-j}\xi)\widehat{f}(\xi).$$
We use \eqref{nu1}-\eqref{nu2} to get for $k\ge 0,$
\begin{align}\label{Mj1}\|M_{j,t}^k(2^{-j}\cdot)\|_{L^\infty}\le C2^{-j}2^{-\gamma k}\|\Omega\|_{L^\infty(\mathbf S^{n-1})},\,\,\,\,\|\partial^\alpha[M_{j,t}^k(2^{-j}\cdot)]\|_{L^\infty}\le C2^{-j}\|\Omega\|_{L^1(\mathbf S^{n-1})},\end{align}  where   $\alpha$ is  a  multi-index with $|\alpha|= 2.$
 Using \eqref{nu3}-\eqref{nu4}, we get for $k< 0,$
\begin{align}\label{Mj2}\|M_{j,t}^k(2^{-j}\cdot)\|_{L^\infty}\le C2^{-j} 2^{2k}\|\Omega\|_{L^1(\mathbf S^{n-1})},\,\,\,\,\,\|\partial^\alpha[M_{j,t}^k(2^{-j}\cdot)]\|_{L^\infty}\le C2^{-j}\|\Omega\|_{L^1(\mathbf S^{n-1})},\end{align} where   $\alpha$ is  a  multi-index with $|\alpha|= 2.$
Via Lemma \ref{CD1} to \eqref{Mj1} and \eqref{Mj2} with $\delta=2^k,$ respectively states  that for any fixed $0<v<1,$
$$\|[b, \widetilde{T}_{j,t}^k] f\|_{L^2}\le
C2^{-j}\min\{2^{-\gamma v k}\|\Omega\|_{L^\infty(\mathbf S^{n-1})}, 2^{2vk}\|\Omega\|_{L^1(\mathbf S^{n-1})}\}\|
 b\|_{Lip} \|f\|_{L^2}.$$
Then by $\|b(2^j\cdot)\|_{Lip}=2^j\|b\|_{Lip}$ says that
\begin{align}\label{bt}
\|[b,T_{j,t}^k
] f\|_{L^2}\le
C\min\{2^{-\gamma v k}\|\Omega\|_{L^\infty(\mathbf S^{n-1})}, 2^{2vk}\|\Omega\|_{L^1(\mathbf S^{n-1})}\}\|
 b\|_{Lip} \|f\|_{L^2}.
\end{align}
 By Plancherel theorem, we also get
\begin{align}\label{btp}
\|T_{j,t}^k
 f\|_{L^2}\le
C2^{-j}\min\{2^{-\gamma  k}\|\Omega\|_{L^\infty(\mathbf S^{n-1})}, 2^{2k}\|\Omega\|_{L^1(\mathbf S^{n-1})}\}\|f\|_{L^2}.
\end{align}
 Write
$$
[b,T_{j,t}\Delta_{k-j}^2 ]f=[b,T_{j,t}^k\Delta_{k-j} ]f=
[b, T_{j,t}^k ]\Delta_{k-j}f+T_{j,t}^k [b, \Delta_{k-j}]f.$$
Then we get
\begin{align*}
\|I_{1,k}f\|_{L^2}\le \bigg(\dint_{1}^{2}\sum_{j\in\mathbb{Z}}\|[b,T_{j,t}^k
]\Delta_{k-j}f\|_{L^2}^2\frac{dt}{t}\bigg)^{1/2}+ \bigg(\dint_{1}^{2}\sum_{j\in\mathbb{Z}}\|T_{j,t}^k
[b,\Delta_{k-j}]f\|_{L^2}^2\frac{dt}{t}\bigg)^{1/2}.
\end{align*}
By  \eqref{bt} and Littlewood-Paley theory, we get
\begin{align}\label{K1}
& \bigg(\dint_{1}^{2}\sum_{j\in\mathbb{Z}}\|[b,T_{j,t}^k
]\Delta_{k-j}f\|_{L^2}^2\frac{dt}{t}\bigg)^{1/2}\\&\le \nonumber  C\min\{2^{-\gamma v k}\|\Omega\|_{L^\infty(\mathbf S^{n-1})}, 2^{2vk}\|\Omega\|_{L^1(\mathbf S^{n-1})}\}\|
\nabla b\|_{L^\infty} \bigg(\dint_{1}^{2}\sum_{j\in\mathbb{Z}}\|\Delta_{k-j}f\|_{L^2}^2\frac{dt}{t}\bigg)^{1/2}\\&\le \nonumber C\min\{2^{-\gamma v k}\|\Omega\|_{L^\infty(\mathbf S^{n-1})}, 2^{2vk}\|\Omega\|_{L^1(\mathbf S^{n-1})}\} \|
\nabla b\|_{L^\infty}\|f\|_{L^2}.
\end{align}
By \eqref{btp} and Lemma \ref{Deltacom}, we get
\begin{align}\label{K2}
 &\bigg(\dint_{1}^{2}\sum_{j\in\mathbb{Z}}\|T_{j,t}^k
[b,\Delta_{k-j}]f\|_{L^2}^2\frac{dt}{t}\bigg)^{1/2}\\&\le \nonumber  C\min\{2^{-(\gamma+1) k}\|\Omega\|_{L^\infty(\mathbf S^{n-1})}, 2^{k}\|\Omega\|_{L^1(\mathbf S^{n-1})}\} \bigg(\dint_{1}^{2}\sum_{j\in\mathbb{Z}}\|2^{k-j}[b,\Delta_{k-j}]f\|_{L^2}^2\frac{dt}{t}\bigg)^{1/2}\\&\le \nonumber C\min\{2^{-(\gamma+1) k}\|\Omega\|_{L^\infty(\mathbf S^{n-1})}, 2^{k}\|\Omega\|_{L^1(\mathbf S^{n-1})}\}\|
\nabla b\|_{L^\infty}\|f\|_{L^2}.\end{align}
 Combining the estimates of \eqref{K1} and \eqref{K2}, we get
\begin{align}\label{I1k}
\|I_{1,k}f\|_{L^2}\le C\min\{2^{-\gamma v k}\|\Omega\|_{L^\infty(\mathbf S^{n-1})}, 2^{vk}\|\Omega\|_{L^1(\mathbf S^{n-1})}\}\|
\nabla b\|_{L^\infty}\|f\|_{L^2}.
\end{align}
Next, we estimate $\|I_{2,k}f\|_{L^2}$. We write
\begin{align*}
&\dfrac{d}{dt}[b,T_{j,t}\Delta_{k-j}^2 ]f=
\dfrac{d}{dt}T_{j,t} [b, \Delta_{k-j}^2]f+[b, \dfrac{d}{dt}T_{j,t} ]\Delta_{k-j}^2f.
\end{align*}
Then we get
\begin{align*}
\|I_{2,k}f\|_{L^2}&\le\bigg(\dint_{1}^{2}\bigg\|\bigg(\dsum_{j\in \Bbb Z}| \dfrac{d}{dt}T_{j,t}
[b,\Delta_{k-j}^2]f|^2\bigg)^{1/2}\bigg\|_{L^2}^2\frac{dt}{t}\bigg)^{1/2}\\&+\bigg(\dint_{1}^{2}\bigg\|\bigg(\dsum_{j\in \Bbb Z}| [b,\dfrac{d}{dt}T_{j,t}]
\Delta_{k-j}^2f|^2\bigg)^{1/2}\bigg\|_{L^2}^2\frac{dt}{t}\bigg)^{1/2}\\&:=L_1+L_2.
\end{align*}
To estimate $L_i$
for $i=1,2$, respectively,
we need the following elementary fact
\begin{align}\label{Tjt}
\frac{d}{dt}T_{j,t}h(x)&=\dfrac{d}{dt}\bigg[\int_{2^jt<|y|\le 2^{j+1}}\dfrac{\Omega(y')}{|y|^{n+1}}h(x-y)dy\bigg]\\
&\nonumber=\dfrac{d}{dt}\bigg[\dint_{\mathbf S^{n-1}}\Omega(y')\dint_{2^jt}^{2^{j+1}}\frac{1}{r^2}h(x-ry')drd\sigma(y')\bigg]\\
&\nonumber=-\dfrac{1}{2^jt^2}\dint_{\mathbf S^{n-1}}\Omega(y')h(x-2^jty')d\sigma(y')\\
&\le\nonumber\dsum_{i=1}^n\dint_0^1\dint_{S^{n-1}}|\Omega(y')|
|\partial_i h(x+s2^jty')|t^{-1} d\sigma(y')\,ds.
\end{align}
For $t\in [1,2)$, it is easy to get
\begin{align}\label{Tjt1}
\|\frac{d}{dt}T_{j,t}h\|_{L^2}\le C\|\Omega\|_{L^1(\mathbf S^{n-1})}\|\nabla h\|_{L^2}.
\end{align}
On the other hand, we get
$$\begin{array}{cl}
[b,\frac{d}{dt}T_{j,t}]h(x)
&=-\dfrac{1}{2^jt^2}\dint_{\mathbf S^{n-1}}\Omega(y')(b(x)-b(x-2^jty'))h(x-2^jty')d\sigma(y')\\
&\le\|\nabla b\|_{L^\infty}\dint_{\mathbf S^{n-1}}|\Omega(y')|
|h(x-2^jty')|t^{-1} d\sigma(y').
\end{array}$$For $t\in [1,2)$, it is easy to get
\begin{align}\label{Tjt2}
\|[b,\frac{d}{dt}T_{j,t}]h\|_{L^2}\le C\|\Omega\|_{L^1(\mathbf S^{n-1})}\|\nabla b\|_{L^\infty}\| h\|_{L^2}.
\end{align}
We now estimate $L_1$.
Indeed, by  \eqref{Tjt1} and \eqref{Lp2},  we
have
\begin{align*}
L_1&\le C\|\Omega\|_{L^1(\mathbf S^{n-1})}\bigg(\dint_{1}^{2} \dsum_{j\in
\Bbb Z}\big\|\nabla[b,
\Delta_{k-j}^2]f
\big\|_{L^2}^2\frac{dt}{t}\bigg)^{1/2}\\
&\le C\|\Omega\|_{L^1(\mathbf S^{n-1})}\|\nabla b\|_{L^\infty}\|f\|_{L^2}.
\end{align*}
Similarly, by  \eqref{Tjt2} and  Littlewood-Paley theory, we
have
\begin{align*}
L_2&\le C\|\Omega\|_{L^1(\mathbf S^{n-1})}\|\nabla b\|_{L^\infty}\bigg(\dint_{1}^{2} \dsum_{j\in
\Bbb Z}\big\|
\Delta_{k-j}^2f
\big\|_{L^2}^2\frac{dt}{t}\bigg)^{1/2}\\
&\le C\|\Omega\|_{L^1(\mathbf S^{n-1})}\|\nabla b\|_{L^\infty}\|f\|_{L^2}.
\end{align*}
Combining the estimates of $L_1$ and $L_2$, we get
\begin{align*}
\|I_{2,k}f\|_{L^2}\le  C\|\Omega\|_{L^1(\mathbf S^{n-1})}\|\nabla b\|_{L^\infty}\|f\|_{{L}^2}.
\end{align*}
Then combined with the  estimate of \eqref{I1k}, we get for  $k\in \Bbb Z,$
\begin{align*}
\big\|S_{2,k}(\mathscr Cf)\big\|^2_{L^2}&\le C\min\{ 2^{-\gamma vk}\|\Omega\|_{L^\infty(\mathbf S^{n-1})}^2, 2^{vk}\|\Omega\|_{L^1(\mathbf S^{n-1})}^2\}\|
\nabla b\|_{L^\infty}^2 \|f\|_{L^2}^2.
\end{align*} This finishes the proof of \eqref{S0} and \eqref{Ss1}.

So to prove Lemma 7.1, it suffices to prove \eqref{S2}. Let $$
B=\Big\{\{a_{j,t}\}_{j\in \Bbb Z,\, t\in [1,2)}:\|{a_{j,t}}\|_B:=\big(\sum_{j\in \Bbb Z}\|{a_{j,t}}\|^2_{V_2}\big)^{1/2}<\infty\Big\}.
$$
Clearly, $(B,\|\cdot\|_B)$ is a Banach space.Then,
\begin{align*}
S_{2,k}(\mathscr Cf)(x)&=\Big(\dsum_{j\in\mathbb{Z}}\dsup_{\substack
{t_1<\cdots<t_N\\
[t_l,t_{l+1}]\subset[1,2)}}\dsum_{l=1}^{N-1}\big|[b,T_{j,t_l}\Delta_{k-j}^2]
f(x)-[b,T_{j,t_{l+1}}\Delta_{k-j}^2]
f(x)|^2\Big)^{\frac{1}{2}}\\
&=\Big(\dsum_{j\in\mathbb{Z}}\dsup_{\substack
{t_1<\cdots<t_N\\
[t_l,t_{l+1}]\subset[1,2)}}\dsum_{l=1}^{N-1}\big|[b,T_{j,t_l,t_{l+1}}\Delta_{k-j}^2]
f(x)|^2\Big)^{\frac{1}{2}},
\end{align*} where $$T_{j,t_l,t_{l+1}}f(x)=\dint_{2^jt_l<|y|\le 2^jt_{l+1}}f(x-y)\frac{\Omega(y)}{|y|^{n+1}}dy\,\,\, \hbox{and}\,\,\,\,[t_l,t_{l+1}]\subset[1,2).$$
Then by $$[b,T_{j,t_l,t_{l+1}}\Delta_{k-j}^2]f=[b,T_{j,t_l,t_{l+1}}]\Delta_{k-j}^2
f+T_{j,t_l,t_{l+1}}[b,\Delta_{k-j}^2]
f,$$ we get
\begin{align*}
S_{2,k}(\mathscr Cf)(x)&\le \Big(\dsum_{j\in\mathbb{Z}}\dsup_{\substack
{t_1<\cdots<t_N\\
[t_l,t_{l+1}]\subset[1,2)}}\dsum_{l=1}^{N-1}\big|[b,T_{j,t_l,t_{l+1}}]\Delta_{k-j}^2
f(x)|^2\Big)^{\frac{1}{2}}\\&+\Big(\dsum_{j\in\mathbb{Z}}\dsup_{\substack
{t_1<\cdots<t_N\\
[t_l,t_{l+1}]\subset[1,2)}}\dsum_{l=1}^{N-1}\big|T_{j,t_l,t_{l+1}}[b,\Delta_{k-j}^2]
f(x)|^2\Big)^{\frac{1}{2}}.
\end{align*}
By the mean value zero property of $\Omega$,  we have
$$\begin{array}{cl}T_{j,t_l,t_{l+1}}f(x)&=\dint_{2^jt_l}^{ 2^jt_{l+1}}\dint_{S^{n-1}}f(x-ry')\Omega(y')d\sigma(y')\frac{dr}{r^2}\\&=\dint_{2^jt_l}^{ 2^jt_{l+1}}\dint_{S^{n-1}}\Omega(y')
\Big(f(x-ry')-f(x)\Big)d\sigma(y')\frac{dr}{r^2}\\&=\dsum_{|\beta|=1}\dint_{2^jt_l}^{ 2^jt_{l+1}}\dint_0^1\dint_{S^{n-1}}\Omega(y')
D^\beta f(x+sry')(ry')^\beta d\sigma(y')\,ds\frac{dr}{r^2}.\end{array}$$
For $[t_l,t_{l+1}]\subset[1,2)$, let $${\widetilde{T}}_{j,t_l,t_{l+1}}f(x)=\dint_{2^jt_l}^{ 2^jt_{l+1}}\dint_0^1\dint_{S^{n-1}}|\Omega(y')|
|\nabla f(x+sry')| d\sigma(y')\,ds\frac{dr}{r}$$ and $${T}^*_{j,t_l,t_{l+1}}f(x)=\dint_{2^jt_l<|y|\le 2^jt_{l+1}}|f(x-y)|\frac{|\Omega(y)|}{|y|^n}dy.$$
Then,
\begin{align*}
S_{2,k}(\mathscr Cf)(x)&\le C\|\nabla b\|_{L^\infty}\Big(\dsum_{j\in\mathbb{Z}}\dsup_{\substack
{t_1<\cdots<t_N\\
[t_l,t_{l+1}]\subset[1,2)}}\dsum_{l=1}^{N-1}\big|{{T}}^*_{j,t_l,t_{l+1}}\Delta_{k-j}^2
f(x)|^2\Big)^{\frac{1}{2}}\\&+C\Big(\dsum_{j\in\mathbb{Z}}\dsup_{\substack
{t_1<\cdots<t_N\\
[t_l,t_{l+1}]\subset[1,2)}}\dsum_{l=1}^{N-1}\big|\widetilde{T}_{j,t_l,t_{l+1}}[b,\Delta_{k-j}^2]
f(x)|^2\Big)^{\frac{1}{2}}\\&=C\|\nabla b\|_{L^\infty} \Big(\dsum_{j\in\mathbb{Z}}\dsup_{\substack
{t_1<t_N\\
[t_1,t_{N}]\subset[1,2)}}\big|{T}^*_{j,t_1,t_{N}}(\Delta_{k-j}^2
f)(x)|^2\Big)^{\frac{1}{2}}\\&+C\Big(\dsum_{j\in\mathbb{Z}}\dsup_{\substack
{t_1<t_N\\
[t_l,t_{N}]\subset[1,2)}}\big|\widetilde{T}_{j,t_1,t_{N}}([b,\Delta_{k-j}^2]
f)(x)|^2\Big)^{\frac{1}{2}}.
\end{align*}
 Therefore, we get
 $$
 S_{2,k}(\mathscr Cf)(x)
 \le C\|\nabla b\|_{L^\infty}\Big(\dsum_{j\in\mathbb{Z}}\big|M_{\Omega}(\Delta_{k-j}^2
f)(x)|^2\Big)^{\frac{1}{2}}+C\Big(\dsum_{j\in\mathbb{Z}}\big|M_{\Omega}(\nabla[b,\Delta_{k-j}^2]
f)(x)|^2\Big)^{\frac{1}{2}}.$$
To proceeding with the estimates, we need the following inequality, for $1<p<\infty$
$$
\bigg\|
 \bigg(\dsum_{j\in \Bbb Z}
 |M_{\Omega} f_j|^2\bigg)^{1/2}\bigg\|_{L^p}\le C\|\Omega\|_{L^1(\mathbf{S}^{n-1})}\bigg\|
 \bigg(\dsum_{j\in \Bbb Z}
 | f_j|^2\bigg)^{1/2}\bigg\|_{L^p},
$$
which was established in  \cite{CD}.
Then by  Littlewood-Paley theory  and \eqref{Lp2}, we
have for $1<p<\infty$
\begin{align*}
&\|S_{2,k}(\mathscr Cf)\|_{L^p}\\
\le& C\|\nabla b\|_{L^\infty}\|\Omega\|_{L^1(\mathbf{S}^{n-1})}\bigg\|\Big(\dsum_{j\in\mathbb{Z}}|\Delta_{k-j}^2
f|^2\Big)^{\frac{1}{2}}\bigg\|_{L^p}+C\|\Omega\|_{L^1(\mathbf{S}^{n-1})}\bigg\|\Big(\dsum_{j\in\mathbb{Z}}\big|\nabla[b,\Delta_{k-j}^2]
f|^2\Big)^{\frac{1}{2}}\bigg\|_{L^p}\\
\le& C\|\nabla b\|_{L^\infty}\|\Omega\|_{L^1(\mathbf{S}^{n-1})}\|f\|_{L^p}.\end{align*}
This gives \eqref{S2}. Therefore, we complete the proof Lemma \ref{S2k}. \qed

\section{Proof of  Corollary \ref{thm:H2}}  For $\varepsilon>0,$ write
$$\begin{array}{cl}
[b,T_\varepsilon]\nabla f(x)=-T_\varepsilon[b, \nabla]f(x)+[b, \nabla T_\varepsilon]f(x).\end{array}$$
 For the first
term, since $[b, \nabla]f=-(\nabla b)f,$ then by Theorem 1.2 in \cite{DHL},  for $1<p<\infty,$
we have \begin{align}\label{T}\|\lambda\sqrt{N_\lambda(\{T_\varepsilon[b, \nabla]f\}_{\varepsilon>0})}\|_{L^p}&\le C\|K\|_{L(\log^+L)^2(\mathbf S^{n-1})}\|(\nabla b) f\|_{L^p}\\&\le \nonumber C\|K\|_{L(\log^+L)^2(\mathbf S^{n-1})}\|\nabla b\|_{L^\infty}\|f\|_{L^p}.\end{align}
For the second term,  it is easy to verify that  $\nabla K(x)$ is homogeneous of degree $-n-1$ and \begin{align*}(x_k\nabla K(x))^{\wedge}(\xi)&=i\xi_k\widehat{\nabla K}(\xi)=i\dfrac{\partial}{\partial \xi_k}(i\xi_1\widehat{K}(\xi),\dots,i\xi_n\widehat{K}(\xi)).\end{align*}
Moreover, if $j=k,$ then  $
\dfrac{\partial}{\partial \xi_k}(\xi_j\widehat{K})(\xi)=\widehat{ K}(\xi)+\xi_j\dfrac{\partial\widehat{K}(\xi)}{\partial \xi_k}.$ If $j\neq k,$ then   $
\dfrac{\partial}{\partial \xi_k}(\xi_j\widehat{K})(\xi)=\xi_j\dfrac{\partial\widehat{K}(\xi)}{\partial \xi_k}.$
So we get for $k=1,\dots,n,$  $(x_k\nabla K(x))^{\wedge}(0)=0.$
Additionally, $\widehat{ \nabla K}(\xi)=i \xi\widehat{ K}(\xi)$, then $\widehat{\nabla K}(0)=0.$ This says that $$\int_{\mathbf S^{n-1}}(x_k')^N\nabla K(x')\,d\sigma(x')=0$$ for any $k=1,\dots,n$ and $N=0,1.$
Since $|\nabla K(x')|\in L(\log^+L)^2(\mathbf S^{n-1}),$ then by  Theorem \ref{thm:H},  the family of the  operators \begin{align*}\{[b, \nabla T_\varepsilon] f(x)\}_{\varepsilon>0}= \bigg\{\dint_{|x-y|>\varepsilon}\nabla K(x-y)(b(x)-b(y) f(y)\,dy\bigg\}_{\varepsilon>0}\end{align*}  satisfies  \begin{align}\label{C}\|\lambda\sqrt{N_\lambda(\{[b, \nabla T_\varepsilon]f\}_{\varepsilon>0} )}\|_{L^p}\le
C\|\nabla K\|_{L(\log^+L)^2(\mathbf S^{n-1})}\|\nabla b\|_{L^\infty}\|f\|_{L^p},\ 1<p<\infty.\end{align}
Combining the estimates of \eqref{T} and \eqref{C}, we get
 \begin{align}\nonumber\|\lambda\sqrt{N_\lambda(\mathcal{T}_b \nabla f )}\|_{L^p}&\le
C(\|K\|_{L(\log^+L)^2(\mathbf S^{n-1})}+\|\nabla K\|_{L(\log^+L)^2(\mathbf S^{n-1})})\|\nabla b\|_{L^\infty}\|f\|_{L^p},\ 1<p<\infty.\end{align}
 For $\{\nabla[b,T_\varepsilon] f\}_{\varepsilon>0},$ we have
$$\begin{array}{cl}
\nabla[b,T_\varepsilon] f(x)&=-[b, \nabla]T_\varepsilon f(x)+[b, \nabla T_\varepsilon]f(x)=-(\nabla b)(x)T_\varepsilon f(x)+[b, \nabla T_\varepsilon]f(x).\end{array}$$
Similarly to the proof of Theorem 1.2 in \cite{DHL}, we can get for $1<p<\infty,$
\begin{align}\|\lambda\sqrt{N_\lambda(\{(\nabla b)T_\varepsilon f\}_{\varepsilon>0})}\|_{L^p}&\le \nonumber C\|K\|_{L(\log^+L)^2(\mathbf S^{n-1})}\|\nabla b\|_{L^\infty}\|f\|_{L^p}.\end{align} Then by \eqref{C}, we get for $1<p<\infty,$
\begin{align}\nonumber\|\lambda\sqrt{N_\lambda( \nabla \mathcal{T}_bf )}\|_{L^p}&\le
C(\|K\|_{L(\log^+L)^2(\mathbf S^{n-1})}+\|\nabla K\|_{L(\log^+L)^2(\mathbf S^{n-1})})\|\nabla b\|_{L^\infty}\|f\|_{L^p}.\end{align} \qed\bibliographystyle{amsplain}

\begin{thebibliography}{10}

\bibitem{AJS98} M. Akcoglu, R. L. Jones and P. Schwartz, \emph{Variation in probability, ergodic theory and analysis,} Illinois. J. Math. \textbf{42} (1998), 154-177.

\bibitem{AT} P. Auscher and M. Taylor,  \emph{Paradifferential operators and commutator estimates: Paradifferential operators,} Com. PDE. \textbf{20} (1995), 1743-1775.



\bibitem{B} J. M. Bony, \emph{Calcul symbolique et propagation des
singularites pour les equations aux derivees partielles non
lineaires,} Ann. Sci. Ecole Norm. Sup. (4), \textbf{14} (1981),
209-246.\label{B}





\bibitem{Bou89} J. Bourgain, \emph{Pointwise ergodic theorems for arithmetic sets,} Inst. Hautes \'{E}tudes SCI, Publ. Math. \textbf{69} (1989), 5-41.



\bibitem{CJRW2000} J. Campbell, R. Jones, K. Reinhold and M. Wierdl,
\emph{Oscillation and variation for the Hilbert transform,} Duke
Math. J. \textbf{105} (2000), 59-83.

\bibitem{CJRW2002} J. Campbell, R. Jones, K. Reinhold and M. Wierdl,
\emph{Oscillation and variation for singular integrals in higher
dimensions,} Trans. Amer. Math. Soc. \textbf{355} (2002), 2115-2137.



\bibitem{C3} A. P. Calder\'on, \emph{Uniqueness in the Cauchy problem for partial differential equations}, Amer. J. Math. {\textbf{80}}  (1958), 16-36.

\bibitem{C1} A. P. Calder\'on,
\emph{Commutators of singular integrals,} Proc. Nat. Acad. Sci.
USA. \textbf{53} (1965), 1092-1099. \label{C1}




\bibitem{C2} A. P. Calder\'on,
\emph{Commutators, singular integrals on Lipschitz curves and application,} Proc. Inter. Con. Math.
Helsinki, 1978, 86-95. \label{C2}

\bibitem {CFY} J. Chen, D. Fan and Y. M. Ying, \emph{Certain operators with rough
singular kernels}, Canadian J. Math., \textbf{55} (2003),
504-532.

\bibitem {CD} Y. Chen and Y. Ding, \emph{Rough singular integrals on Triebel-Lizorkin space and Besov space}, J. Math. Anal. Appl. \textbf{347} (2008) 493-501.

\bibitem {CD1} Y. Chen and Y. Ding, \emph{Necessary and sufficient conditions for the bounds of the Calder\'{o}n type commutator for the
Littlewood-Paley operator,} Nonl. Anal. TMA.\textbf{130}(2016), 279-297.

\bibitem{CDH1}Y. Chen, Y. Ding and G. Hong, \emph{Commutators with fractional differentiation and new
characterizations of BMO-Sobolev spaces,} Analysis and PDE. To appear.


\bibitem{ChJ} M. Christ and J.-L. Journ\'{e}, \emph{Polynomial growth estimates for multilinear singular integral operators,}
Acta Math. \textbf{159} (1987),  51-80.




\bibitem{Co} J. Cohen,\emph{ A sharp estimate for a multilinear singular integral in  $\mathbb{ R}^n,$} Indiana Univ. Math. J. \textbf{30 }(1981), 693-702.\label{Co}




\bibitem{CM1} R. Coifman and Y. Meyer, \emph{On commutators of singular integrals and bilinear integrals}, Trans. Amer. Math.
Soc. \textbf{212} (1975), 315-331.\label{CM1}

\bibitem{CM} R. Coifman and Y. Meyer, \emph{Au dela des operateurs pseudo-differentiels,} Asterisque, no.57, Soc. Math. de France, 1978.\label{CM}


\bibitem{DHL} Y. Ding, G. Hong and H. Liu,
\textit{Jump and variational inequalities for rough operators}, J. Four. Anal.  Appl. (2016), DOI 10.1007/s00041-016-9484-8.



\bibitem{DMT12} Y. Do, C. Muscalu and C. Thiele, \emph{Variational estimates for paraproducts}, Rev. Mat. Iberoam. \textbf{28} (2012), 857-878.

\bibitem{DR86}
J. Duoandikoetxea and J. Rubio de Francia, \emph{Maximal and
singular integral operators via Fourier transform estimates,}
Invent. Math. \textbf{84} (1986), 541-561.


\bibitem{DR1} J. Duoandikoetxea, {\it Fourier analysis,} Graduate Studies in Math,
AMS Providence, Rhode Island.  \textbf{29} (2001), 106.\label{DR1}

\bibitem{Fe} C. Fefferman,
\emph{Recent Progress in Classical Fourier Analysis,} Proc. Inter. Con. Math.
Vancouver, 	1974, 95-118. \label{Fe}

\bibitem{Fe1} C. Fefferman and D. Phong, \emph{On positivity of pseudodifferential operators,}
Proc. Nat. Acad. Sci.
USA. \textbf{75} (1978), 4673-4674.

\bibitem{FS} C. Fefferman and E. M. Stein, \emph{Some maximal inequalities}, Amer. J. Math.
\textbf{93} (1971), 107-115.






\bibitem{GiTo04} T. Gillespie and J. Torrea, \emph{Dimension free estimates for the oscillation of Riesz transforms,} Israel J. Math. \textbf{141} (2004), 125-144.

\bibitem{GR} L. Grafakos, \emph{Classic Fourier Analysis}, Graduate Texts in Mathematics,  Springer, New York. \textbf{249} (2008).


\bibitem{GR1} L. Grafakos, \emph{Modern Fourier Analysis}, Graduate Texts in Mathematics, Springer, New York. \textbf{250} (2008).


\bibitem{GH}  L. Grafakos and P. Honz\'{i}k, \emph{A weak-type estimate for commutators}, Int. Math. Res. Not.
\textbf{2012}(2012), 4785-4796.





\bibitem{SH} S. Hofmann, \textit{ Weighted inequalities for commutators of rough  singular integrals,
}   Indian U. Math. J. \textbf{ 39} (1990), 1275-1304.\label{SH}





\bibitem{SH1} S. Hofmann, \textit{ Parabolic singular integrals of Calder\'{o}n-type, rough operators, and caloric layer potentials,}  Duke Math. J. \textbf{ 90} (1997), 209-259.\label{SH1}



\bibitem{Ho} L. H\"{o}rmander, \emph{Pseudo-differential operators of type $(1,1)$,}
Comm. PDE. \textbf{13} (1988), 1085-1111.

\bibitem{Hon1} G. Hong, \emph{The behaviour of square functions from ergodic theory in $L^\infty$}, Proc. Amer. Math. Soc. \textbf{143} (2015), 4797-4802.


\bibitem{HLP} T. Hyt\"{o}nen, M. Lacey and I. Parissis, \emph{A variation norm Carleson theorem for vector-valued Walsh-Fourier series,} Rev. Mat. Iberoam. \textbf{30} (2014), 979-1014.

\bibitem{JKRW98} R. Jones, R. Kaufman, J. Rosenblatt and M. Wierdl, \emph{Oscillation in ergodic theory,} Ergodic Theory Dynam. Systems. \textbf{18} (1998), 889-935.

\bibitem{JRW00} R. Jones, J. Rosenblatt and M. Wierdl, \emph{Oscillation inequalities for rectangles,} Proc. Amer. Math. Soc. \textbf{129} (2000), 1349-1358.

\bibitem{JRW03} R. Jones, J. Rosenblatt and M. Wierdl, \emph{Oscillation in ergodic theory: Higher dimensional results}, Israel J. Math. \textbf{135} (2003), 1-27.

\bibitem{JSW08} R. Jones, A. Seeger and J. Wright, \emph{Strong variational and jump inequalities in harmonic analysis,} Trans. Amer. Math. Soc. \textbf{360} (2008), 6711-6742.

\bibitem{Kra14} B. Krause, \emph{Polynomial Ergodic Averages converge rapidly: variations on a theorem of Bourgain},  arXiv: 1402.1803.

\bibitem{KZ} B. Krause and P. Zorin-Kranich, \emph{Weighted and vector-valued variational estimates for ergodic averages}, arXiv: 1409.7120.



\bibitem{LeXu2} C. Le Merdy and Q. Xu, \emph{Strong q-variation inequalities for analytic semigroups}, Ann. Inst. Fourier. \textbf{62} (2012), 2069-2097.

\bibitem{Lep76} D. L\'epingle, \emph{La variation d'ordre $p$ des semi-martingales},  Z. Wahrsch. Verw. Gebiete. \textbf{36} (1976), 295-316.

\bibitem{MTX} T. Ma, J. Torrea and Q. Xu, \emph{Weighted variation inequalities for differential operators and singular integrals},  J. Funct. Anal. \textbf{268} (2015), 376-416.

\bibitem{MaTo} A. Mas and X. Tolsa, \emph{Variation for the Riesz transform and uniform rectifiability,} J. Eur. Math. Soc. \textbf{16} (2014), 2267-321.




\bibitem {Me} Y. Meyer, \emph{Ondelettes et Op\'erateurs}, Vol. II, Hermann, Paris, 1990. \label{Me}
	
\bibitem {MC} Y. Meyer and R. Coifman, \emph{Ondelettes et Op\'erateurs,} Vol. III, Hermann, Paris, 1991.\label{MC} 	

\bibitem{MiTr14} M. Mirek, B. Trojan, \emph{Discrete maximal functions in higher dimensions and applications to ergodic theory},  arXiv: 1405.5566.

\bibitem{MST15} M. Mirek, E. M. Stein and B. Trojan, \emph{$\ell_p(Z^d)$-estimates for discrete operators of Radon type: Variational estimates}, arXiv: 1512.07523.

\bibitem{MTZ14} M. Mirek, B. Trojan and  P. Zorin-Kranich, \emph{Variational estimates for averages and truncated singular integrals along the prime numbers}, arXiv: 1410.3255.

\bibitem{MMT} I. Mitrea, M. Mitrea and  M. Taylor, \emph{Cauchy integrals, Calder\'on projectors, and Toeplitz operators on uniformly rectifiable domains}, Adv.  Math. \textbf{268} (2015), 666-757.


\bibitem{MuWh71} B. Muckenhoupt, R. L. Wheeden, \emph{Weighted norm inequalities for singular and fractional integrals}, Tran. Amer. Math. Soc. \textbf{161} (1971), 249-258.

\bibitem{Mu} T. Murai, \emph{Boundedness of singular integral operators of Calder\'{o}n-type}, Adv.   Math.
\textbf{59} (1986), 71-81. \label{Mu}

\bibitem{OSTTW12} R. Oberlin, A. Seeger, T. Tao, C. Thiele and J. Wright,  \emph{A variation norm Carleson theorem,} J. Eur. Math. Soc. \textbf{14} (2012), 421-464.

\bibitem{PiXu88} G. Pisier and Q. Xu, \emph{The strong $p$-variation of martingales and orthogonal series, } Probab. Theory Related Fields. \textbf{77} (1988), 497-514.

\bibitem{Qia98} J. Qian, \emph{The $p$-variation of partial sum processes and the empirical process, } Ann. Prob. \textbf{77} (1998), 1370-1383.



\bibitem{S1993} E. M. Stein, \emph{Harmonic Analysis, Real Variable Methods, Orthogonality, and Oscillatory 	
Integrals}, Princeton Univ. Press, Princeton, NJ, 1993. \label{S1993}	

\bibitem{St} E. M. Stein, \emph{Singular Integrals and Pseudo-differential Operators}, Graduate
Lecture Notes, Princeton Univ., 1972.

 \bibitem{Ta0} M. Taylor, \emph{Pseudodifferential Operators and Nonlinear PDE}, Birkhauser,
Boston, 1991.



\bibitem{Ta2} M. Taylor, \emph{Tools for PDE: pseudodifferential operators, paradifferential operators, and layer potentials,} Amer. Math. Soc. \textbf{81} (2000).

\bibitem{Ta3} M. Taylor, \emph{Commutator estimates for H\"{o}lder continuous and bmo-Sobolev
multipliers}, Proc. Amer. Math. Soc. \textbf{143} (2015), 5265-5274.

\bibitem{Y} A. Youssfi, \emph{Regularity properties of commutators and BMO-Triebel-Lizorkin spaces}, Ann. I. Fourier \textbf{45} (1995), 795-807.

\bibitem{Zor14} P. Zorin-Kranich, \emph{Variation estimates for averages along primes and polynomials}, arXiv: 1403.4085.



\end{thebibliography}

\end{document}